\newcounter{claimcounter}
\newcounter{claimproofcounter}
\newenvironment{claime}[1]{\stepcounter{claimcounter}\par\noindent\underline{Claim \theclaimcounter:}\space#1}{}
\newenvironment{claimproof}[1]{\stepcounter{claimproofcounter}\par\noindent\underline{Proof \theclaimproofcounter:}\space#1}{\hfill~$\blacksquare$}
\newcommand{\cB}{\mathcal{B}}
\newcommand{\cC}{\mathcal{C}}
\newcommand{\cF}{\mathcal{F}}
\newcommand{\cM}{\mathcal{M}}
\newcommand{\cN}{\mathcal{N}}
\newcommand{\cS}{\mathcal{S}}
\newcommand{\cmark}{\checkmark}
\newcommand{\xmark}{\text{\sffamily X}}
\newcommand{\qmark}{\text{\sffamily ?}}
\newcommand{\leoR}[1]{#1}
\newcommand{\review}[1]{#1}
\newcommand{\reviewx}[1]{#1}
\definecolor{lightgreen}{RGB}{180,255,150}
\definecolor{lightblue}{RGB}{180,180,255}
\begin{document}

\title{Reconstructing Tree-Child Networks from Reticulate-Edge-Deleted Subnetworks
}


\author{Yukihiro Murakami   \and
        Leo van Iersel      \and
        Remie Janssen       \and
        Mark Jones          \and
        Vincent Moulton
}


\institute{
            Yukihiro Murakami   \at
              Delft Institute of Applied Mathematics, Delft University of Technology, Van Mourik Broekmanweg 6, 2628 XE, Delft, The  Netherlands\\
              \email{yukimurakami07201994@gmail.com}
            \and
            Leo van Iersel      \at
              Delft Institute of Applied Mathematics, Delft University of Technology, Van Mourik Broekmanweg 6, 2628 XE, Delft, The  Netherlands\\
              \email{l.j.j.v.iersel@gmail.com}
            \and
            Remie Janssen       \at
              Delft Institute of Applied Mathematics, Delft University of Technology, Van Mourik Broekmanweg 6, 2628 XE, Delft, The  Netherlands\\
              \email{remiejanssen@gmail.com}
            \and
            Mark Jones          \at
              Delft Institute of Applied Mathematics, Delft University of Technology, Van Mourik Broekmanweg 6, 2628 XE, Delft, The  Netherlands\\
              \email{markelliotlloyd@gmail.com}
            \and
            Vincent Moulton     \at
              School of Computing Sciences, University of East Anglia, NR4 7TJ, Norwich, United Kingdom\\
              \email{V.Moulton@uea.ac.uk}
}

\date{Received: date / Accepted: date}

\maketitle

\begin{abstract}
 Network reconstruction lies at the heart of phylogenetic research.
 Two well studied classes of phylogenetic networks include tree-child networks and level-$k$ networks.
 In a tree-child network, every non-leaf node has a child that is a tree node or a leaf.
 In a level-$k$ network, the maximum number of reticulations contained in a biconnected component is~$k$.
 Here, we show that level-$k$ tree-child networks are encoded by their reticulate-edge-deleted subnetworks, which are subnetworks obtained by deleting a single reticulation edge, if~$k\geq 2$.
 Following this, we provide a polynomial-time algorithm for uniquely reconstructing such networks from their reticulate-edge-deleted subnetworks. Moreover, we show that this can even be done when considering subnetworks obtained by deleting one reticulation edge from each biconnected component with~$k$ reticulations.
\keywords{Phylogenetic network \and Network encoding \and Tree-child networks \and Reticulate-edge-deleted subnetworks}
\end{abstract}

\section{Introduction}

Phylogenetic trees are instrumental in representing the evolutionary history of a set of species~$X$.
Leaves (extant species) are bijectively labelled by~$X$, and speciation events are depicted by internal nodes (non-extant species).
Though powerful in their own right, phylogenetic trees are limited by their inability to display complex evolutionary events such as horizontal gene transfers, hybridizations, and recombinations~\cite{sneath1975cladistic}.
For such reticulate (non-treelike) events, there has been increased interest in employing \emph{phylogenetic networks} instead, which are generalizations of phylogenetic trees to directed acyclic graphs~\cite{morrison2005networks,huson2010phylogenetic}.





In recent years, heavy focus has been cast upon the reconstruction of phylogenetic networks.
Many existing methods of tree reconstruction such as maximum parsimony, maximum likelihood, and distance-based methods have been adapted to network reconstruction~\cite{hein1990reconstructing,von1993network,strimmer2000likelihood,jin2006maximum,bordewich2018recovering,huson2010phylogenetic}.
In this paper, we tackle the reconstruction problem through a building block approach.
Building blocks are generally some class of subnetworks, e.g. binets~\cite{van2017binets}, trinets~\cite{huber2013encoding}, or trees, used to infer the original network.
A potential problem here is that there could be more than one network with the same building blocks.
When considering trees as building blocks, Pardi and Scornavacca somewhat resolved this distinguishability issue by considering `canonical forms' of networks; however the problem still persists in general~\cite{pardi2015reconstructible}.
Therefore the goal in any building block approach is to see if it \emph{encodes} the network.
We say that a network is \emph{encoded} by a certain building block if given two networks containing the same set of this building block, the networks are isomorphic.

It has been shown by Huber et al. in~\cite{huber2014much} that there exist networks 
which are not encoded by all subnetworks (called \emph{subnets}) induced on proper subsets of the taxa.
This is not to say that subnets do not encode many networks; in fact, it has been shown time and time again that considering topologically restricted classes of networks can help bypass this complication~\cite{willson2011regular,van2014trinets,gambette2017challenge,van2017binets}.
Two of the more prominent network classes are the \emph{tree-child networks}~\cite{cardona2009comparison} and the \emph{level-$k$ networks}~\cite{jansson2006inferring}.
In a tree-child network, every non-leaf node has a child that is a tree node (nodes with indegree-1 and outdegree-2) or a leaf (nodes with indegree-1 and outdegree-0).
In a level-$k$ network, the maximum number of reticulations (nodes with indegree-2 and outdegree-1) in a biconnected component (blob) is~$k$ (see Figure~\ref{fig:PhylNetwork} for an example of a level-$4$ tree-child network).

\begin{figure}
 \centering
 \includegraphics[scale = 0.7]{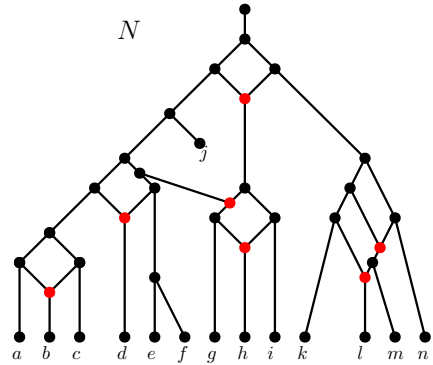}
 \caption{A level-4 tree-child network~$N$ on the set of species~$X = \{a,\dots, n\}$.
 Though~$N$ is a directed acyclic graph, the edge directions are omitted to avoid cluttering. 
 The arcs are directed downwards.
 The leaf pair~$\{e,f\}$ is a \emph{cherry} since they share a common parent.
 The leaf pair~$\{a,b\}$ is a \emph{reticulated cherry} since the parent of~$a$ is also the parent of the parent of~$b$.}
 \label{fig:PhylNetwork}
\end{figure}

In this paper, we show that binary level-$k$ tree-child networks, where~$k\geq2$ are encoded by \emph{reticulate-edge-deleted subnetworks}, which are subnetworks obtained by deleting a single reticulation edge.
In fact, we prove an even stronger result that this network class is encoded by its \emph{Maximum Lower-Level Subnetworks (MLLSs)}, the subnetworks obtained by deleting a reticulation edge from every level-$k$ biconnected component.
We do so by exploiting the fact that tree-child networks contain either a cherry or a reticulated cherry~\cite{bordewich2016determining}.
Cherries need not be reconstructed, since they stay intact in every MLLS; therefore we focus on reconstructing reticulated cherries and show that they are uniquely reconstructible through an exhaustive case study.
In proving this result, we explore `blob trees', an underlying tree of a network, introduced initially by Gusfield and Bansal~\cite{gusfield2005fundamental}.
These labelled trees are obtained from networks by collapsing every biconnected component to a single node, labelling the node by its set of leaf-descendants, and removing the leaves.
In this paper we introduce the class of \emph{valid} networks; for such a class, we show that we can reconstruct the blob tree of the original network from the blob trees of all MLLSs (Theorem~\ref{thm:BloBTreeReconstruction}).
The class of tree-child networks are contained within the class of valid networks, and therefore the result also follows for tree-child networks.

In related literature, it has been shown that tree-child networks are encoded by trinets~\cite{van2014trinets} but not by trees (see Figure~\ref{fig:SubnetReconButNotLvlRecon}).
Gambette et al.~\cite{gambette2017challenge} showed that level-1 networks (which are necessarily tree-child) with girth (shortest cycle in underlying graph) at least 5 are reconstructible from their triplets.
The triplets are phylogenetic trees on~3 leaves; as the set of triplets can be computed from the set of all displayed trees of the network, level-1 networks of girth at least 5 are encoded by trees and therefore by their MLLSs.
\reviewx{Others have also constructed level-1 networks (also called gt-networks) from trees.
Nakhleh et al. showed that it was possible to find a level-1 network with the minimum number of reticulations that displays an input of two binary trees in polynomial time if such a network exists~\cite{nakhleh2005reconstructing}.
In the same paper, they also considered the following problem, which we restate using our notation.
Given an input of two nonbinary trees, find a level-$1$ network~$N$ with one reticulation such that~$N$ displays two MLLSs that are refinements of the two input nonbinary trees, if such a network exists.
Huynh et al.~\cite{huynh2005constructing} generalized this result by showing that one can find a level-$1$ network with the minimal number of reticulations for an input size of at least two nonbinary trees, if such a network exists.
They did not however consider whether the output network was unique, which is our focus for this paper.
In particular, we focus on showing that certain networks are uniquely defined by their MLLSs and that they can be uniquely reconstructed from them; this is fundamentally different from the problem of finding a most parsimonious network for a set of trees -- which, coincidentally, is a subset of MLLSs for a level-$1$ network.
}

The paper is organized as follows.
In the next section we define essential terms relevant to this paper, including \emph{MLLSs} and the notion of \emph{encoding / reconstructibility}.
Section~\ref{sec:BlobTree} presents the definitions and the key results on blob trees.
In Section~\ref{sec:LeafPairAnalysis} we investigate the possible topologies for each leaf pair.
Per our definition, there are~5 possibilities for each leaf pair up to isomorphism, and we develop a method for reconstructing a blob containing a particular leaf pair topology.
In Section~\ref{sec:TCReconstruction}, we show our main result for this paper, that binary level-$k\geq2$ tree-child networks are reconstructible from their MLLSs (Theorem~\ref{thm:TCNetworksReconstructible}).
A polynomial-time (in the size of the leaf set and the MLLS set) algorithm for reconstructing tree-child networks from their MLLSs follows naturally from our proof, and we present this in Section~\ref{sec:Algorithm}.
In the last section, we conclude with some discussion of potential future directions.


\section{Preliminaries and Definitions}\label{sec:Preliminaries}

\begin{definition}\label{def:PhylNetwork}
 Let~$X$ be a non-empty finite set.
 A \emph{rooted binary phylogenetic network}~$N$ on~$X$ is a \emph{directed acyclic graph} (a directed graph with no directed cycles) \review{in which every node is \leoR{in} one of the following \leoR{categories}:}
 \begin{enumerate}
  \item one node of indegree-0 and outdegree-1 (the \emph{root});
  \item $|X|$ nodes of indegree-1 and outdegree-0 (\emph{leaf nodes} or \emph{leaves});
  \review{
  \item nodes of indegree-1 and outdegree-2 (\emph{tree nodes}); and
  \item nodes of indegree-2 and outdegree-1 (\emph{reticulations}). 
  }
 \end{enumerate}
 The leaves are bijectively labelled with label set~$X$, where the leaf set is sometimes denoted~$L(N)$.
\end{definition}

We will henceforth refer to rooted binary phylogenetic networks as \emph{networks}. The edges feeding into reticulations are called \emph{reticulation edges} and each non-reticulation edge is called a \emph{tree edge}.
 We write~$v\in N$ to denote that~$v$ is a node in~$N$.
 Given an edge~$(x,y)$ in~$N$, we say that~$x$ is a \emph{parent} of~$y$ and~$y$ is a \emph{child} of~$x$.
 A \emph{directed path} of length~$n$ from~$x$ to~$y$ is a sequence of edges~$(v_0,v_1), \dots, (v_{n-1},v_n)$ such that~$x = v_0, y = v_n$, where~$v_i$ is a parent of~$v_{i+1}$ for~$i = 0,\dots,n-1$.
 The node~$x$ is an \emph{ancestor} of / \emph{above}~$y$, or~$y$ is a \emph{descendant} of / \emph{below}~$x$ if there is a directed path from~$x$ to~$y$ in~$N$.
 Two nodes are \emph{incomparable} if neither nodes are above the other.
 The network~$N$ is \emph{tree-child} if every non-leaf node in~$N$ is a parent of a tree node or a leaf. A \emph{tree path} is a directed path that contains no reticulations except possibly for its starting node. It is easy to see that, for each node~$v$ of a tree-child network, there exists a tree path to a leaf.
 
 Two networks~$N,N'$ on~$X$ are \emph{isomorphic} if there exists a bijection~$f$ between the vertices of~$N$ and the vertices of~$N'$ such that~$(u,v)$ is an edge of~$N$ if and only if~$(f(u),f(v))$ is an edge of~$N'$ and each leaf of~$N$ is mapped to a leaf of~$N'$ with the same label.

\begin{definition}
 \emph{Deleting} a node~$x$ from a network is the action of removing~$x$ and all of its incident edges from~$N$.
 \emph{Deleting} an edge~$(x,y)$ from a network is the action of removing~$(x,y)$ from~$N$.
\end{definition}

 \review{A \emph{cut-node} is a node of a network whose deletion disconnects the network.
 A \emph{cut-edge} is an edge of a network whose deletion disconnects the network.}
 A \emph{pendant subnetwork} of a network~$N$ is obtained by deleting a cut-edge~$(x,y)$ from~$N$ and taking the connected component containing~$y$. A \emph{pendant subtree} is a pendant subnetwork that is a tree.

\begin{definition}
A \emph{biconnected component} of a network~$N$ is a maximal subgraph with at least three nodes such that no node of the subgraph is a cut-node of the subgraph. A \emph{blob} is either a biconnected component or a tree node that is not in a biconnected component.
\end{definition} 

\review{We say~$N$ is a \emph{level-$k$ network}, denoted~$lvl(N) = k$, if the maximum number of reticulations contained in any biconnected component is~$k$~\cite{jansson2006inferring}.
 A level-0 network is a tree (a network with no reticulations).}
Since the level of a blob is the number of reticulations it contains, a tree node that is not in a biconnected component is a level-0 blob.

We say that a network~$N$ on~$X$ \emph{displays} a network~$N'$ on~$X$ 
if some subgraph \review{$N''$} of~$N$ is a subdivision of~$N'$ \review{(i.e., if~$N''$ can be obtained from~$N'$ by replacing directed edges by directed paths)}. 
An alternative view of when a network is displayed by another network is based on \emph{cleaning up} a directed acyclic graph. 

\begin{definition}
 \emph{Cleaning up} a directed acyclic graph 
 is the act of applying the following operations until none is applicable:
 \begin{enumerate}
  \item delete an unlabelled outdegree-0 node;
  \item \emph{suppress} an indegree-1 outdegree-1 node \review{(i.e., if $(u,v),(v,w)$ are edges in a graph where~$v$ is an indegree-1 outdegree-1 vertex, we \emph{suppress}~$v$ by deleting the node~$v$ and adding an edge~$(u,w)$.)};
  \item replace a pair of parallel edges by a single edge, i.e., delete one of the parallel edges and suppress both the parent node and the child node.
 \end{enumerate}
\end{definition}

Note that cleaning up a directed acyclic graph, obtained from a network on~$X$ by deleting, for each reticulation, at most one of the incoming reticulation edges,
returns a network on~$X$.

\begin{lemma}\label{lem:cleanup}
If a network~$N$ on~$X$ displays a network~$N'$ on~$X$, then we can obtain~$N'$ from~$N$ by deleting, for each reticulation, at most one of the two incoming reticulation edges, and subsequently cleaning up.
\end{lemma}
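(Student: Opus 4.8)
The plan is to induct on the number~$r(N)$ of reticulations of~$N$, removing one reticulation edge at a time while maintaining the property that the current graph still displays~$N'$. Using the definition of displaying, I would first fix a subgraph~$N''$ of~$N$ that is a subdivision of~$N'$; since~$N$ and~$N'$ have the same leaf set~$X$, the leaves of~$N''$ are exactly the leaves of~$N$, and every non-root node of~$N''$ has an incoming edge lying in~$N''$. I would then classify each reticulation~$r$ of~$N$ according to how it sits inside~$N''$: either both incoming edges of~$r$ lie in~$E(N'')$, in which case~$r$ appears as a reticulation of the subdivision and hence corresponds to a reticulation of~$N'$, or at most one of them does.

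The base case of the induction is when every reticulation of~$N$ has both incoming edges in~$N''$. Here the key observation is that no reticulation of~$N$ can lie outside~$N''$, so everything in~$N$ that is not in~$N''$ is reticulation-free ``junk''. Because every labelled leaf already lies in~$N''$, each maximal piece of~$N$ outside~$N''$ has only unlabelled sinks and is therefore erased by the sink-deletion rule of cleaning up, while the chain of degree-$(1,1)$ vertices joining the root of~$N$ to the root of~$N''$, and the subdivision vertices of~$N''$ themselves, are removed by suppression. Hence cleaning up~$N$, deleting no reticulation edge at all, already returns~$N'$.

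Otherwise some reticulation~$r$ has an incoming edge~$e\notin E(N'')$. I would delete~$e$ and clean up to obtain a network~$N_1$ (which is a network by the remark following the definition of cleaning up). Deleting~$e$ reduces the indegree of~$r$ to at most~$1$, so~$r$ is suppressed or deleted, and since no cleanup operation can create a reticulation we get~$r(N_1)<r(N)$. Because~$e\notin E(N'')$ we still have~$N''\subseteq N-e$, and I would verify that cleaning up carries this copy of~$N''$ to a subdivision of~$N'$: sink-deletion never touches a vertex of~$N''$, as every such vertex reaches a leaf through~$N''$; suppression of a subdivision vertex keeps it a subdivision; and in the one delicate situation, the parallel-edge rule, one keeps the~$N''$ edge. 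Thus~$N_1$ still displays~$N'$, so by induction~$N'$ is obtained from~$N_1$ by deleting at most one incoming edge per reticulation and cleaning up. As~$r$ is no longer a reticulation in~$N_1$, no later deletion is charged to it, and over the whole recursion at most one incoming edge is deleted per reticulation of~$N$.

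The step I expect to be the main obstacle is reconciling this iterative process with the statement, which asks for all deletions to be performed on~$N$ up front, followed by a single cleanup. This requires a commutation (confluence) property of cleaning up: each edge deleted at a later stage corresponds, through the edge-merges caused by suppression, to a genuine incoming reticulation edge of~$N$, and deleting it before rather than after an intervening cleanup does not change the final cleaned-up graph. Making this commutation precise, together with the parallel-edge bookkeeping in the inductive step, is where the real care lies; the combinatorial core, namely that each reticulation of~$N$ either survives into~$N'$ or loses exactly one parent, is the easy part.
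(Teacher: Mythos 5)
Your inductive plan is genuinely different from the paper's proof, but as written it has two concrete gaps.

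First, the base case is argued incorrectly. You claim that when every reticulation of~$N$ has both incoming edges in~$N''$, the material of~$N$ outside~$N''$ ``has only unlabelled sinks'' and is erased by the sink-deletion rule. But~$N$ is a network: \emph{every} outdegree-$0$ node of~$N$ is a labelled leaf, so rule~1 of cleaning up is never applicable before any edge has been deleted. A bottom-most vertex of a piece of~$N$ outside~$N''$ is not a sink of~$N$ --- it has outgoing edges into~$N''$ --- and nothing erases it. The correct assertion in your base case is stronger and needs a different argument: \emph{nothing} of~$N$ lies outside~$N''$, so that~$N\cong N'$ outright. To see this, note that under the base-case hypothesis the only vertex of~$N''$ that can receive an edge from outside~$N''$ is the root of~$N''$ (every leaf and every non-root tree node of~$N$ lying in~$N''$ has its unique incoming~$N$-edge inside~$N''$, and every reticulation has both incoming edges inside~$N''$ by hypothesis); one then checks that any outside edge forces either a directed cycle through the root of~$N''$ or an indegree violation there. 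Your proposal contains neither this argument nor any correct substitute for it.

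Second, the step you yourself flag as ``where the real care lies'' is, under your approach, the actual content of the lemma, and it is missing. The statement requires deleting a set of original edges of~$N$, at most one per reticulation, followed by a \emph{single} clean-up. Your recursion instead deletes an edge of each intermediate network~$N_1,N_2,\dots$; such an edge is a merged path of~$N$, and you must (i) identify it with a genuine incoming reticulation edge of~$N$, and (ii) prove that performing all of these original deletions on~$N$ at once and then cleaning up once yields the same result --- a confluence property of the clean-up rewriting system. Neither is supplied, and (ii) is not a routine check. In addition, the parallel-edge case in your inductive step (``one keeps the~$N''$ edge'') needs an argument that the two merged parallel edges cannot both carry edges of~$N''$ (otherwise~$N'$ would contain parallel edges) and that the ensuing suppressions remove only subdivision vertices. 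The paper sidesteps both difficulties at once: it fixes the embedding of~$N'$ into~$N$ (taken to contain the root of~$N$), deletes up front, for every reticulation of which exactly one incoming edge is used by the embedding, the unused incoming edge, cleans up once, and then shows nothing outside the embedding survives by taking a \emph{lowest} surviving unused edge and deriving a contradiction. If you want to keep your induction, the honest fix is essentially to abandon the edge-at-a-time clean-ups and rerun your case analysis in that one-shot form.
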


\begin{proof}
Since~$N$ displays~$N'$, some subgraph of~$N$ is a subdivision of~$N'$.
Because of this, there is an embedding of~$N'$ into~$N$ where the nodes and edges of~$N'$ are mapped to nodes and paths of~$N$, such that these paths are edge-disjoint. Without loss of generality, this embedding contains the root of~$N$.
For each reticulation of which exactly one incoming reticulation edge is used by the embedding, delete the other incoming reticulation edge, and subsequently clean up the directed acyclic graph.
We claim that all unused edges in the embedding have been removed in the resultant network~$M$.

Suppose not.
Then there exists an edge in~$M$ that is not used in the embedding of~$N'$ into~$N$. Consider a lowest such edge~$(x,y)$.

Node~$y$ cannot be a leaf of~$N$ because all leaves of~$N$ are in the embedding of~$N'$ into~$N$.

Now suppose that~$y$ is a tree node of~$N$. It is not possible that an outgoing edge of~$y$ is in the embedding, because the root of the embedding is the root of~$N$. Hence, the outgoing edges of~$y$ are not in the embedding. At least one of these outgoing edges of~$y$ is in~$M$ because otherwise~$y$ would have been deleted by cleaning up rule 1. Hence, at least one outgoing edge of~$y$ is in~$M$ but not in the embedding of~$N'$ into~$N$, contradicting the assumption that~$(x,y)$ is a lowest such edge.

Hence,~$y$ is a reticulation. If the other incoming edge of the reticulation is also not in the embedding, it follows similarly to the previous case that the outgoing edge of~$y$ is in~$M$ but not in the embedding, contradicting the assumption that~$(x,y)$ is a lowest such edge. Hence, exactly one incoming edge of~$y$ is used by the embedding. Therefore, the other incoming edge, $(x,y)$, has been deleted, contradicting the assumption that~$(x,y)$ is an edge of~$M$.


This implies that every edge in~$M$ is used in the embedding of~$N'$ into~$N$. Since in addition all indegree-1 outdegree-1 nodes have been suppressed by cleaning up rule 2, we have that~$M$ is~$N'$.
\end{proof}

Let~$\cN(N)$ denote the set of all networks on~$X$ that are displayed by a network~$N$ on~$X$, excluding~$N$ itself. The networks in~$\cN(N)$ are called the \emph{subnetworks} of~$N$.  
A class~$\cC$ of networks is called \emph{subnetwork-reconstructible} if for any two networks~$N,N'\in\cC$ with~$\cN(N)=\cN(N')$, we have that~$N$ and~$N'$ are isomorphic.


A related but subtly different notion is the following. Let~$N$ be a level-$k$ network.
Then~$\cN^{k-1}(N)$ denotes the set of subnetworks of~$N$ that \review{are} of level at most~$k-1$. The networks in~$\cN^{k-1}(N)$ are called the \emph{lower-level subnetworks} of~$N$.
Then a class~$\cC$ of networks is called \emph{level-reconstructible} if for any two networks~$N,N'\in\cC$ of level-$k$ and~$\cN^{k-1}(N)=\cN^{k-1}(N')$, we have that~$N$ and~$N'$ are isomorphic.
Note that if a network is level-reconstructible then it is subnetwork-reconstructible.
The converse is not true in general, and an example of this is shown in Figure~\ref{fig:SubnetReconButNotLvlRecon}.

\begin{figure}[h]
 \centering
 \includegraphics[scale = 0.5]{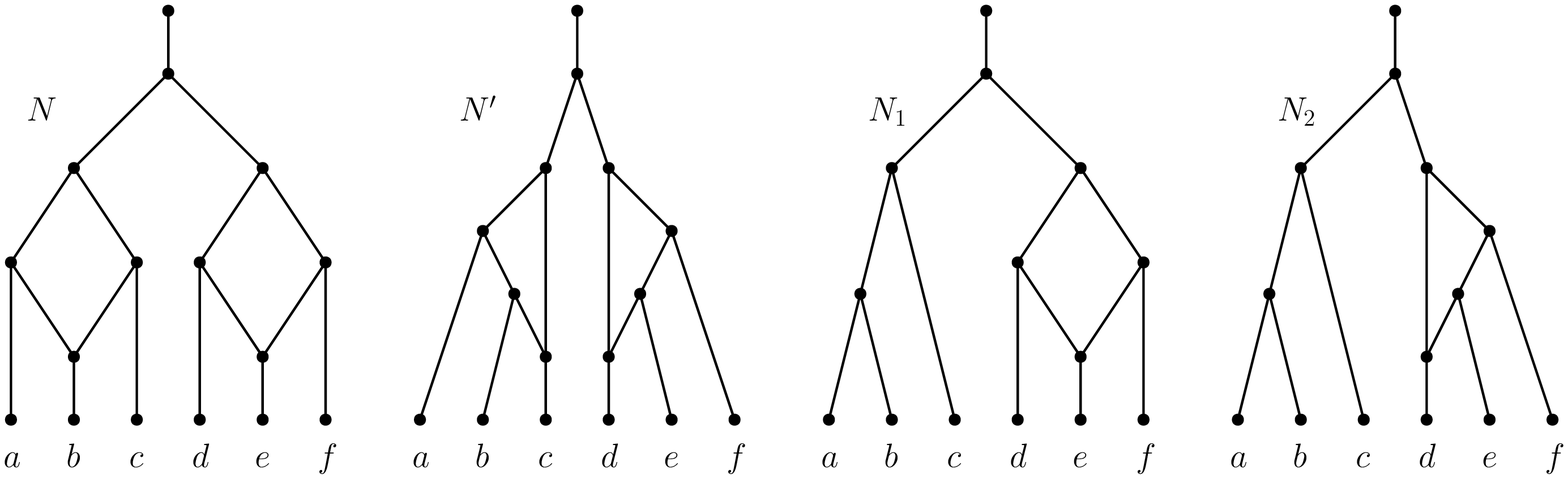}
 \caption{Networks~$N$ and~$N'$ are non-isomorphic but have the same lower-level subnetworks. 
 Hence, any class containing~$N$ and~$N'$ is not level-reconstructible.
 However, these networks have different subnetworks:~$N_1$ is a subnetwork of~$N$ but not of~$N'$;~$N_2$ is a subnetwork of~$N'$ but not of~$N$.
 So~$\{N,N'\}$ is subnetwork-reconstructible.}
 \label{fig:SubnetReconButNotLvlRecon}
\end{figure}

In this paper we prove a result that is stronger than level-reconstructibility.
We first define a type of reticulation edge deletion, and we introduce a corresponding subclass of networks.

\begin{definition}
A reticulation edge deletion is \emph{valid} if the resulting subnetwork, after cleaning up, contains exactly~2 nodes and~3 edges fewer than the original network, i.e., only the reticulation edge is deleted and its endpoints suppressed.
A reticulation edge deletion is \emph{invalid} otherwise.
Call a reticulation edge \emph{valid / invalid} if its deletion is valid / invalid.
\end{definition}

\begin{definition}\label{def:ValidNetwork}
 Networks are~\emph{valid} if all reticulation edges in the network are valid.
\end{definition}

An example of a valid reticulation edge is shown in Figure~\ref{fig:InvalidEdges}.

\begin{lemma}\label{lem:DeletingRetEdgeTCValid}
 All reticulation edges in a tree-child network are valid.
\end{lemma}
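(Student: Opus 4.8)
The plan is to take an arbitrary reticulation edge $(u,v)$ and track exactly what \emph{cleaning up} does after it is deleted, using the tree-child property to rule out any cascade beyond the suppression of its two endpoints. Write $v$ for the reticulation at the head of the edge, let $w$ be its other parent and $c$ its unique child, and let $p$ be the parent of $u$ and $s$ the other child of $u$.

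First I would record the structural consequences of tree-childness at the two parents of $v$. Since a reticulation has out-degree one, any node whose only child is $v$ would have no tree-node or leaf child, violating tree-childness; hence neither parent of $v$ can be a reticulation or the root, so $u$ and $w$ are both tree nodes. Applying the tree-child condition to $u$ itself: as one of its children, namely $v$, is a reticulation, the other child $s$ must be a tree node or a leaf. Symmetrically, the other child $t$ of $w$ is a tree node or a leaf.

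Next I would run the clean-up. After deleting $(u,v)$, both $v$ (now with single parent $w$ and child $c$) and $u$ (now with single parent $p$ and child $s$) have in-degree and out-degree one, so rule~2 suppresses each of them, removing the nodes $u,v$ and replacing the paths through them by the edges $(w,c)$ and $(p,s)$. This already accounts for exactly $2$ nodes and $3$ edges fewer, which is precisely what \emph{valid} requires; so the whole content of the lemma is to show that cleaning up stops here.

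The main obstacle -- and the only place tree-childness is really needed -- is excluding any further clean-up operation. For this I would verify two things, noting that possible coincidences among the reconnected neighbours $p,s,w,c$ do not affect either check. (i) None of $p,s,w,c$ becomes an in-degree-one out-degree-one node: each reconnection leaves the in- and out-degree of the affected node unchanged, and none of these nodes had degree pattern $(1,1)$ to begin with (a tree node, reticulation, leaf, or root never does). (ii) Neither new edge is parallel to an existing one: the edge $(w,c)$ could only duplicate an edge if $c$ were already a child of $w$, i.e.\ $c=t$, which would make $c$ a reticulation with parents $w$ and $v$ and hence give $w$ two reticulation children, contradicting tree-childness; and the edge $(p,s)$ could only duplicate an edge if $s$ had $p$ as a second parent, forcing $s$ to be a reticulation, contradicting the fact established above that $s$ is a tree node or a leaf. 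With both possibilities excluded, no instance of rules~1--3 remains, so exactly $2$ nodes and $3$ edges are deleted and $(u,v)$ is valid. As $(u,v)$ was arbitrary, every reticulation edge of a tree-child network is valid.
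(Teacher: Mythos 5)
Your overall strategy coincides with the paper's: delete $(u,v)$, observe that $u$ and $v$ become indegree-1 outdegree-1 nodes and are suppressed (accounting for exactly $2$ nodes and $3$ edges), and then use the tree-child property to show that cleaning up stops there, the only real danger being the parallel-edges rule. However, your parallel-edge analysis has a gap: you only check whether each \emph{new} edge $(p,s)$ or $(w,c)$ duplicates a \emph{pre-existing} edge, but parallel edges can also arise when the two new edges are parallel \emph{to each other}. This happens precisely when $p=w$ and $s=c$, i.e., when $(u,v)$ is the middle edge of a ``diamond'' with edges $(p,u),(p,v),(u,v),(u,c),(v,c)$; suppressing $u$ and $v$ then creates two copies of the edge $(p,c)$, neither of which duplicates an edge of the original network, so both of your checks are passed vacuously. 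This is exactly the third sub-case that the paper treats separately, and your parenthetical remark that ``possible coincidences among the reconnected neighbours $p,s,w,c$ do not affect either check'' is an assertion rather than an argument --- this coincidence is precisely the dangerous one.

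Fortunately the gap closes immediately using facts you already established: if the two new edges were mutually parallel, then $s=c$, so $s$ would have the two distinct parents $u$ and $v$ and hence be a reticulation, contradicting your observation that $s$, as the sibling of the reticulation child $v$ of the tree node $u$, must be a tree node or a leaf. (The paper instead derives the contradiction at $v$: in the diamond, the child of the reticulation $v$ would itself be a reticulation, violating tree-childness.) With this third case added, your argument is complete and is essentially the paper's proof.
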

\begin{proof}
 Let~$N$ be a tree-child network and suppose for a contradiction that deleting some reticulation edge~$e = (u,v)$ is invalid.
 We note that~$v$ is a reticulation. As~$N$ is tree-child, we also have that~$u$ is a tree node.
 Therefore, after deleting~$e$,~$u$ and~$v$ will each be indegree-1 outdegree-1 nodes, and will be suppressed by cleaning up.
 This removes a total of 2 nodes and 3 edges. 
 Hence, to show that~$e$ is valid, it remains to show that no further cleaning up occurs after deleting~$e$ and suppressing~$u$ and~$v$.
 As all remaining nodes have the same indegree-and outdegree-as before, there are no unlabelled out-degree 0 nodes and no remaining indegree-1 outdegree-1 nodes. So we just need to show that deleting~$e$ creates no parallel edges.

 We split \review{the proof} into three sub-cases.
 First assume that suppressing~$u$ results in the creation of parallel edges.
 Then we must have that~$u$ is contained in a `triangle' with nodes~$x,y$ and edges~$(x,u),(x,y),(u,y)$.
 But then~$y$ is a reticulation, implying that~$u$ is the parent of two reticulations~$y$ and~$v$.
 Thus~$u$ has no child that is a tree node or a leaf, contradicting the tree-child property of~$N$.
 Next assume that suppressing~$v$ results in the creation of parallel edges.
 Then we must have that~$v$ is contained in a triangle with nodes~$x,y$ and edges~$(x,v),(x,y),(v,y)$.
 But then~$y$ is a reticulation, implying that~$v$ is the parent of a reticulation~$y$.
 Thus~$v$ has no child that is a tree node or a leaf, contradicting the tree-child property of~$N$.
 Finally assume that suppressing both~$u$ and~$v$ results in the creation of parallel edges.
 Then we must have that~$e$ formed the central edge of a `diamond' with nodes~$x,y$ and edges~$(x,u),(x,v),(u,v),(u,y),(v,y)$.
 However this cannot occur since the child of~$v$,~$y$, would be a reticulation, which again contradicts the tree-child property of~$N$.

 Therefore, every reticulation edge of a tree-child network is valid.
\end{proof}

\begin{figure}[h]
 \centering
 \includegraphics[width = \textwidth]{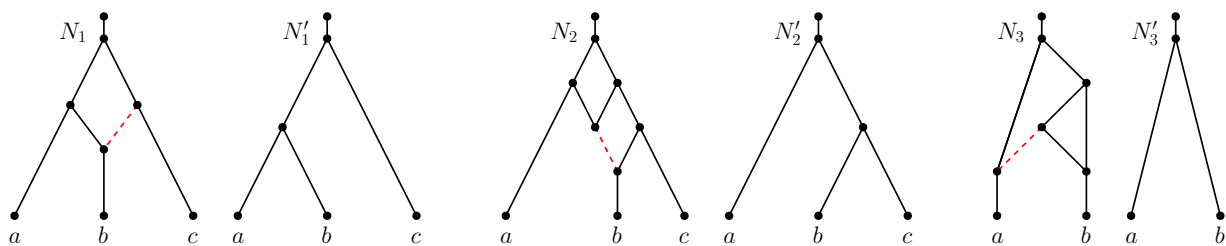}
 \caption{Three networks~$N_1, N_2, N_3$ with their respective maximum subnetworks~$N_1', N_2', N_3'$ obtained by deleting the red reticulation edge and subsequently cleaning up.
 The red reticulation edge in~$N_1$ is valid, however the red \review{dashed} reticulation edges in~$N_2$ and~$N_3$ are invalid.
 The subnetwork~$N_2'$ contains~4 fewer nodes and~6 fewer edges than~$N_2$, and~$N_3'$ contains~3 fewer nodes and~5 fewer edges than~$N_3$.
 }
 \label{fig:InvalidEdges}
\end{figure}
The above lemma does not hold for general networks (see Figure~\ref{fig:InvalidEdges}).
Intuitively, Lemma~\ref{lem:DeletingRetEdgeTCValid} states that removing any reticulation edge from a tree-child network is self-contained, and it does not affect any other reticulations within the network.
No additional information is `lost' when deleting valid reticulation edges.
In particular, Lemma~\ref{lem:DeletingRetEdgeTCValid} implies that tree-child networks are valid.

From here onwards, it is implicitly assumed that the network~$N'$ obtained by deleting some reticulation edges from~$N$ undergoes cleaning up.

\begin{definition}
 A \emph{maximum subnetwork} of a network~$N$ is a subnetwork obtained by a single reticulation edge deletion from~$N$.
\end{definition} 

\begin{lemma}\label{lem:RetEdgeDelNetTC}
 Every maximum subnetwork of a tree-child network is tree-child.
\end{lemma}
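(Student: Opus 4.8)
The plan is to track exactly what the deletion does locally and then verify the tree-child property node by node. A maximum subnetwork~$N'$ is obtained from the tree-child network~$N$ by deleting a single reticulation edge~$e=(u,v)$ and cleaning up. By Lemma~\ref{lem:DeletingRetEdgeTCValid}, $e$ is valid, so the cleanup only suppresses~$u$ and~$v$; no other cleaning up occurs, and every remaining node keeps the indegree and outdegree it had in~$N$. In particular the node set of~$N'$ is that of~$N$ with~$u,v$ removed, and node types (tree node, reticulation, leaf) are preserved for every node other than~$u,v$. It therefore suffices to check that every non-leaf node of~$N'$ still has a child that is a tree node or a leaf.

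First I would pin down the local structure. Here~$v$ is a reticulation, and since~$N$ is tree-child its parent~$u$ is a tree node; let~$p_u$ be the parent of~$u$, let~$w$ be the child of~$u$ other than~$v$, let~$u'$ be the second parent of~$v$, and let~$c$ be the child of~$v$. Because~$u$ is tree-child in~$N$ and its other child~$v$ is a reticulation, the child~$w$ must be a tree node or a leaf. Deleting~$e$ and suppressing~$u,v$ introduces only the edges~$(p_u,w)$ and~$(u',c)$, so the only nodes whose child sets differ between~$N$ and~$N'$ are~$p_u$ and~$u'$.

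The remaining argument is a short case analysis on a non-leaf node~$z$ of~$N'$. If~$z\notin\{p_u,u'\}$, then its children are unchanged; its tree-node-or-leaf child in~$N$ is necessarily distinct from~$u,v$ and survives in~$N'$ with unchanged type, so~$z$ is fine. If~$z=p_u$ (this also covers the triangle case~$p_u=u'$), the newly added child~$w$ is a tree node or a leaf, so~$z$ is fine. The step I expect to need the most care is~$z=u'$ with~$u'\neq p_u$: I would first observe that~$u'$ cannot be a reticulation, since then its unique child would be the reticulation~$v$, violating tree-childness of~$N$; hence~$u'$ is a tree node, and its good child~$y$ (a tree node or leaf, forced because its other child~$v$ is a reticulation) satisfies~$y\neq v$ and~$y\neq u$ (since~$u$'s only parent is~$p_u\neq u'$), so~$y$ remains a child of~$z$ in~$N'$ with unchanged type. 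Collecting the cases shows every non-leaf node of~$N'$ has a tree-node-or-leaf child, so~$N'$ is tree-child.
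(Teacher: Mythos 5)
Your proof is correct and is essentially the paper's argument run in the contrapositive direction: the paper supposes~$N'$ has a node~$t$ all of whose children are reticulations, deduces that~$t$ must be the parent of~$u$ and that~$u$'s surviving child is a reticulation, so that~$u$ would have two reticulation children in~$N$ --- a contradiction --- whereas you verify directly that the only two nodes with altered child sets, $p_u$ and~$u'$, each retain a tree-node-or-leaf child. The only presentational difference is that you explicitly invoke Lemma~\ref{lem:DeletingRetEdgeTCValid} to justify that cleaning up suppresses only~$u$ and~$v$ (hence node types and all other child sets are preserved), a fact the paper's proof uses implicitly.
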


\begin{proof}
 Suppose that there exists a tree-child network~$N$ with a maximum subnetwork~$N'$ that is not tree-child.
 Then there exists a node~$t$ in~$N'$ such that all of its children are reticulations.
 Let~$(u,v)$ be the reticulation edge deleted from~$N$ to obtain~$N'$.
 Since~$t$ has a tree node as a child in~$N$, node~$u$ must be a child of~$t$ in~$N$. Hence,~$(t,u)$ and~$(u,r)$ are edges in~$N$, for some child~$r$ of~$t$ in~$N'$. 
 But then~$N$ is not tree-child as~$u$ is the parent of only reticulations~$v$ and~$r$, a contradiction 
 \review{(see Figure~\ref{fig:RetEdgeDelNetTC}).}
\end{proof}

\begin{figure}
    \centering
    \includegraphics[scale = 0.3]{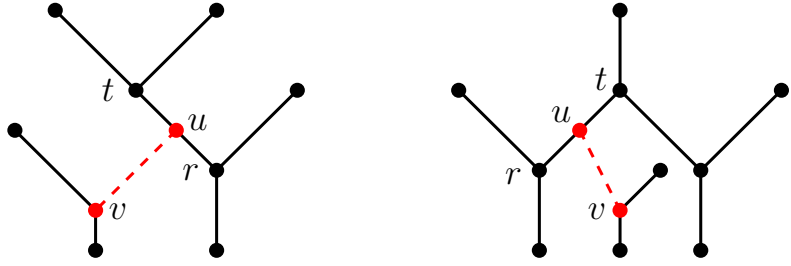}
    \caption{\review{Visual aid for the proof of Lemma~\ref{lem:RetEdgeDelNetTC}. The left case is when~$N'$ contains a reticulation~$t$ that is a parent of a reticulation~$r$. The right case is when a tree node~$t$ is a parent of two reticulations. In either case, the red dashed edge~$(u,v)$ must be inserted in these particular places to obtain~$N$, and in either case~$N$ is not tree-child.}}
    \label{fig:RetEdgeDelNetTC}
\end{figure}

\begin{definition}\label{def:maxllsubnetwork}
 \review{For~$k\geq1$}, a \emph{maximum lower-level subnetwork (MLLS)} of a level-$k$ network~$N$ is a subnetwork obtained by deleting exactly one valid reticulation edge from every level-$k$ blob in~$N$.
 Let~$\cN^{mlls}(N)$ denote the set of all MLLSs of~$N$.
 \end{definition}
 
Observe that, as long as~$\cN^{mlls}(N)$ is a non-empty set, it is equal to the set of all subnetworks of~$N$ with level at most~$k-1$ and a maximum number of edges.

By considering each reticulation edge deletion separately, it follows from Lemma~\ref{lem:RetEdgeDelNetTC} that the MLLSs of a tree-child network are tree-child.\\

A class~$\cC$ of networks is called \emph{MLLS-reconstructible} if for any two networks~$N,N'\in\cC$ with~$\cN^{mlls}(N) = \cN^{mlls}(N')$, we have that~$N$ and~$N'$ are isomorphic.
Because all MLLSs are lower level subnetworks of~$N$, we have~$\cN^{mlls}(N)\subseteq \cN^{k-1}(N)$.
Therefore, if a class of networks is MLLS-reconstructible, then it is level-reconstructible.
The converse also holds for valid networks.

\review{
\begin{lemma}\label{lem:ValidLLS=MLLS}
 Let~$N$ be a level-$k$ valid network. Then we may obtain~$\cN^{k-1}(N)$ from~$\cN^{mlls}(N)$.
\end{lemma}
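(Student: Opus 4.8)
The plan is to make $\cN^{k-1}(N)$ explicitly computable from $\cN^{mlls}(N)$ by proving that every lower-level subnetwork of $N$ is either an MLLS or is itself displayed by some MLLS. Concretely, I would establish the identity
\[
\cN^{k-1}(N) \;=\; \bigcup_{M'\in\cN^{mlls}(N)}\bigl(\cN(M')\cup\{M'\}\bigr).
\]
Since each MLLS $M'$ is known and we can list its entire subnetwork set $\cN(M')$ (every element of which has level at most $lvl(M')\le k-1$), the right-hand side is obtainable from $\cN^{mlls}(N)$ alone, which is exactly what the statement asks for.

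The inclusion $\supseteq$ is the easy direction. Each MLLS $M'$ is displayed by $N$ and has level at most $k-1$, so $M'\in\cN^{k-1}(N)$; and any subnetwork of $M'$ is displayed by $M'$, hence by $N$ (displaying is transitive, directly from the subdivision definition), and has level at most $lvl(M')\le k-1$, so it too lies in $\cN^{k-1}(N)$.

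For the reverse inclusion $\subseteq$, I would take any $M\in\cN^{k-1}(N)$ and apply Lemma~\ref{lem:cleanup} to obtain $M$ from $N$ by deleting a set $D$ of reticulation edges, at most one incoming edge per reticulation, followed by cleaning up. The first key step is to show that $D$ contains at least one reticulation edge from every level-$k$ blob $B$ of $N$: if $D$ avoided $B$ entirely, then $B$ would persist intact in $M$ with all $k$ of its reticulations lying in one biconnected component, forcing $lvl(M)\ge k$, a contradiction. Then, choosing one such edge $e_B\in D$ from each level-$k$ blob $B$ and deleting exactly the set $D_0=\{e_B\}$ (each $e_B$ being valid because $N$ is valid) yields, after cleaning up, a genuine MLLS $M'$. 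Finally, because the embedding of $M$ into $N$ supplied by Lemma~\ref{lem:cleanup} uses none of the edges of $D\supseteq D_0$, this embedding survives the deletion of $D_0$ and the subsequent cleaning up, and therefore witnesses that $M'$ displays $M$; hence $M\in\cN(M')\cup\{M'\}$, completing the inclusion.

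The main obstacle I anticipate is the claim that a level-$k$ blob untouched by $D$ really does persist intact, with all $k$ reticulations still confined to a single biconnected component, in $M$. This is precisely where validity is indispensable: in a general network, deleting reticulation edges elsewhere could trigger cascading suppressions or parallel-edge contractions that reach into $B$ and lower its reticulation count, but Definition~\ref{def:ValidNetwork} rules this out by forcing each deletion to be self-contained. Making the argument airtight thus requires carefully verifying that the self-containedness of each valid deletion leaves both the reticulations and the internal cycles of any untouched blob unaffected, and likewise that the edges of $D_0$ we delete do not interfere with the surviving embedding paths of $M$.
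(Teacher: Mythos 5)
Your proposal is correct and takes essentially the same route as the paper: both invoke Lemma~\ref{lem:cleanup} to write a lower-level subnetwork as $N$ minus a set of deleted reticulation edges, note that this set must hit every level-$k$ blob, extract from it an MLLS (using validity of $N$), and conclude the identity $\cN^{k-1}(N)=\cN^{mlls}(N)\cup\bigcup_{M'\in\cN^{mlls}(N)}\cN(M')$. Your write-up is somewhat more explicit than the paper's (spelling out the easy $\supseteq$ inclusion and why each level-$k$ blob must lose an edge), but it is the same argument.
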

\begin{proof}
 Let~$M\in\cN^{k-1}(N)$.
 As~$M$ is a lower-level subnetwork of~$N$, by Lemma~\ref{lem:cleanup},~$M$ must have been obtained from~$N$ by deleting at least one reticulation edge, say~$e_i$, from every level-$k$ blob, say~$B_i$, in~$N$, and deleting some reticulation edges from other blobs.
 By definition of MLLSs, there must exist an MLLS~$N'\in \cN^{mlls}(N)$ that was obtained from~$N$ by deleting~$e_i$ from~$B_i$.
 Then clearly, some subnetwork of~$N'$, obtained by deleting the rest of the reticulation edges, is~$M$.
 That is,~$\cN^{k-1}(N)$ is precisely the set of all subnetworks of the networks of~$\cN^{mlls}(N)$, and the networks of~$\cN^{mlls}(N)$ (i.e., $\cN^{k-1}(N) = \cN^{mlls}(N) \cup \bigcup_{M\in\cN^{mlls}(N)}\cN(M)$).
\end{proof}
\begin{corollary}
 Given a class~$\cC$ of networks containing only valid networks, if the class is level-reconstructible, then it is MLLS-reconstructible.
\end{corollary}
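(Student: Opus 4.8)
The plan is to feed the hypothesis of MLLS-reconstructibility into the definition of level-reconstructibility, using Lemma~\ref{lem:ValidLLS=MLLS} as the bridge. Let $N,N'\in\cC$ be valid networks with $\cN^{mlls}(N)=\cN^{mlls}(N')$, of levels $k$ and $k'$ respectively; we may assume $k,k'\geq 1$, since the definition of an MLLS presupposes level at least one. The goal is $N\cong N'$, and the route is to first show that $k=k'$ and then invoke level-reconstructibility.

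To show the levels coincide, I would argue that the level of a valid network is an invariant of its set of MLLSs. The key point is that \emph{some} MLLS of $N$ has level exactly $k-1$. One direction is routine: deleting one valid reticulation edge from each level-$k$ blob reduces each such blob to a region with $k-1$ reticulations, hence of level at most $k-1$, while leaving all other blobs (which already have level at most $k-1$) untouched, so every MLLS has level at most $k-1$. For the other direction I would exhibit a single level-$k$ blob together with a valid reticulation edge whose deletion leaves that blob as a biconnected component with exactly $k-1$ reticulations; deleting any valid edge from the remaining level-$k$ blobs then produces an MLLS of level precisely $k-1$. Granting this, pick such an MLLS $M$ of $N$. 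Since $\cN^{mlls}(N)=\cN^{mlls}(N')$, $M$ is also an MLLS of $N'$, so $lvl(M)\leq k'-1$ and hence $k\leq k'$; the symmetric argument applied to an MLLS of $N'$ of level $k'-1$ gives $k'\leq k$, whence $k=k'$.

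With the levels equal, I would finish using Lemma~\ref{lem:ValidLLS=MLLS}. That lemma expresses $\cN^{k-1}(N)$ purely in terms of $\cN^{mlls}(N)$, namely as $\cN^{mlls}(N)\cup\bigcup_{M\in\cN^{mlls}(N)}\cN(M)$, and likewise for $N'$ with $\cN^{k'-1}(N')$. Because the two MLLS-sets are equal, applying the \emph{same} formula to each yields $\cN^{k-1}(N)=\cN^{k'-1}(N')$, and since $k=k'$ this reads $\cN^{k-1}(N)=\cN^{k-1}(N')$. As $N$ and $N'$ both lie in $\cC$ and both have level $k$, level-reconstructibility now gives $N\cong N'$, which is exactly the assertion that $\cC$ is MLLS-reconstructible.

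I expect the main obstacle to be the structural claim invoked in the second paragraph: that a valid level-$k$ network admits a subnetwork of level exactly $k-1$, equivalently that some level-$k$ blob has a valid reticulation edge whose deletion leaves a biconnected component with $k-1$ reticulations. The natural approach is to delete an incoming edge of an extremal reticulation of the blob (for instance a topmost one, whose parents are reached from the blob's head by reticulation-free paths) and to verify that the remaining $k-1$ reticulations stay within a single biconnected component after cleaning up. Checking that $2$-connectivity is preserved, rather than the blob fragmenting into several lower-level blobs, is the delicate part, and is precisely where the validity hypothesis is used: each deletion removes exactly one reticulation together with its two suppressed endpoints and creates no parallel edges, so no additional collapsing can cascade through the blob.
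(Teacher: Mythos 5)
Your third paragraph \emph{is} the paper's proof, in its entirety: the paper takes~$N,N'\in\cC$ with~$\cN^{mlls}(N)=\cN^{mlls}(N')$, applies Lemma~\ref{lem:ValidLLS=MLLS} to both networks to conclude~$\cN^{k-1}(N)=\cN^{k-1}(N')$, and invokes level-reconstructibility --- nothing more. Where you diverge is that you spend most of the proposal trying to establish~$k=k'$ first, a step the paper silently skips by writing~$\cN^{k-1}$ with the same~$k$ for both networks. Under a strict reading of the paper's definition (level-reconstructibility quantifies over pairs of networks ``of level-$k$''), your concern is legitimate: if the two networks could have different levels, the hypothesis of level-reconstructibility would not formally apply, so you have identified a subtlety that the paper glosses over rather than one you invented.

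The problem is that your resolution of that subtlety has a genuine gap, which you half-acknowledge. You need the structural claim that some MLLS of a valid level-$k$ network has level exactly~$k-1$, i.e., that some level-$k$ blob admits a valid reticulation edge whose deletion leaves behind a biconnected component containing all~$k-1$ remaining reticulations. Your stated justification --- that validity means the deletion ``removes exactly one reticulation together with its two suppressed endpoints and creates no parallel edges, so no additional collapsing can cascade'' --- does not prove this. Validity constrains only the cleanup (exactly~$2$ nodes and~$3$ edges vanish); it says nothing about the blob remaining biconnected. A valid deletion can perfectly well fragment a level-$k$ blob into several biconnected components, each containing fewer than~$k-1$ reticulations, so the maximum level over the MLLSs could a priori be smaller than~$k-1$ for every choice of deleted edges. (For~$k=2$ the claim is automatic, since any surviving reticulation lies in a blob of level at least~$1$; for~$k\geq 3$ it requires an argument that the proposal does not contain.) So you face a choice: either adopt the paper's implicit reading, in which case your third paragraph alone is a complete proof identical to the paper's, or insist on proving~$k=k'$, in which case the structural claim must actually be established --- and validity by itself will not give it to you.
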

\begin{proof}
 Let~$N,N'\in\cC$ with~$\cN^{mlls}(N) = \cN^{mlls}(N')$.
 By Lemma~\ref{lem:ValidLLS=MLLS}, this implies that~$\cN^{k-1}(N) = \cN^{k-1}(N')$.
 As the class is level-reconstructible, we have that~$N$ and~$N'$ must be isomorphic.
\end{proof}
Note that this result does not hold in general, as networks may contain invalid reticulation edges that cannot be deleted to obtain an MLLS.
}

\begin{observation}\label{obs:reconstructibility}
Let~$\cC$ be a class of networks.
If~$\cC$ is MLLS-reconstructible, then~$\cC$ is also level-reconstructible.
If~$\cC$ is level-reconstructible, then~$\cC$ is also subnetwork-reconstructible.
\end{observation}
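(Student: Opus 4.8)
The plan is to prove both implications by the same device: if one family of subnetworks can be recovered purely as a function of a larger family, then reconstructibility from the smaller family implies reconstructibility from the larger one. Formally, to deduce ``$B(N)=B(N')\Rightarrow N\cong N'$'' from ``$A(N)=A(N')\Rightarrow N\cong N'$'' it suffices to exhibit a rule that produces the set~$A(N)$ from the set~$B(N)$ alone; then~$B(N)=B(N')$ forces~$A(N)=A(N')$, and the stronger hypothesis finishes the job. I would run this argument first with~$A=\cN^{mlls}$ and~$B=\cN^{k-1}$, and then with~$A=\cN^{k-1}$ and~$B=\cN$.

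For the first implication, assume~$\cC$ is MLLS-reconstructible and take~$N,N'\in\cC$ of level~$k$ with~$\cN^{k-1}(N)=\cN^{k-1}(N')$. By the remark following Definition~\ref{def:maxllsubnetwork}, whenever it is non-empty the set~$\cN^{mlls}(N)$ consists exactly of those members of~$\cN^{k-1}(N)$ with the maximum number of edges. This description refers only to the set~$\cN^{k-1}(N)$, so~$\cN^{k-1}(N)=\cN^{k-1}(N')$ immediately yields~$\cN^{mlls}(N)=\cN^{mlls}(N')$, and MLLS-reconstructibility gives~$N\cong N'$.

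For the second implication, assume~$\cC$ is level-reconstructible and take~$N,N'\in\cC$ with~$\cN(N)=\cN(N')$; write~$k=lvl(N)$ and~$k'=lvl(N')$. Suppose for the moment that~$k=k'$. Then~$\cN^{k-1}(N)=\{M\in\cN(N):lvl(M)\leq k-1\}$ is obtained from~$\cN(N)$ by a filter that depends only on the set~$\cN(N)$ and the number~$k$, and the same holds for~$N'$; hence~$\cN^{k-1}(N)=\cN^{k-1}(N')$ and level-reconstructibility gives~$N\cong N'$. So the whole implication reduces to showing that~$lvl(N)=lvl(N')$ whenever~$\cN(N)=\cN(N')$.

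This last point is the main obstacle. The easy half is that deleting a reticulation edge can never raise the level, so every subnetwork has level at most~$lvl(N)$; in particular, setting~$f=\max\{lvl(M):M\in\cN(N)\}$, a number determined by the common set~$\cN(N)=\cN(N')$, we get~$f\leq k$ and~$f\leq k'$. The hard half is to bound the level from below and pin it down exactly. For networks in which a reticulation can be removed cleanly from a top blob, one edge deletion lowers the level by exactly one, giving~$f\geq k-1$ and hence~$k,k'\in\{f,f+1\}$ and~$|k-k'|\leq 1$; controlling cascading (invalid) deletions, which may delete several reticulations at once, is already part of what makes this step delicate in full generality. Ruling out the remaining case~$k\neq k'$ is the crux: if, say,~$k'=k+1$, then~$\cN(N')=\cN(N)$ contains no level-$k'$ network, which forces every maximum subnetwork of~$N'$ to drop below level~$k'$ and hence forces all reticulations of~$N'$ to lie in a single blob; one must then show that such a single-blob network cannot share its entire subnetwork family with a network whose reticulations are spread across several blobs. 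I expect this to require comparing the blob structure---not merely the reticulation counts---of the maximum subnetworks of the two networks, since a single biconnected component can split into several blobs after one reticulation-edge deletion, so the obstruction really lives in how reticulations are distributed among biconnected components.
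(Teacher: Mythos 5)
Your proof of the first implication is correct and is essentially the paper's own argument: the paper gives no formal proof of Observation~\ref{obs:reconstructibility}, and the justification it offers is precisely the remark following Definition~\ref{def:maxllsubnetwork} that you invoke, namely that $\cN^{mlls}(N)$ is the set of members of $\cN^{k-1}(N)$ with the maximum number of edges, so the MLLS set is a function of the lower-level set. (Both you and the paper quietly set aside the degenerate case $\cN^{mlls}(N)=\emptyset$, which can only arise for networks containing invalid reticulation edges.)

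The second implication is where your proposal has a genuine gap, and it is one you flag yourself: you reduce everything to the claim that $\cN(N)=\cN(N')$ forces $lvl(N)=lvl(N')$, and you never prove that claim. The deeper problem is that this claim is false for arbitrary binary networks, so the route you sketch (comparing blob structures of maximum subnetworks) cannot be completed without restricting the class. Concretely, on $X=\{x,y\}$ take the reticulated cherry $K(x,y)$, which has level~$1$, and the level-$2$ network $N_2$ with root $\rho$ and edges $(\rho,p)$, $(p,w)$, $(p,r_1)$, $(w,r_1)$, $(w,r_2)$, $(r_1,r_2)$, $(r_2,v)$, $(v,x)$, $(v,y)$, where $r_1,r_2$ are reticulations. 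In $N_2$ the deletion of any reticulation edge triggers a cascade of cleaning up (parallel edges keep forming), destroying both reticulations, so every proper displayed network of $N_2$ is the cherry tree on $\{x,y\}$; hence $\cN(N_2)=\cN(K(x,y))$ although the levels differ. This example also shows why the statement itself is delicate for fully general classes: under the paper's literal definition of level-reconstructibility, which only constrains pairs of networks of the \emph{same} level, the class $\{N_2,K(x,y)\}$ is vacuously level-reconstructible yet not subnetwork-reconstructible. The paper never runs into this because it reads the observation as bookkeeping (each finer collection of building blocks is recoverable from the coarser one once the two networks being compared have a common level $k$), because it restricts attention to tree-child networks immediately after the observation --- for which such cascades are impossible by Lemma~\ref{lem:DeletingRetEdgeTCValid} --- and because it handles the cross-level comparison separately, in the unnumbered observation preceding the corollary at the end of Section~\ref{sec:TCReconstruction}, which asserts that tree-child networks of different levels (both of level at least~$2$ or girth at least~$4$) have different lower-level subnetwork sets. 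So the correct repair is not to try harder on your missing lemma, which is unprovable in general, but either to prove it for the restricted class at hand or to apply level-reconstructibility only to pairs of equal level, as the paper implicitly does.
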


We will henceforth assume that all considered networks are binary tree-child networks on a non-empty set of taxa~$X$, unless stated otherwise.

\section{Blob Trees}\label{sec:BlobTree}

In this section we show how to reconstruct a \emph{blob tree}, the underlying tree of a network.
The tree has a similar construction as the `blobbed trees' in~\cite{gusfield2005fundamental} with further modifications.

\begin{definition}
 The \emph{blob tree} of a network~$N$, denoted~$BT(N)$, is the labelled tree obtained by applying the following:
 \begin{enumerate}
  \item contract every blob into a single node, and label each node, except for the root node, by the leaf-descendant set of the top node of the blob;
  \item delete all leaf nodes.
 \end{enumerate}
 We call the nodes in~$BT(N)$ \emph{blob nodes}.
\end{definition}

An example of a blob tree is illustrated in Figure~\ref{fig:BlobTreeExample}.

\begin{figure}[h]
 \centering
 \includegraphics[width = \textwidth]{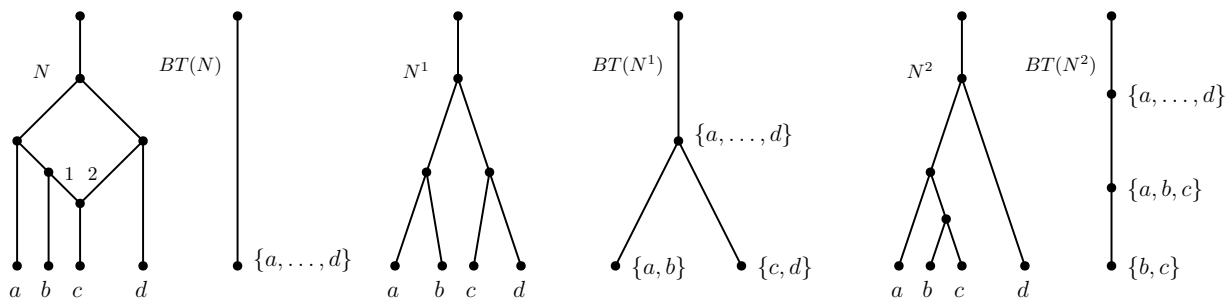}
 \caption{A tree-child network~$N$, its maximum subnetworks~$N^1, N^2$ obtained from deleting edges~1 and~2 respectively, together with their blob trees.}
 \label{fig:BlobTreeExample}
\end{figure}

We refer to the top nodes of blobs as \emph{pure nodes}.
In the case of a level-0 blob, this top node is simply the tree node itself.
Let~$x$ denote the pure node of some blob~$B$ of some network~$N$.
Then~$desc_N(x) = desc_N(B)$ denotes the set of leaf-descendants of~$x$ (and thus~$B$) in~$N$.

For a general network~$N$, it is possible for~$BT(N)$ to contain two nodes with the same label if there is a blob in~$N$ of indegree-1 and outdegree-1.
However the same cannot occur in tree-child networks, due to the following lemma.

\begin{lemma}\label{lem:TCLeafDescSet}
 Let~$N$ be a tree-child network on~$X$, let~$A\subseteq X$ and let~$x$ be a highest tree node with~$desc_N(x) = A$.
 If a tree node~$y\neq x$ also has~$desc_N(y) = A$ then one child of~$x$ is a reticulation~$r$ such that~$y$ is below~$x$ and~$y$ is above~$r$. Hence,~$x$ is the unique highest tree node with~$desc_N(x) = A$ and all other tree nodes~$y$ with ~$desc_N(y) = A$ are in the same blob.
\end{lemma}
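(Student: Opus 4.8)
The plan is to prove the two assertions of Lemma~\ref{lem:TCLeafDescSet} in turn. First I would establish the structural claim: if $y \neq x$ is a tree node with $desc_N(y) = A$, then some child of $x$ is a reticulation $r$ with $y$ below $x$ and above $r$. After that, the uniqueness of $x$ as the highest such tree node and the fact that all other such $y$ lie in the same blob as $x$ will follow without much extra work.

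For the structural claim, I would argue as follows. Since $x$ is a \emph{highest} tree node with $desc_N(x) = A$, any other tree node $y$ with $desc_N(y) = A$ cannot be strictly above $x$; and $y$ and $x$ cannot be incomparable, because then $desc_N(x)$ and $desc_N(y)$ would be obtained from disjoint subnetworks hanging off incomparable nodes, forcing them to differ (each would contain a leaf reachable only through one of the two nodes, using the tree-child property that every node has a tree path to a leaf). Hence $y$ is strictly below $x$. Now $x$ is a tree node, so it has two children $c_1, c_2$ whose descendant leaf-sets together give $A = desc_N(x)$. I would trace which child $y$ descends from, say $c_1$. The key point is that $desc_N(y) = A = desc_N(x) = desc_N(c_1) \cup desc_N(c_2)$, yet $desc_N(y) \subseteq desc_N(c_1)$; this forces $desc_N(c_2) \subseteq desc_N(c_1)$. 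The only way a child $c_2$ of a tree node can have its entire descendant leaf-set contained in that of its sibling is if $c_2$ is a reticulation whose descendants are "reclaimed" on the $c_1$ side of the blob — so I would set $r = c_2$ and check that every leaf below $r$ is also below $c_1$, which says precisely that $r$ is a reticulation sitting inside the same blob, with $y$ (on the $c_1$ side) above it. Formalizing why $c_2$ must be a reticulation rather than a tree node is where the tree-child hypothesis does real work: if $c_2$ were a tree node or a leaf, it would have a tree path to a leaf $\ell$ that is not below any reticulation on that path, and such an $\ell$ could not also lie below $c_1$, contradicting $desc_N(c_2)\subseteq desc_N(c_1)$.

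I expect the main obstacle to be the last step just described: pinning down exactly why $r$ must be a reticulation and why $y$ must lie above $r$, i.e. ruling out configurations where the containment $desc_N(c_2)\subseteq desc_N(c_1)$ is achieved some other way. The clean tool here is the tree-child property in the form already noted in the preliminaries, that every node admits a tree path to a leaf; a tree path to a leaf $\ell$ from a node $z$ certifies that $\ell$ is a descendant of $z$ but is \emph{not} shared with an incomparable subnetwork, so comparing such certifying leaves on the two sides of $x$ will drive every containment and disjointness argument.

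Finally, for the concluding sentence: uniqueness of the highest tree node $x$ with $desc_N(x)=A$ follows because any two highest such nodes would be incomparable (neither above the other, by maximality), which we have already ruled out. And every tree node $y$ with $desc_N(y)=A$ lies below $x$ but above the reticulation $r$ that is a child of $x$; since $x$, $y$, and $r$ together with the two internally disjoint paths from $x$ to $r$ (one through $c_1$ passing $y$, one the direct edge $(x,r)$) form a cycle in the underlying undirected graph, all these nodes lie in a common biconnected component, i.e. the same blob. I would phrase this blob-membership argument in terms of the definition of biconnected component from the preliminaries, noting that $x$ and $r$ are joined by two internally vertex-disjoint paths and hence cannot be separated by deleting a single cut-node.
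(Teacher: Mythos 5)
Your plan is correct and takes essentially the same route as the paper's proof: both rest on the tree-child certificate (a tree path from each node to a leaf) to force $y$ to be comparable to, hence strictly below, $x$, to force one child of $x$ to be a reticulation $r$, and to place $y$ above $r$, with uniqueness and the common-blob claim following as you describe. The one step you flag as the main obstacle --- ruling out that $y$ lies strictly below $r$ --- closes with exactly the tool you name plus acyclicity: $y$ must still reach the certifying leaf of the sibling of $r$, and any such path must pass through $x$ (since that leaf's tree path forces all of its ancestors below $x$ onto that reticulation-free path), producing a directed cycle; this is precisely how the paper finishes.
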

\begin{proof} Let $y\neq x$ be a tree node with~$desc_N(y) = A$.
 To begin, note that~$y$ must be either above or below~$x$. To see this, note that by the tree-child property of~$N$, there exists a leaf~$l$ that is reached by~$x$ via a tree path.
 Then for~$y$ to be an ancestor of~$l$,~$y$ must be either above or below~$x$.
 Hence,~$x$ is the unique highest tree node with~$desc_N(x) = A$ and~$y$ is below~$x$. 
 
 By the tree-child property of~$N$, either~$x$ can have two children that are tree nodes or leaves, or~$x$ can have one tree node or leaf child and one reticulation child.
 Let~$c_1,c_2$ denote the children of~$x$, and by the tree-child property of~$N$, there exist leaves~$l_1, l_2$ that are reached by~$c_1,c_2$ via tree paths respectively.
 
 First suppose that the two children~$c_1,c_2$ of~$x$ are tree nodes or leaves.
 Then for~$y$ to be an ancestor of both~$l_1$ and~$l_2$,~$y$ must be an ancestor of both~$c_1$ and~$c_2$, contradicting that~$y\neq x$ is below~$x$.
 
 Hence, one of the two children of~$x$, is a reticulation~$r$. Without loss of generality,~$r=c_1$. It remains to show that~$y$ is above~$c_1$. Since~$y$ is an ancestor of~$l_1$, and there is a tree path from~$c_1$ to~$l_1$, node~$y$ is either above or below~$c_1$. Suppose for contradiction that~$y$ is below~$c_1$. Since~$y$ is also an ancestor of~$l_2$, there exists a directed path from~$y$ to~$l_2$. This path must pass through~$x$ since the path from~$x$ to~$l_2$ is a tree path. This is a directed path from~$y$ to~$x$. However, since there is also a directed path from~$x$ to~$y$ (via~$c_1$), and~$y\neq x$, it follows that there exists a directed cycle, a contradiction.
\end{proof}


The following corollary follows immediately from Lemma~\ref{lem:TCLeafDescSet}.

\begin{corollary}\label{cor:BTUniqueLabel}
 Let~$N$ be a tree-child network.
 Then its blob tree~$BT(N)$ contains nodes with unique labels.
\end{corollary}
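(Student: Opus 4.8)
The plan is to derive Corollary~\ref{cor:BTUniqueLabel} directly from Lemma~\ref{lem:TCLeafDescSet}, since the corollary is essentially a restatement of the uniqueness conclusion already established there, lifted from pure nodes to blob nodes. First I would recall the construction of~$BT(N)$: each blob~$B$ is contracted to a single node labelled by~$desc_N(x)$, where~$x$ is the pure (top) node of~$B$, and then all leaves are deleted. So the labels appearing in~$BT(N)$ are exactly the sets~$desc_N(x)$ as~$x$ ranges over the pure nodes of the blobs of~$N$. Two blob nodes therefore share a label precisely when two distinct pure nodes~$x \neq x'$ satisfy~$desc_N(x) = desc_N(x')$.

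The key step is to rule this out. Suppose for contradiction that~$BT(N)$ has two blob nodes with the same label~$A \subseteq X$. Then there are two distinct pure nodes~$x, x'$ with~$desc_N(x) = desc_N(x') = A$. Both~$x$ and~$x'$ are tree nodes, since the top node of any blob is a tree node (in a binary tree-child network the highest node of a blob has indegree~$1$ and outdegree~$2$). Let~$x$ be a highest tree node with~$desc_N(x) = A$. By Lemma~\ref{lem:TCLeafDescSet} applied to~$x$ and~$y := x'$, node~$x'$ lies below~$x$ and both lie in the same blob~$B$. But then~$x$ and~$x'$ are pure nodes of the \emph{same} blob~$B$, which is impossible: a blob has a unique top node, so it is contracted to a single blob node carrying a single label. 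This contradiction shows no two blob nodes share a label.

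I would then conclude that all labels in~$BT(N)$ are distinct, which is exactly the statement of the corollary. The argument is short because Lemma~\ref{lem:TCLeafDescSet} has done the real work of showing that any two tree nodes with the same leaf-descendant set must reside in a common blob; the only additional observation needed is the translation between ``tree nodes with equal descendant sets'' and ``blob nodes with equal labels,'' together with the fact that each blob contributes exactly one label via its unique top node.

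The main obstacle, modest as it is, lies in cleanly handling the level-$0$ case and the identification of pure nodes: I must make sure that when a blob is a single tree node (a level-$0$ blob), its ``top node'' is that tree node itself, so the labels really are of the form~$desc_N(x)$ for tree nodes~$x$, matching the hypothesis of Lemma~\ref{lem:TCLeafDescSet}. I would also want to confirm that two distinct pure nodes cannot belong to the same blob --- this is immediate from the fact that a blob is contracted to exactly one node and has a well-defined unique top node, but it is the linchpin that converts Lemma~\ref{lem:TCLeafDescSet}'s ``same blob'' conclusion into the desired contradiction.
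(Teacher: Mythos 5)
Your proposal is correct and takes essentially the same approach as the paper: the paper's entire justification is that the corollary ``follows immediately'' from Lemma~\ref{lem:TCLeafDescSet}, and your write-up simply makes that derivation explicit (the same-blob conclusion of the lemma combined with the fact that each blob contributes exactly one label via its unique top node). One minor slip worth tidying: once you redefine~$x$ as \emph{a highest tree node} with~$desc_N(x)=A$, it need not be one of the two pure nodes anymore, but the contradiction survives, since~$x'$ by itself is then a pure node lying strictly inside the blob produced by Lemma~\ref{lem:TCLeafDescSet} (between the highest node and its reticulation child), which is precisely what your ``linchpin'' observation rules out.
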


Due to this, we identify blob nodes by their node labels, e.g., for a blob~$B$ in~$N$ with~$desc_N(B) = A$, the corresponding blob node in~$BT(N)$ is~$A$.

\subsection{On Reticulated Cherries}\label{subsec:RetCherries}
\review{Let~$x$ and~$y$ be two non-reticulation nodes in a network~$N$.
Let~$p_x,p_y$ be the parents of~$x,y$ respectively, where~$p_y$ is a reticulation and~$p_x$ is a parent of~$p_y$.
Let~$g_y$ denote the parent of~$p_y$ that is not~$p_x$ (see Figure~\ref{fig:RetCherryIsolateBT} (a)).
We call the subgraph of~$N$ induced by the nodes~$x,y,p_x,p_y,$ and~$g_y$ a \emph{reticulated cherry shape}.
We will refer to the reticulated cherry shape as~\review{$\langle x,y\rangle$} and say that the nodes~$x$ and~$y$ \emph{form} the reticulated cherry shape.
In this case we say that the reticulation is on~$y$ and that the reticulation~$p_y$ is in the reticulated cherry shape~\review{$\langle x,y\rangle$}.
}
This notion is a generalization of the reticulated cherries defined by Bordewich et al. in~\cite{bordewich2018recovering}, in which both~$x$ and~$y$ are leaves.


\begin{lemma}\label{lem:blobseesretcherryshape}
 In a tree-child network, all reticulations are in a \emph{reticulated cherry shape}.
 Moreover, for~$k\geq 1$, there is at least one reticulation in each level-$k$ blob that is in a reticulated cherry shape formed by two nodes outside of the blob.
\end{lemma}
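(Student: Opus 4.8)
The plan is to prove the two assertions separately, handling the global statement about all reticulations first and then the finer statement about each level-$k$ blob.

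For the first part I would take an arbitrary reticulation $r$ and directly exhibit the reticulated cherry shape it sits in. Let $y$ be the unique child of $r$; since $r$ is a non-leaf node, the tree-child property forces $y$ to be a tree node or a leaf, i.e.\ a non-reticulation. I would then argue that both parents of $r$ are tree nodes: a parent of $r$ cannot be a reticulation or the root, since its only child would then be the reticulation $r$, contradicting tree-child. Fixing one parent $p_x$ of $r$, its second child $x$ must also be a non-reticulation, again because $p_x$ has $r$ as a reticulation child and tree-child forces its other child $x$ to be a tree node or leaf. Taking $g_y$ to be the remaining parent of $r$, the nodes $x,y,p_x,r,g_y$ form exactly a reticulated cherry shape $\langle x,y\rangle$ on $r$.

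For the second part I need, for each level-$k$ blob $B$ with $k\ge 1$ (so that $B$ is a genuine biconnected component with at least three nodes), to find a reticulation of $B$ whose reticulated cherry shape is formed by two nodes lying \emph{outside} $B$. The structural facts I would use are: (i) every vertex of $B$ has degree at least two in the underlying graph of $B$, since a biconnected graph on at least three vertices has minimum degree two; and (ii) both incoming edges, hence both parents, of any reticulation of $B$ lie in $B$. The crucial point is the choice of reticulation. I would \emph{not} pick a lowest reticulation of $B$; instead I would pick a lowest tree node $t$ of $B$, minimal among tree nodes of $B$ with respect to the ancestor order (such a node exists, as the parents of any reticulation of $B$ are tree nodes of $B$). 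By (i), $t$ has at least one child inside $B$; this child cannot be a tree node, by minimality of $t$, nor a leaf, since leaves are not in blobs, so it is a reticulation $r$. The other child $x$ of $t$ is a non-reticulation by tree-child, and it cannot lie in $B$, for otherwise it would be a tree node of $B$ strictly below $t$, contradicting minimality. Likewise the child $y$ of $r$ is a non-reticulation that cannot lie in $B$, by the same minimality argument. Taking $g_y$ to be the second parent of $r$, which lies in $B$ by (ii), the nodes $x$ and $y$ form a reticulated cherry shape $\langle x,y\rangle$ on the reticulation $r\in B$ with both $x$ and $y$ outside $B$.

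The first part is routine. The main obstacle is locating the right reticulation in the second part: a minimal reticulation guarantees only that its \emph{child} leaves the blob, while one also needs a parent whose \emph{other} child leaves the blob, and these two requirements need not be satisfied by the same reticulation (one can build a level-$2$ blob in which a minimal reticulation has both of its siblings inside the blob). The resolution is to impose extremality on tree nodes rather than on reticulations, which simultaneously forces the sibling $x$ and the child $y$ out of the blob. Accordingly, I would be most careful in verifying the two auxiliary facts (minimum degree two in the blob, and that both parents of an in-blob reticulation lie in the blob), since the entire choice of $t$ rests on them.
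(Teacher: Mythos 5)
Your proof is correct and takes essentially the same route as the paper's: the first part forces the child and sibling of an arbitrary reticulation to be non-reticulations via the tree-child property, and the second part makes exactly the paper's key choice of a \emph{lowest tree node} of the blob, deducing that one of its children is an in-blob reticulation~$r$ while the sibling~$x$ and the child~$y$ of~$r$ must lie outside the blob by minimality. Your two auxiliary facts (minimum degree two inside a biconnected component, and both parents of an in-blob reticulation lying in the blob) are sound and are implicitly used in the paper's terser argument.
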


\begin{proof}
 Let~$N$ be a tree-child network and consider a reticulation~$r$ in an arbitrarily chosen blob~$B$.
 By the tree-child property,~$r$ must have a non-reticulation child~$y$ and two tree node parents~$t_1,t_2$.
 The child of~$t_1$ that is not~$r$ must be a non-reticulation~$x$.
 Then~$r$ is in a reticulated cherry shape formed by~$x$ and~$y$.
 
 Now consider a lowest tree node~$a$ in~$B$.
 If both children of~$a$ were to be non-reticulations then at least one of the children would also be contained in~$B$, contradicting our choice of~$a$.
 If both children of~$a$ were to be reticulations then the network would no longer be tree-child, a contradiction.
 Thus, one child of~$a$ is a reticulation, say~$c$, and the other a non-reticulation, say~$x$.
 The child of~$c$, say~$y$, must be a non-reticulation as the network is tree-child, and thus~$B$ contains a reticulated cherry shape formed by two nodes~$x,y$. Moreover,~$x$ and~$y$ are outside of~$B$ because they are either leaves or tree nodes, and below a lowest tree node in~$B$.
\end{proof}

A reticulated cherry shape~\review{$\langle x,y\rangle$} is called a \emph{lowest reticulated cherry shape of a blob~$B$}, if the parent~$p_x$ of~$x$ is a lowest tree node of~$B$. This implies that~$x$ and~$y$ are not contained in~$B$, as shown in the proof of Lemma~\ref{lem:blobseesretcherryshape}.

Suppose we are given a reticulated cherry shape~\review{$\langle x,y\rangle$} with the reticulation on~$y$ and let~$g_y$ be the parent of~$p_y$ that is not~$p_x$.
We use the following operations defined by Bordewich et al.~\cite{bordewich2018recovering}.
\begin{itemize}
 \item \emph{cutting}~\review{$\langle x,y\rangle$} is the operation of deleting~$(p_x,p_y)$ and suppressing~$p_x$ and~$p_y$.
 \item \emph{isolating}~\review{$\langle x,y\rangle$} is the operation of deleting~$(g_y,p_y)$ and suppressing~$g_y$ and~$p_y$.
\end{itemize}

Let~$N'$ be a maximum subnetwork of a tree-child network~$N$ obtained by isolating a lowest reticulated cherry shape of a blob~$B$.
Then there is a pure node in~$N'$ that is not a pure node in~$N$ (Figure~\ref{fig:RetCherryIsolateBT}). Moreover, if blob~$B$ is of level at least~2, the leaf-descendant set of the new pure node is not equal to the leaf-descendant set of any node in~$N$.
This leads to the following observation.

\begin{observation}\label{obs:DiffBlobTreeAfterDeletion}
 For a tree-child network~$N$ and~$B$ a level-$k$ blob, with~$k\geq 2$, there is always a reticulation edge we can delete from~$B$ such that the blob tree of the resulting subnetwork is not equal to~$BT(N)$.
\end{observation}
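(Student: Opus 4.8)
The plan is to exhibit a single reticulation edge of $B$ whose deletion visibly changes the blob tree, and I would obtain it by isolating a lowest reticulated cherry shape. First I would invoke Lemma~\ref{lem:blobseesretcherryshape}: since $B$ has level $k\ge 2\ge 1$, it contains a lowest reticulated cherry shape $\langle x,y\rangle$, where $p_x$ is a lowest tree node of $B$, $p_y$ is the reticulation (with $p_y\to y$), and $x,y$ lie outside $B$. Because $p_y$ is a reticulation of the blob, both of its incoming arcs lie in $B$; in particular the arc $(g_y,p_y)$ isolated in the cherry shape is a reticulation edge of $B$, and it is valid by Lemma~\ref{lem:DeletingRetEdgeTCValid}. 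Let $N'$ be the maximum subnetwork obtained by deleting $(g_y,p_y)$ and cleaning up; $N'$ is tree-child by Lemma~\ref{lem:RetEdgeDelNetTC}. After suppression, $p_x$ acquires the two children $x$ and $y$, both of which hang in pendant subnetworks below the old blob, so no cycle passes through $p_x$ in $N'$ and $p_x$ becomes the top of a level-$0$ blob, i.e.\ a new pure node.

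The cleanest way I see to finish is to compare the numbers of blob nodes. The deletion is self-contained (by Lemma~\ref{lem:DeletingRetEdgeTCValid} it removes exactly the two endpoints and three edges), so every blob of $N$ disjoint from $B$ survives unchanged in $N'$ and the only change is localized to the $B$-region. In $BT(N)$ the whole of $B$ contributes a single blob node. In $N'$, on the one hand $p_x$ splits off as its own blob node; on the other hand the deletion destroys exactly one reticulation, the suppressed $p_y$, leaving $k-1\ge 1$ reticulations in the $B$-region, each of which still has indegree~$2$ and hence still lies in a biconnected component. Thus the $B$-region contributes at least two blob nodes to $BT(N')$. Since all remaining blob nodes coincide, $BT(N')$ has strictly more nodes than $BT(N)$, so the two labelled trees cannot be isomorphic; in particular $BT(N')\neq BT(N)$, and $(g_y,p_y)$ is the desired edge.

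The step requiring the most care is the third one, namely guaranteeing that $p_x$ really becomes a separate blob node while a genuine blob persists, and this is exactly where $k\ge 2$ enters: for $k=1$ the gadget is essentially the whole blob and isolating it leaves no reticulation, so the blob tree need not change. I would mention, but avoid relying on, the more direct route suggested by the surrounding text, namely arguing that the new pure node $p_x$ carries the leaf-descendant set $A=desc_N(x)\cup desc_N(y)$ as a label that is new to $BT(N')$: by Lemma~\ref{lem:TCLeafDescSet} and Corollary~\ref{cor:BTUniqueLabel} the only pure node of $N$ that could carry the label $A$ is the top $t$ of $B$, so one would still need $desc_N(t)\supsetneq A$. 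The obstacle here is that pendant subnetworks hanging below a blob may share leaves through reticulations situated below $B$, so one cannot simply declare the descendant sets below distinct blob-exits disjoint; making $desc_N(t)\supsetneq A$ rigorous would require tracing the tree paths guaranteed by the tree-child property from the second reticulation's child and showing that otherwise $B$ would collapse to a single cycle. The blob-node count argument sidesteps this subtlety entirely, which is why I would adopt it.
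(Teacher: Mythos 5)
Your proof is correct, and it selects the same edge as the paper does: both isolate a lowest reticulated cherry shape $\langle x,y\rangle$ of $B$ supplied by Lemma~\ref{lem:blobseesretcherryshape}. Where you genuinely diverge is in the certificate that $BT(N')\neq BT(N)$. The paper's justification (given only in the text preceding the observation) is label-based: after isolation, the new pure node $p_x$ carries a leaf-descendant set that is not a blob-node label of $BT(N)$ when $B$ has level at least~$2$; making that rigorous requires Lemma~\ref{lem:TCLeafDescSet}, Corollary~\ref{cor:BTUniqueLabel}, and the further fact that the top of $B$ has a strictly larger leaf-descendant set than $p_x$. Your certificate is instead a node count: the deletion is valid and creates no new cycles, so every blob other than $B$ survives unchanged, while the remnant of $B$ contributes at least two blobs to $N'$ --- the level-$0$ blob $p_x$ (its two child edges are cut edges because the subnetworks below $x$ and $y$ are pendant) and at least one biconnected component containing one of the $k-1\geq 1$ surviving reticulations --- hence $BT(N')$ has strictly more nodes than $BT(N)$. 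This finish is more elementary, bypasses the label-uniqueness machinery entirely, and makes the role of $k\geq 2$ transparent (for $k=1$ no reticulation survives and the count need not change, cf.\ the triangles of Figure~\ref{fig:Girth3and4}). Your terse steps do hold: a cycle through $(p_x,x)$ or $(p_y,y)$ would use a second edge lying in $B$ and contradict maximality of $B$, so these are cut edges; and cycles of $N'$ lift to cycles of $N$, so the blobs of $N'$ containing surviving reticulations of $B$ are remnants of $B$ rather than old blobs, which is what makes the count legitimate. One correction to your closing remark: the obstacle you attribute to the paper's route is not real, since pendant subnetworks hanging below distinct cut edges of a blob can never share leaves (a shared leaf would place a cut edge on a cycle). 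The genuine missing step in the label argument is rather that $desc_N(t)\supsetneq A$ for the top $t$ of $B$, which follows because a level-$k$ tree-child blob with $k\geq 2$ has at least $k+1\geq 3$ outgoing cut edges, of which only two can lead into $A$; since your argument never relies on the disjointness worry, this slip does not affect your proof.
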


\begin{figure}[h!]
 \centering
 \includegraphics[scale = 0.5]{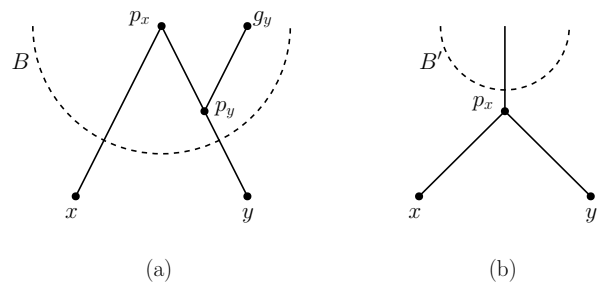}
 \caption{(a) A portion of the network showing a lowest reticulated cherry shape in a blob~$B$.
 (b) The same portion of the network after isolating the reticulate cherry shape~\review{$\langle x,y\rangle$}. Note here that~$p_x$ is a pure node in the subnetwork, but~$p_x$ is not a pure node in the original network.
 }
 \label{fig:RetCherryIsolateBT}
\end{figure}

Now suppose~$x$ and~$y$ are both leaves.
If~$x$ and~$y$ share a common parent, then they form a \emph{cherry}.
If~$x$ and~$y$ forms a reticulated cherry shape then it is a \emph{reticulated cherry}.
The following Lemma from Bordewich and Semple~\cite{bordewich2016determining} is essential for our results and will be used extensively throughout the text.

\begin{lemma}[\cite{bordewich2016determining}]\label{lem:BordewichLemma}
 If~$N$ is a tree-child network on at least two leaves, then~$N$ contains either a cherry or a reticulated cherry.
\end{lemma}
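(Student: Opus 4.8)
The plan is to descend to the bottom of the network and examine a lowest tree node. First I would confirm that $N$ has at least one tree node: summing in- and out-degrees over the four node categories of Definition~\ref{def:PhylNetwork} yields the standard identity $t = |X| + r - 1$, where $t$ and $r$ denote the numbers of tree nodes and reticulations, so $|X|\geq 2$ forces $t\geq 1$. Because $N$ is a finite directed acyclic graph, whenever a tree node has another tree node below it we may descend, and this process must terminate; hence there is a tree node $v$ with no tree node strictly below it.

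Next I would analyse the two children $c_1,c_2$ of $v$. By the choice of $v$ neither child is a tree node, so each is a leaf or a reticulation, and the tree-child property immediately excludes the case that both are reticulations (then $v$ would have no child that is a tree node or a leaf). Two admissible cases remain. If both $c_1,c_2$ are leaves, they share the common parent $v$ and therefore form a \emph{cherry}. If instead one child, say $c_1=x$, is a leaf and the other, $c_2$, is a reticulation, then I would apply the tree-child property to $c_2$: its unique child must be a tree node or a leaf, but it lies below $v$ and so cannot be a tree node, whence it is a leaf $y$. Now $v$ is the parent $p_x$ of $x$ and also a parent of the reticulation $c_2=p_y$ whose child is $y$; by the definition in Section~\ref{subsec:RetCherries} this means $x,y$ form a \emph{reticulated cherry}. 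In either admissible case $N$ contains a cherry or a reticulated cherry.

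I do not expect any single deduction to be hard; the delicate point is pinning down the right notion of ``lowest'' so that the descent is guaranteed to stop at a tree node, and then recognising that the tree-child property must be invoked at exactly the two places where the local structure could branch back upward into tree nodes. It is tree-childness that rules out the ``two reticulation children'' configuration and that forces the reticulation's child down to a leaf rather than another tree node, so the entire argument hinges on these two applications of the hypothesis.
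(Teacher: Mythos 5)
Your proof is correct. Note, however, that the paper does not prove this statement at all: it is imported by citation from Bordewich and Semple~\cite{bordewich2016determining}, so there is no in-paper argument to compare yours against. Your argument is a valid self-contained one: the degree count (sum of outdegrees $1+2t+r$ equals sum of indegrees $|X|+t+2r$) indeed gives $t = |X|+r-1 \geq 1$; a lowest tree node $v$ exists by finiteness and acyclicity; and your case analysis on its two children is exhaustive and invokes tree-childness at exactly the two necessary points (excluding two reticulation children, and forcing the reticulation's child to be a leaf rather than a tree node below $v$). In the one-leaf-one-reticulation case the configuration matches the paper's definition of a reticulated cherry: $p_x = v$ is a parent of the reticulation $p_y = c_2$ whose child is the leaf $y$, and the second parent $g_y$ of $p_y$ exists and differs from $v$ because $v$'s other outgoing edge goes to the leaf $x$ (a detail you could have made explicit, but which is forced by the indegree constraints). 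It is worth observing that your technique is precisely the one the paper itself uses to prove its Lemma~\ref{lem:blobseesretcherryshape} (take a lowest tree node and analyze its children), so your proposal is fully consistent in spirit with the paper's methods even though the lemma itself is only cited there.
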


\subsection{Reconstructing the Blob Tree of a Tree-child network}\label{subsec:BlobTreeReconstruction}

\begin{lemma}\label{lem:BT(N)ContainsAThenBT(N')AlsoDoes}
 For a valid network~$N$, if the blob tree~$BT(N)$ contains a blob node~$A$ then~$BT(N')$ contains the blob node~$A$ for \review{every} maximum subnetwork~$N'$ of~$N$.
\end{lemma}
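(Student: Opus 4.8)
The plan is to exhibit, for an arbitrary maximum subnetwork~$N'$, a pure node of~$N'$ whose leaf-descendant set is exactly~$A$; since every non-root blob node of~$BT(N')$ is labelled by the leaf-descendant set of the pure (top) node of its blob, this immediately produces the blob node~$A$ in~$BT(N')$. Let~$B$ be the blob of~$N$ whose pure node~$x$ satisfies~$desc_N(x)=A$, and recall that the edge entering a non-root pure node is a cut-edge; write~$(p,x)$ for this cut-edge. Let~$e=(u,v)$ be the reticulation edge whose deletion yields~$N'$, with~$v$ the reticulation and~$g$ the second parent of~$v$. Because~$N$ is valid, cleaning up after deleting~$e$ suppresses only~$u$ and~$v$ and alters no other part of the graph; in particular every blob other than the one containing~$v$ is left structurally untouched. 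I would then track the fate of the cut-edge~$(p,x)$.

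Two structural facts drive the argument. First, deleting~$e$ can only shrink leaf-descendant sets, and a leaf~$\ell$ can leave~$desc_N(x)$ only if every directed path from~$x$ to~$\ell$ uses~$e$, which forces~$x$ to lie above~$u$ but not above~$g$ (otherwise the route through~$g$ would avoid~$e$). But~$u$ and~$g$ are the two parents of the single reticulation~$v$, so they lie in a common blob; since~$x$ is a pure node, its being above~$u$ forces it to be an ancestor of (or equal to) the pure node of that blob, hence above~$g$ as well---a contradiction. Consequently~$desc_{N'}(x)=A$ whenever~$x$ itself is not suppressed. Second, deleting an edge never turns a cut-edge into a non-cut-edge, and suppressing a node of indegree~$1$ and outdegree~$1$ preserves cut-edges, so cut-edges of~$N$ survive (possibly after an endpoint is suppressed) as cut-edges of~$N'$.

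With these in hand I would split on whether~$x$ is suppressed. Since a pure node cannot be the reticulation~$v$, the only way~$x$ is suppressed is~$x=u$. If instead~$x$ survives ($x\neq u$), then a cut-edge enters~$x$ in~$N'$: either~$(p,x)$ persists unchanged, or a suppressed endpoint~$p\in\{u,v\}$ is replaced by its surviving neighbour; in every such case~$desc_{N'}(x)=A$ by the first fact, so~$x$ is the required pure node. The remaining case is~$x=u$, where the pure node of~$B$ is exactly the suppressed tree node. Here I pass to the other child~$w$ of~$u$. As~$e$ is a reticulation edge it lies on a cycle, so~$u,v,g$ share the blob~$B$, and~$g$ is a proper descendant of~$u=x$; the directed path from~$u$ to~$g$ cannot begin with~$(u,v)$ (as~$g$ is an ancestor of~$v$), so it begins with~$(u,w)$ and~$g$ lies below~$w$. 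Hence the leaf-descendants of~$v$ already lie below~$w$ in~$N$, giving~$desc_N(w)=desc_N(u)=A$, and this is preserved in~$N'$ because~$v$ is reconnected through~$g$, which is below~$w$. After suppressing~$u$ the cut-edge~$(p,x)$ becomes a cut-edge into~$w$, so~$w$ is a pure node of~$N'$ with~$desc_{N'}(w)=A$.

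The hard part is this last case, and the delicate point inside it is that~$w$ must genuinely be a pure node of~$N'$, i.e.\ not a reticulation. I would rule this out by acyclicity: if~$w$ had a second parent~$g'$ inside~$B$, the path from~$u=x$ to~$g'$ (which exists since~$u$ is the pure node of~$B$) would have to begin with~$(u,v)$ or~$(u,w)$. Beginning with~$(u,w)$ is impossible since~$g'$ is an ancestor of~$w$; beginning with~$(u,v)$ places~$g'$ below~$v$, and then, with~$g$ below~$w$ below~$g'$ below~$v$ and the edge~$g\to v$, one obtains the directed cycle~$v\to\cdots\to g'\to w\to\cdots\to g\to v$, contradicting acyclicity. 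Throughout, validity of~$N$ is exactly what guarantees that deleting~$e$ triggers no cascading clean-up beyond suppressing~$u$ and~$v$, which is what keeps all other blobs---and hence all other candidate labels---intact.
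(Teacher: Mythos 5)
Your proof is correct and takes essentially the same approach as the paper's: both arguments track the pure node~$x$ (the paper's~$t$) with leaf-descendant set~$A$, use validity to confine clean-up to the deleted edge's endpoints, and in the critical case where the deleted edge is the outgoing reticulation edge of~$x$ (your case~$x=u$, the paper's deletion of~$(t,r)$), both show the other child~$w$ (the paper's~$c$) takes over as pure node because any directed path from the blob's top node to the reticulation's second parent must start with the edge into that child, giving~$desc_N(w)=desc_N(x)=A$. The differences are organizational (you split on whether~$x$ is suppressed rather than on the blob's level and which edge is deleted) and a matter of added rigor on your side---e.g., you prove that~$w$ cannot be a reticulation and that cut-edges persist, points the paper assumes implicitly when it says ``let~$c$ be the child of~$t$ that is a tree node'' and asserts that~$t$ or~$c$ remains a pure node.
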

\begin{proof}
 First suppose that~$A$ is a blob node corresponding to a level-0 blob in~$N$.
 The corresponding node~$t$ in~$N$ is not incident to any reticulation edges, so it is not possible to suppress~$t$ via a reticulation edge deletion.
 Note that a reticulation edge deletion from a blob above or below~$t$ would not change the leaf-descendant set of~$t$.
 Hence~$t$ remains a level-0 blob in all maximum subnetworks of~$N$ with leaf-descendant set~$A$.
 Thus~$A$ is a blob node in all~$BT(N')$ for all maximum subnetworks~$N'$ of~$N$.

 Now suppose that~$A$ is a blob node corresponding to a blob of level at least~$1$.
 Suppose~$t$ is the corresponding pure node in~$N$.
 If~$t$ is not incident to a reticulation edge then there is no way of suppressing~$t$ by means of edge deletions and any reticulation edge deletion will not change the leaf-descendant set of~$t$. Hence,~$t$ is a pure node with leaf-descendant set~$A$ in all maximum subnetworks of~$N$.
 If on the other hand there is a reticulation~$r$ with edges~$(t,r),(s,r)$ then let~$c$ be the child of~$t$ that is a tree node (it is possible that~$c=s$).
 Because~$t$ is the top node of the blob, there is a directed path from~$t$ to~$s$, which must include~$c$. Hence there is a directed path from~$c$ to~$s$ and to~$r$. Therefore, we have~$desc_N(r)\subseteq desc_N(c)$.
 So~$desc_N(c) = desc_N(c)\cup desc_N(r) = desc_N(t) = A$.
 We now use the fact that, after a valid edge deletion, only the endpoints of the edge are suppressed in the resultant maximum subnetwork.
 The maximum subnetwork where~$(t,r)$ is deleted contains~$c$ as a pure node, and hence~$A$ is a blob node in its blob tree.
 The maximum subnetwork where~$(s,r)$ is deleted contains~$t$ as a pure node, and hence~$A$ is a blob node in its blob tree.
 The maximum subnetwork where some other reticulation edge is deleted contains~$t$ as a pure node, and hence~$A$ is a blob node in its blob tree.
 Thus~$A$ is a blob node in~$BT(N')$ for all maximum subnetworks~$N'$ of~$N$.\\
\end{proof}

\begin{lemma}\label{lem:BT(N')ContainsAThenBT(N)ContainsA}
 For a valid network~$N$, if~$BT(N')$ contains a blob node~$A$ for \review{every} maximum subnetwork~$N'$ of~$N$ then~$BT(N)$ also contains the blob node~$A$.
\end{lemma}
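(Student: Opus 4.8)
The plan is to prove the contrapositive: assuming that $BT(N)$ does not contain the blob node $A$, I will exhibit a single maximum subnetwork $N'$ of $N$ whose blob tree $BT(N')$ also omits $A$. Throughout I use that $N$ is valid (Lemma~\ref{lem:DeletingRetEdgeTCValid}), so that each maximum subnetwork is obtained by deleting exactly one reticulation edge and suppressing its two endpoints.

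Two structural facts drive the argument. First, a reticulation edge deletion never enlarges a descendant set, and a node $w$ that is \emph{not} a pure node of $N$ but becomes a pure node of $N'$ (the top of a newly created sub-blob, as in the isolation of a reticulated cherry shape in Figure~\ref{fig:RetCherryIsolateBT}) retains its descendant set, i.e. $desc_{N'}(w)=desc_N(w)$; this is because the deleted edge lies off every $w$-to-leaf path and is merely rerouted below $w$. Combining this with the observation that a deletion inside a blob $B_e$ leaves the pure nodes and the descendant sets of the tops of all other blobs untouched, I obtain the key reduction: if $N'$ is obtained by deleting an edge of a blob $B_e$, then $A\in BT(N')$ if and only if some node $w$ of $N$ with $desc_N(w)=A$ is turned into a pure node of $N'$. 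Second, by Lemma~\ref{lem:TCLeafDescSet} all tree nodes of $N$ with descendant set $A$ — and hence, after passing to the tree node that is the child of a reticulation with descendant set $A$ — lie inside one single blob $B$.

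With these in hand I first dispose of the easy situation. Suppose $N$ has a reticulation edge $e$ lying outside $B$. Deleting $e$ does not split $B$ and creates new pure nodes only inside its own blob $B_e\neq B$; by the first fact their descendant sets equal their $N$-descendant sets, which differ from $A$ because every node of descendant set $A$ lives in $B$. By the key reduction, $A\notin BT(N')$ and we are done. It therefore remains to treat the case in which $B$ is the unique non-trivial blob of $N$, so that every candidate deletion takes place inside $B$.

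In this remaining case let $x$ be the unique highest tree node with $desc_N(x)=A$; since $A$ is not a blob node, $x$ is not the top of $B$, and by Lemma~\ref{lem:TCLeafDescSet} one child of $x$ is a reticulation $r$. I delete a reticulation edge incident to $r$: this suppresses $x$, so $x$ itself cannot become a pure node, and the child $c$ of $r$ satisfies $desc_N(c)=desc_N(r)\subsetneq A$, so promoting $c$ does not recreate $A$ either. The only remaining danger is that some other descendant-set-$A$ node $y$ — which by Lemma~\ref{lem:TCLeafDescSet} must lie below $x$ and above $r$ — is promoted to a pure node by this deletion. I expect this to be the main obstacle: ruling it out requires a finer analysis of the two reticulation edges $(x,r)$ and $(s,r)$ entering $r$ (where $s$ is the other parent of $r$) and, if necessary, repeating the argument at a lowest descendant-set-$A$ node, so as to guarantee that a suitable one of these deletions leaves every node of descendant set $A$ strictly inside a surviving blob. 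Once this is established, no node of descendant set $A$ becomes pure, so by the key reduction $A\notin BT(N')$, which completes the contrapositive.
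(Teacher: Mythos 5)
Your proposal has a fatal gap at its foundation: the structural fact you rely on---that a node~$w$ which is not a pure node of~$N$ but becomes a pure node of a maximum subnetwork~$N'$ satisfies $desc_{N'}(w)=desc_N(w)$---is false, and with it your ``key reduction.'' Consider the tree-child (hence valid, by Lemma~\ref{lem:DeletingRetEdgeTCValid}) network~$N$ with root~$\rho$, internal nodes~$s,w,u,v$, reticulation~$r$, leaves~$x,y,z,\ell$, and edges $(\rho,s),(s,w),(s,v),(w,u),(w,y),(u,x),(u,r),(v,r),(v,z),(r,\ell)$. Deleting~$(u,r)$ is a valid deletion (only~$u$ and~$r$ are suppressed) and yields a tree~$N'$ in which~$w$ is a pure node with $desc_{N'}(w)=\{x,y\}$, whereas $desc_N(w)=\{x,y,\ell\}$: the deleted edge \emph{does} lie on a $w$-to-leaf path and is not merely ``rerouted below~$w$.'' Consequently $BT(N')$ contains the blob node~$\{x,y\}$ even though no node of~$N$ whatsoever has descendant set~$\{x,y\}$. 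So the direction of your reduction that the contrapositive needs---``if no node of~$N$ with $desc_N(w)=A$ is promoted, then $A\notin BT(N')$''---fails: a promoted node whose $N$-descendant set \emph{strictly contains}~$A$ can witness~$A$ in~$BT(N')$. This is not a repairable detail; it is the crux of the lemma, and it is exactly why the hypothesis must quantify over \emph{every} maximum subnetwork (compare Observation~\ref{obs:DiffBlobTreeAfterDeletion} and the disjointness of the new foundation-node sets~$F'$,~$F''$ in Lemma~\ref{lem:MinNumberMStoReconstructBT}, both of which exploit the fact that promoted nodes acquire \emph{new} descendant sets). Both your ``easy case'' (deleting an edge outside~$B$) and your concluding step collapse once this reduction is gone.

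Two further problems. First, you invoke Lemma~\ref{lem:TCLeafDescSet} to confine all nodes with descendant set~$A$ to a single blob, but that lemma is stated and proved only for tree-child networks, while the present lemma assumes only that~$N$ is valid; valid networks need not be tree-child (e.g.\ the tessellating crown of Figure~\ref{fig:TesselatingCrown}), so this step is unjustified. Second, even granting everything else, you explicitly defer the remaining case (``ruling it out requires a finer analysis \dots\ if necessary, repeating the argument''), so the argument is incomplete by its own account. For contrast, the paper's proof avoids all of this: it fixes a \emph{lowest} reticulation~$r$ above some element of~$A$, looks only at the two maximum subnetworks obtained by deleting the two edges entering~$r$, shows that $desc_N(r)$ and~$A$ must be nested (otherwise one of these two blob trees already omits~$A$), and in each nested case argues that the pure node with descendant set~$A$, assumed to exist in one of these subnetworks, is already a pure node of~$N$ with descendant set~$A$ in~$N$. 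That argument needs no control over which descendant sets can newly arise, and no tree-child property.
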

\begin{proof} 
 Consider some lowest reticulation~$r$ in~$N$ such that~$r$ is the ancestor of some~$a \in A$. 
 Let~$c$ be the child of~$r$ in~$N$.
 Since~$r$ is of outdegree-1, we have~$desc_N(r) = desc_N(c)$.
 We may assume~$desc_N(r)\neq A$, as otherwise~$c$ is the root of a pendant subtree spanning~$A$ in~$N$, and consequently~$A$ is a blob node in~$BT(N)$.
 Let~$(u,r),(v,r)$ be the edges leading into~$r$.
 Let~$N', N''$ be the maximum subnetworks of~$N$ obtained by deleting~$(u,r), (v,r)$ respectively.
   Note here that every node~$x$ in~$N'$ or~$N''$ is also a node in~$N$.
 We now examine the relations between~$desc_N(r)$ and~$A$ exhaustively.
 \begin{itemize}
  \item Suppose~$desc_N(r)\not\subset A$ and~$A\not\subset desc_N(r)$.
   We show that there is no node in~$N'$ that has leaf-descendant set~$A$.
   By assumption, there exists a node~$a'\in desc_N(r)$ such that~$a'\notin A$. Then,~$a'\in desc_{N'}(c)$.
   Let~$x$ be a node in~$N'$ (which is also a node in~$N$).
   We examine the relations between~$x$ and~$c$ in~$N'$ exhaustively.
   \begin{itemize}
    \item If~$x$ is an ancestor of~$c$ in~$N'$ then~$desc_{N'}(x)\neq A$ since~$a'\in desc_{N'}(c)\subseteq desc_{N'}(x)$.
    \item If~$x$ is a descendant of~$c$ in~$N'$ then~$desc_{N'}(x)\neq A$ since~$A\not\subset desc_{N'}(c)$.
    \item If~$x$ is incomparable to~$c$ in~$N'$ then~$desc_{N'}(x)\neq A$ since~$a\notin desc_{N'}(x)$ by assumption that~$r$ was the lowest reticulation above~$a$.
   \end{itemize}
   It follows that~$A$ is not in~$BT(N')$, and this case is not possible.
   The only possibilities then, are either~$A\subsetneq desc_N(r)$ or~$desc_N(r)\subsetneq A$.
 \end{itemize}

 By assumption,~$BT(N')$ and~$BT(N'')$ both contain~$A$.
 Because of this, there are corresponding pure nodes~$x', x''$ in~$N', N''$ (also in~$N$) respectively with~$desc_{N'}(x') = desc_{N''}(x'') =A$.
 
 \begin{itemize}
  \item Suppose~$A\subsetneq desc_N(r)$.
  Then~$x'$ must be a descendant of~$c$ in~$N'$, implying that~$x'$ must be a descendant of~$r$ in~$N$. 
  We claim that~$x'$ is a pure node in~$N$ with~$desc_N(x')=A$.
  If~$x'$ is not a pure node in~$N$ then there exists a reticulation~$s\neq r$ below~$x'$ where~$s$ and~$x'$ are contained in the same blob in which~$x'$ is not the top-node, in~$N$.
  The edge deletion does not suppress or delete the node~$s$, since~$s$ is a descendant of~$r$, and any directed path from~$r$ to~$s$ is of length at least~2.
  Then,~$s$ is a reticulation that is below~$r$ such that the leaf-descendant set of~$s$ contains an element of~$A$.
  This contradicts our choice of~$r$, so~$x'$ must be a pure node in~$N$.
  Furthermore, we must have~$desc_N(x') = desc_{N'}(x') = A$ where the first equality holds as deleting a reticulation edge from above a node does not change its leaf-descendant set in the resultant subnetwork.
  Then~$x'$ must be a pure node in~$N$ with~$desc_N(x')=A$ and we are done.
  \item So we may assume~$desc_N(r)\subsetneq A$.
 We now claim that~$desc_N(v)\subseteq A$.
 Suppose not.
 Noting that~$v$ is not suppressed in~$N'$ (since~$N$ is a valid network), and since~$desc_{N'}(v) = desc_N(v)$, we split into the three possible cases for the relation between~$x'$ and~$v$ in~$N'$.
 \begin{itemize}
  \item If~$x'$ is an ancestor of~$v$ then it is also an ancestor of~$b\notin A$ in~$N'$ for some~$b\in desc_{N'}(v)$, a contradiction.
  \item If~$x'$ is incomparable to~$v$ then~$x'$ is also incomparable to~$c$ in~$N'$.
  Then, since~$a\in A$ is a leaf-descendant of~$x'$ in~$N'$, there is a reticulation~$s$ below~$r$ in~$N$ such that~$s$ is an ancestor of~$a$, which contradicts our choice of~$r$.
  \item If~$x'$ is a descendant of~$v$ then it must either be incomparable to or be a descendant of~$c$ in~$N'$.
  \begin{itemize}
   \item If~$x'$ is incomparable to~$c$ in~$N'$, then we reach a contradiction by the same argument as above.
   \item If~$x'$ is a descendant of~$c$ in~$N'$, then as~$desc_{N'}(c)\subsetneq A$ (since~$desc_N(r) = desc_{N'}(c)$) we have that~$desc_{N'}(x')\subsetneq A$, a contradiction.
  \end{itemize}
 \end{itemize}
 Thus we have that~$desc_N(v)\subseteq A$.
 By an analogous reasoning on~$x''$ in~$N''$, we have that~$desc_N(u)\subseteq A$.
 It follows that~$x'$ must be an ancestor of~$v$ in~$N'$, and so~$x'$ must be an ancestor of~$v$ in~$N$.
 It also follows that~$x'$ must be an ancestor of~$u$ in~$N$ to ensure that there is a path from~$x'$ to the leaf-descendants of~$u$ in~$N'$.
 
 We now claim that~$x'$ is also a pure node in~$N$ with leaf-descendant set~$A$.
 Indeed, adding the edge~$(u,r)$  to~$N'$ (after undoing any cleaning up) only joins descendants of~$x'$, implying~$x'$ has leaf-descendant set~$A$ in~$N$. 
 Furthermore, it cannot add any nodes that are not descended from~$x'$ to the blob containing~$x'$.
 It follows that~$x'$ remains a pure node in~$N$ with leaf-descendant set~$A$.
 \end{itemize}
\end{proof}

By combining the previous two lemmas, we see that the blob trees of valid networks are reconstructible from their maximum subnetworks.

\begin{theorem}\label{thm:BTNodeLabelPreservation}
 For a valid network~$N$, given a set~$A \subseteq X$, the blob tree~$BT(N)$ contains \review{the} blob node~$A$ if and only if~$BT(N')$ contains \review{the} blob node~$A$ for \review{every} maximum subnetwork~$N'$ of~$N$.
\end{theorem}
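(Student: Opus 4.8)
The plan is to observe that Theorem~\ref{thm:BTNodeLabelPreservation} is simply the conjunction of the two lemmas immediately preceding it, so the proof should consist of recognizing this and stitching the two directions together with no new technical content. Concretely, I would argue as follows.

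First I would set up the biconditional by fixing a valid network~$N$ and a set~$A\subseteq X$, and observing that the two implications we need are exactly Lemma~\ref{lem:BT(N)ContainsAThenBT(N')AlsoDoes} and Lemma~\ref{lem:BT(N')ContainsAThenBT(N)ContainsA}. For the forward direction, I would invoke Lemma~\ref{lem:BT(N)ContainsAThenBT(N')AlsoDoes}: if~$BT(N)$ contains the blob node~$A$, then~$BT(N')$ contains~$A$ for every maximum subnetwork~$N'$ of~$N$. For the reverse direction, I would invoke Lemma~\ref{lem:BT(N')ContainsAThenBT(N)ContainsA}: if~$BT(N')$ contains~$A$ for every maximum subnetwork~$N'$ of~$N$, then~$BT(N)$ contains~$A$. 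Since both~$N$ and all its maximum subnetworks are well-defined (deletions are valid because~$N$ is valid, so each maximum subnetwork is again a network on~$X$), the two lemmas combine to give the stated ``if and only if.''

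The one subtlety worth flagging is the quantifier structure: the statement asserts that $BT(N)$ contains $A$ precisely when \emph{every} maximum subnetwork $N'$ has $A$ in its blob tree. The forward lemma already delivers the universal quantifier (it holds for every $N'$), and the reverse lemma takes the universal hypothesis as its premise, so the quantifiers line up exactly and no re-indexing or case analysis is needed. I would also note in passing that Corollary~\ref{cor:BTUniqueLabel} guarantees blob nodes are uniquely labelled in tree-child networks, which justifies speaking of ``the'' blob node~$A$ rather than ``a'' blob node; since validity is the only hypothesis actually used by the two lemmas, I would phrase the proof in terms of valid networks and let the uniqueness of labels be an implicit justification for the phrasing.

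I do not expect any genuine obstacle here, since all the real work has been discharged in the two preceding lemmas; the main step is purely organizational, namely recognizing that the theorem is their immediate corollary. If anything, the only thing to be careful about is to state clearly which lemma supplies which direction so that the reader can see the biconditional is fully justified, and to confirm that the shared hypothesis ``$N$ is a valid network'' is the common ground on which both lemmas rest.
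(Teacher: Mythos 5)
Your proposal is correct and matches the paper's own proof exactly: the paper's proof of Theorem~\ref{thm:BTNodeLabelPreservation} is precisely ``Follows from Lemmas~\ref{lem:BT(N)ContainsAThenBT(N')AlsoDoes} and~\ref{lem:BT(N')ContainsAThenBT(N)ContainsA}.'' Your additional remarks on quantifier alignment and the shared validity hypothesis are accurate elaborations of the same organizational step, with no divergence in approach.
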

 
\begin{proof}
 Follows from Lemmas~\ref{lem:BT(N)ContainsAThenBT(N')AlsoDoes} and~\ref{lem:BT(N')ContainsAThenBT(N)ContainsA}.
\end{proof}

We can prove a similar result for MLLSs.

\begin{theorem}\label{thm:BTNodeLabelPreservationMLLS}
 Let~$N$ be a level-$k$ valid network, with~$k\geq 1$.
 Given a set~$A \subseteq X$, the blob tree~$BT(N)$ contains the blob node~$A$ if and only if~$BT(N^{mlls})$ contains~$A$ for \review{every} MLLS~$N^{mlls}$ of~$N$.
\end{theorem}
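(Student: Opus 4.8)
The plan is to prove the two implications separately, in both cases bootstrapping from the maximum-subnetwork version, Theorem~\ref{thm:BTNodeLabelPreservation}. The engine is a simple dichotomy for a single reticulation edge $f$ of $N$: if $f$ lies in a level-$k$ blob, then deleting one further valid edge from each of the remaining level-$k$ blobs turns $N\setminus f$ into an MLLS of $N$, and in fact every MLLS of $N\setminus f$ arises in this way and is itself an MLLS of $N$; whereas if $f$ lies in a blob of level below $k$, then $f$ survives in every MLLS of $N$, and the MLLSs of $N\setminus f$ are precisely the networks $N^{mlls}\setminus f$ with $N^{mlls}\in\cN^{mlls}(N)$.

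For the forward implication, suppose $BT(N)$ contains the blob node $A$ and let $N^{mlls}=N\setminus\{e_1,\dots,e_m\}$ be any MLLS, where $e_i$ is a valid edge of the $i$-th level-$k$ blob. Deleting the edges $e_1,\dots,e_m$ one at a time and applying Lemma~\ref{lem:BT(N)ContainsAThenBT(N')AlsoDoes} after each deletion shows that $A$ is retained at every step, so $BT(N^{mlls})$ contains $A$. Each $e_i$ is still valid at the moment it is deleted, since its blob is left intact by the earlier deletions, and the proof of Lemma~\ref{lem:BT(N)ContainsAThenBT(N')AlsoDoes} only invokes the validity of the single edge being removed.

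For the (harder) converse I would induct on the total number $R$ of reticulations of $N$. Assume $BT(N^{mlls})$ contains $A$ for every MLLS $N^{mlls}$. By Theorem~\ref{thm:BTNodeLabelPreservation} it suffices to show that $BT(N\setminus f)$ contains $A$ for every reticulation edge $f$ of $N$. If $N\setminus f$ is itself an MLLS of $N$ (which happens exactly when $f$ lies in a level-$k$ blob and that is the only one, so in particular whenever $R=1$), this holds by assumption. Otherwise $N\setminus f$ has $R-1$ reticulations and still has level at least $1$, and by the dichotomy above every MLLS of $N\setminus f$ either is an MLLS of $N$, or has the form $N^{mlls}\setminus f$ for some $N^{mlls}\in\cN^{mlls}(N)$; in the first case it contains $A$ by assumption, and in the second it contains $A$ by the forward direction of Theorem~\ref{thm:BTNodeLabelPreservation} applied to $N^{mlls}$. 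Hence every MLLS of $N\setminus f$ contains $A$, and the induction hypothesis gives $BT(N\setminus f)\ni A$, as required.

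I expect the main obstacle to be precisely the reticulations sitting in blobs of level below $k$: these are never removed in passing to an MLLS, so the naive reduction ``peel off one level-$k$ blob'' does not on its own produce the maximum-subnetwork hypothesis needed by Lemma~\ref{lem:BT(N')ContainsAThenBT(N)ContainsA}. Inducting on the total number of reticulations, rather than on the number of level-$k$ blobs, is exactly what allows these low-level edges to be stripped one at a time. The remaining technical point is that the intermediate networks must be valid for the quoted results to apply; this is immediate for tree-child networks, the class of interest, by Lemmas~\ref{lem:RetEdgeDelNetTC} and~\ref{lem:DeletingRetEdgeTCValid}, and in general only the validity of each individually deleted edge is used, which holds because every edge we delete lies in a blob left untouched by the previous deletions.
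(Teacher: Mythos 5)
Your forward direction is essentially the paper's own argument (peel off $e_1,\dots,e_m$ one at a time, invoking the single-edge result at each step), and your per-edge validity remark there is sound. The genuine problem is in your converse. Your induction applies the induction hypothesis --- the theorem itself, whose hypothesis is that the network is \emph{valid} --- to $N\setminus f$, and validity is \emph{not} closed under deletion of a valid reticulation edge. The paper proves such closure only for tree-child networks (Lemma~\ref{lem:RetEdgeDelNetTC} combined with Lemma~\ref{lem:DeletingRetEdgeTCValid}); for general valid networks it fails. Concretely, take a blob with top node $t$, edges $(t,p),(t,d),(p,w),(p,c),(w,u),(w,v),(u,c),(u,v'),(d,v),(d,v')$, where $c,v,v'$ are reticulations whose children are leaves: all six reticulation edges are valid, but after deleting $f=(w,v)$ the suppression of $w$ creates the edge $(p,u)$, and then deleting $(u,v')$ suppresses $u$ onto the pre-existing edge $(p,c)$, producing parallel edges --- so $(u,v')$ is invalid in $N\setminus f$. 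Hang a second level-$k$ blob elsewhere in $N$ and this is exactly your ``case 2'': $N\setminus f$ is not an MLLS of $N$, it is not valid, and the induction hypothesis simply does not apply to it. Your closing defense that ``only the validity of each individually deleted edge is used'' does not repair this, because the hypothesis for $N\setminus f$ quantifies (through Theorem~\ref{thm:BTNodeLabelPreservation}, i.e., Lemma~\ref{lem:BT(N')ContainsAThenBT(N)ContainsA}) over deletions of \emph{all} reticulation edges of $N\setminus f$, including edges in the very blob your deletion of $f$ just modified --- precisely the edges that can have become invalid, as in the example above.

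The paper's converse avoids this entirely by never applying Theorem~\ref{thm:BTNodeLabelPreservation} to a derived network: it argues in contrapositive form. If $A\notin BT(N)$, it takes the blob $B$ whose leaf-descendant set is the smallest one containing $A$, observes that a pure node with leaf-descendant set $A$ in an MLLS could only arise from nodes of $B$ and that deletions in other blobs cannot create one, reduces without loss of generality to the case that $N$ is a single-blob network (where MLLSs coincide with maximum subnetworks, since $N$ itself is valid), and only then invokes Theorem~\ref{thm:BTNodeLabelPreservation}, applied to the valid network $N$ alone. Your induction on the number of reticulations is a correct and genuinely different argument \emph{if the class is restricted to tree-child networks}, but as a proof of the theorem as stated --- for valid networks --- it has a real gap: it rests on a closure property that is false in general, and patching it would require reproving the cited lemmas under weaker, per-edge validity hypotheses.
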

\begin{proof}
 Suppose first that the blob tree~$BT(N)$ contains the node~$A$, and let~$N^{mlls}$ be an MLLS of~$N$ obtained by deleting the edges in the set~$E = \{e_1,\dots,e_m\}$.
 Consider the maximum subnetwork~$N'$ of~$N$ obtained by deleting the reticulation edge~$e_1$.
 By Theorem~\ref{thm:BTNodeLabelPreservation},~$BT(N')$ contains the blob node~$A$.
 Now consider the maximum subnetwork~$N''$ of~$N'$ obtained by deleting the reticulation edge~$e_2$.
 Then~$BT(N'')$ contains the blob node~$A$ by Theorem~\ref{thm:BTNodeLabelPreservation}.
 Continuing in this fashion for all edges in~$E$ shows that~$BT(N^{mlls})$ contains the blob node~$A$.\\
 
 Now suppose that~$A$ is not a blob node of~$BT(N)$.
    We prove that then there exists an MLLS~$N^{mlls}$ of~$N$ such that~$BT(N^{mlls})$ does not contain the blob node~$A$.
    Let~$B$ denote the blob in~$N$ with leaf-descendant set~$D$, such that~$D$ is the smallest set that contains~$A$.
    Consequently, if there exists a pure node in an MLLS~$N^{mlls}$ of~$N$ with leaf-descendant set~$A$, then it must be a node that was originally in the blob~$B$.
    Now observe that deleting reticulation edges from blobs that are not~$B$ do not affect the leaf-descendant set of nodes in~$B$.
    Then we may assume, without loss of generality, that~$N$ is a single blob network.
    But then by Theorem~\ref{thm:BTNodeLabelPreservation}, $A$ is not a blob node in~$BT(N^{mlls})$, for some MLLS~$N^{mlls}$ of~$N$.
\end{proof}

We call a set~$A\subseteq X$ a \emph{foundation node} of~$N$ if~$BT(N)$ contains the node~$A$.
Let~$\cF(N)$ be the set of all foundation nodes of~$N$.

\begin{theorem}\label{thm:BloBTreeReconstruction}
 For a level-$k$ valid network~$N$, with~$k\geq 1$, its blob tree~$BT(N)$ is reconstructible from its MLLSs.
\end{theorem}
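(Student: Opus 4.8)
The plan is to split the reconstruction of~$BT(N)$ into two steps: first recover the set~$\cF(N)$ of foundation nodes from the MLLSs, and then rebuild the tree structure of~$BT(N)$ from~$\cF(N)$ alone.

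For the first step, Theorem~\ref{thm:BTNodeLabelPreservationMLLS} does the essential work. Each MLLS~$N^{mlls}\in\cN^{mlls}(N)$ is given explicitly, so I can compute each blob tree~$BT(N^{mlls})$ and read off its set of blob-node labels. By Theorem~\ref{thm:BTNodeLabelPreservationMLLS}, a set~$A\subseteq X$ belongs to~$\cF(N)$ if and only if~$A$ is a blob node of~$BT(N^{mlls})$ for every MLLS~$N^{mlls}$ of~$N$. Hence~$\cF(N)$ is precisely the intersection, over all~$N^{mlls}\in\cN^{mlls}(N)$, of the blob-node label sets of~$BT(N^{mlls})$, and so~$\cF(N)$ is recoverable from~$\cN^{mlls}(N)$.

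For the second step, I would show that the containment order on~$\cF(N)$ determines~$BT(N)$. Each foundation node~$A$ is the leaf-descendant set of the top node of a blob of~$N$, and since~$N$ is acyclic a leaf~$\ell$ lies below a blob~$B$ only if~$B$ is an ancestor in~$BT(N)$ of the unique blob directly above~$\ell$; hence two blobs that are incomparable in~$BT(N)$ have disjoint leaf-descendant sets, while a blob below another has leaf-descendant set contained in that of the other. Thus~$\cF(N)$ is a laminar family, and for foundation nodes~$A,A'$ the blob node~$A$ is a strict ancestor of~$A'$ in~$BT(N)$ if and only if~$A'\subsetneq A$. Therefore~$BT(N)$ is recovered up to isomorphism by taking node set~$\{\rho\}\cup\cF(N)$, where~$\rho$ is a new unlabelled root, and declaring the parent of each~$A'\in\cF(N)$ to be the unique minimal element of~$\cF(N)$ strictly containing~$A'$, or~$\rho$ if no such element exists; uniqueness of this minimal element follows from laminarity.

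The only genuine obstacle is ensuring that passing to the set~$\cF(N)$ loses no information, i.e.\ that distinct blob nodes of~$BT(N)$ carry distinct labels, so that foundation nodes are in bijection with the non-root nodes of~$BT(N)$. For the tree-child networks that are our target this is exactly Corollary~\ref{cor:BTUniqueLabel}, after which the remaining argument is the routine reconstruction of a rooted tree from a laminar family of subsets.
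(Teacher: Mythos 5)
Your proposal is correct and takes essentially the same route as the paper: the paper's proof likewise uses Theorem~\ref{thm:BTNodeLabelPreservationMLLS} to recover~$\cF(N)$ as the labels common to the blob trees of all MLLSs, and then declares~$BT(N)$ to be the tree on vertex set~$\cF(N)$ with an edge~$(A,B)$ precisely when~$B\subsetneq A$ and no~$C\in\cF(N)$ satisfies~$B\subsetneq C\subsetneq A$. Your extra observations (laminarity of~$\cF(N)$, the unlabelled root, and uniqueness of labels via Corollary~\ref{cor:BTUniqueLabel}) simply make explicit what the paper's two-sentence proof leaves implicit.
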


\begin{proof}
 By Theorem~\ref{thm:BTNodeLabelPreservationMLLS}, the set of all foundation nodes~$\cF(N)$ consists of the blob nodes that appear in \review{$BT(N^{mlls})$ for every MLLS~$N^{mlls}$ of~$N$.} 
 Then, the blob tree~$BT(N)$ is the tree with vertex set~$\cF(N)$ and an edge~$(A,B)$ precisely if~$B\subsetneq A$ and there is no~$C\in\cF(N)$ with~$B\subsetneq C\subsetneq A$.
\end{proof}

\subsection{Minimum number of MLLSs to reconstruct the Blob Tree of a Tree-child network}\label{subsec:MinNumMLLSBT}

 \setcounter{claimcounter}{0}
 \setcounter{claimproofcounter}{0}

We consider the minimum number of MLLSs required to reconstruct the blob tree of a tree-child network.
Let~$r$ be some reticulation in a blob~$B$.
We call a node~$s$ a \emph{pseudo pure node of~$r$} if it is \review{a} lowest node in~$B$ such that there are two edge disjoint directed paths from~$s$ to~$r$.

\begin{lemma}\label{lem:MinNumberMStoReconstructBT}
 Let~$N$ be a level-$k$ tree-child network where~$k\geq1$.
 Two maximum subnetworks~$N'$ and~$N''$ of~$N$ suffice to reconstruct~$BT(N)$.
\end{lemma}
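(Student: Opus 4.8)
The plan is to show that a single well-chosen pair of maximum subnetworks already recovers the foundation node set $\cF(N)$, which by Theorem~\ref{thm:BloBTreeReconstruction} determines $BT(N)$. By Lemma~\ref{lem:BT(N)ContainsAThenBT(N')AlsoDoes} we have $\cF(N)\subseteq \cF(N')$ for every maximum subnetwork $N'$, so for any two maximum subnetworks $\cF(N)\subseteq \cF(N')\cap \cF(N'')$; it therefore suffices to choose $N',N''$ for which the reverse inclusion $\cF(N')\cap \cF(N'')\subseteq \cF(N)$ holds, i.e.\ for which no \emph{spurious} blob node (an element of $\cF(N')\setminus\cF(N)$) survives into both. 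The first thing I would record is that deleting a single reticulation edge only disturbs the blob containing that edge: an unmodified blob keeps its pure node (hence its genuine foundation node) and gains no new pure nodes, since the deletion removes no leaves and does not change the leaf-descendant set of any top node outside the affected blob. Consequently every spurious node of $N'$ has leaf-descendant set contained in $desc_N(B)$, where $B$ is the unique blob from which the edge was removed.

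Given this, I would choose $N'$ and $N''$ to be the two maximum subnetworks obtained by deleting the two incoming edges of a single reticulation $r$, taking $r$ to be a lowest reticulation of some blob $B$ of level at least $1$ (equivalently, isolating and cutting a lowest reticulated cherry shape of $B$, which exists by Lemma~\ref{lem:blobseesretcherryshape}). This is exactly the pair of subnetworks appearing inside the proof of Lemma~\ref{lem:BT(N')ContainsAThenBT(N)ContainsA}, which shows that if a set $A$ lies in the blob trees of both subnetworks obtained by deleting the two edges of the lowest reticulation above some $a\in A$, then $A\in\cF(N)$. With our fixed choice all spurious nodes are confined to $B$, so the whole argument reduces to certifying that every spurious node of $N'$ (or of $N''$) really does have $r$ as the lowest reticulation above one of its leaves, which then lets Lemma~\ref{lem:BT(N')ContainsAThenBT(N)ContainsA} be invoked verbatim.

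To establish that certification I would argue as follows. A spurious node $A=desc_{N'}(w)$ arises from a node $w$ that is pure in $N'$ but not in $N$; since deleting a reticulation edge of a tree-child network is valid and hence self-contained (Lemma~\ref{lem:DeletingRetEdgeTCValid} and the remark following it), the only structural change is the suppression of $r$ and of the tail of the deleted edge, so $w$ can only have become pure because $r$ was removed from its blob. Using the child $c$ of $r$ and the tree path from $c$ to a leaf $a$ supplied by the tree-child property, I would show that $c$ lies at or below $w$, whence $a\in A$ and $r$ is the lowest reticulation above $a$ in $N$. Here the pseudo pure node of $r$ is the right bookkeeping device: it pins down exactly which node is exposed as a new pure node by the deletion, and hence what the spurious leaf-descendant set can be. Once the certification holds, Lemma~\ref{lem:BT(N')ContainsAThenBT(N)ContainsA} forces every member of $\cF(N')\cap\cF(N'')\setminus\cF(N)$ back into $\cF(N)$, a contradiction; thus $\cF(N')\cap\cF(N'')=\cF(N)$ and $BT(N)$ is reconstructed.

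The step I expect to be the main obstacle is precisely this certification: ruling out the possibility that the deletion exposes a new pure node $w$ that is \emph{incomparable} to the child $c$ of $r$, and whose leaf-descendant set therefore need not ``see'' $r$. Handling this requires a careful local analysis of how a blob fragments when one edge is removed, using that a blob is $2$-edge-connected (so it stays connected and its resulting pieces sit in a nested block structure rather than splitting off incomparably), that tree-childness creates no parallel edges and guarantees a tree path to a leaf below every node, and that the pseudo pure node of $r$ marks the unique place where the blob can split. I would isolate this as a separate claim and prove it by the same exhaustive comparison of the position of $w$ relative to $c$ (ancestor, descendant, or incomparable) that already drives the proof of Lemma~\ref{lem:BT(N')ContainsAThenBT(N)ContainsA}.
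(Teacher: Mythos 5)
Your skeleton matches the paper's proof exactly: you pick the same two subnetworks (cutting and isolating a lowest reticulated cherry shape, i.e.\ deleting the two edges entering a lowest reticulation~$r$), and you aim for the same target, namely that the intersection of the blob-tree node sets of~$N'$ and~$N''$ is precisely~$\cF(N)$. The gap is your certification step, and it is not just unproven but false as stated. Concretely, let~$N$ have root edge into~$s$ and edges $s\to A_1,A_2$; $A_1\to r_2,P$; $A_2\to r_2,\ell_1$; $r_2\to m$; $m\to m_1,\ell_2$; $m_1\to \ell_3,r_1$; $P\to \ell_4,r_1$; $r_1\to \ell_5$. This is a binary tree-child network whose unique nontrivial blob is a level-$2$ blob containing~$r_1,r_2$; the lowest reticulation is~$r_1$, with reticulated cherry shape~$\langle \ell_3,\ell_5\rangle$, $p_x=m_1$, $g_y=P$. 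Cutting (deleting~$(m_1,r_1)$) gives~$N'$ in which~$m$ becomes a new pure node with $desc_{N'}(m)=\{\ell_2,\ell_3\}$: a spurious foundation node containing \emph{no} leaf below~$r_1$ (the only such leaf is~$\ell_5$), whose pure node~$m$ is incomparable to the child $c=\ell_5$ of~$r_1$ in~$N'$. So the situation you flagged as ``the main obstacle''---a newly exposed pure node incomparable to~$c$---cannot be ruled out; it genuinely occurs, your intermediate claim that $c$ lies at or below~$w$ fails for it, and Lemma~\ref{lem:BT(N')ContainsAThenBT(N)ContainsA} cannot be invoked for such nodes.

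The paper resolves this without ever showing that spurious nodes ``see''~$r$. Its Claims~1 and~2 prove that every spurious pure node of~$N'$ or~$N''$ is an ancestor of~$r$ in~$N$ and lies strictly below the pseudo pure node of~$r$, hence is an ancestor of exactly one of~$p_x$,~$g_y$; then, fixing leaves $u\in desc_N(x)$ and $v\in desc_N(y)$, every spurious set of~$N'$ contains exactly one of~$u,v$, whereas every spurious set of~$N''$ contains both or neither, so $F'\cap F''=\emptyset$ outright. (In the example above, $\{\ell_2,\ell_3\}$ contains $u=\ell_3$ but not $v=\ell_5$, while the spurious sets of~$N''$, namely $\{\ell_2,\ell_3,\ell_5\}$ and $\{\ell_3,\ell_5\}$, contain both.) You could try to salvage your reduction by certifying only sets lying in \emph{both} $\cF(N')$ and $\cF(N'')$, but proving that restricted statement already requires exactly this ancestor-dichotomy plus $u,v$-containment analysis, at which point disjointness of~$F'$ and~$F''$ is established directly and the appeal to Lemma~\ref{lem:BT(N')ContainsAThenBT(N)ContainsA} becomes superfluous. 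So the proposed reduction does not avoid the hard step, and its key claim, as written, is refuted by the example.
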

\begin{proof}
 Let~$r$ be a lowest reticulation in some blob~$B$.
 Let~\review{$\langle x,y\rangle$} denote the reticulated cherry shape \review{that contains}~$r$. Let~$p_x$ and~$p_y=r$ be the parents of~$x$ and~$y$, respectively, and let~$g_y$ be the parent of~$p_y$ that is not~$p_x$.
 Let~$N'$ and~$N''$ be the maximum subnetworks of~$N$ derived by cutting and isolating~$\langle x,y\rangle$ respectively.
 \review{Let~$F'$ and~$F''$ denote the set of foundation nodes of~$N'$ and~$N''$, respectively, that are not foundation nodes of~$N$.}
 We claim that the intersection of \review{~$F'$ and~$F''$} is empty, from which it follows that the intersection of the node sets of~$BT(N')$ and~$BT(N'')$ contains the foundation nodes of~$N$. Since by Lemma~\ref{lem:BT(N)ContainsAThenBT(N')AlsoDoes} each foundation node of~$N$ is a foundation node of each maximum subnetwork, it follows that the intersection of the node sets of~$BT(N')$ and~$BT(N'')$ is precisely the set of \review{all} foundation nodes of~$N$. 
 
 \review{Let~$P'$ denote the set of all pure nodes in~$N'$ that have leaf-descendant sets in~$F'$.
 Similarly let~$P''$ denote the set of all pure nodes in~$N''$ that have leaf-descendant sets in~$F''$.
 We prove the following claims regarding the pure nodes of~$P'$ and~$P''$.}\\
 
 \begin{claime}
  Let~$p\in P'$ ($p\in P''$). 
  Then~$p$ is an ancestor of~$r$ in~$N$.
 \end{claime}
 
 \begin{claimproof}
  Suppose not. 
  First suppose that~$p$ is a descendant of~$r$ in~$N$.
  As~$r$ is a lowest reticulation in~$N$,~$p$ is a tree node or a leaf in~$N'$ ($N''$).
  If~$p$ is a tree node then~$p$ must have been a pure node in~$N$ to begin with: the pendant subnetwork rooted at the child of~$r$ is an invariant upon obtaining maximum subnetworks of~$N$, since~$r$ is a lowest reticulation.
  This contradicts the fact that~$p$ is an element of~$P'$ ($P''$).
  If~$p$ is a leaf then~$p$ cannot be a pure node in~$N'$ ($N''$), a contradiction. 

  Now suppose that~$p$ is incomparable to~$r$ in~$N$.
  Let~$p$ be the pure node of a blob~$B'$ in~$N'$ ($B''$ in~$N''$).
  As~$p$ is not an ancestor of~$r$ in~$N$,~$p$ must also not be an ancestor of~$p_x$ nor~$g_y$ in~$N$.
  We see that~$B'$ ($B''$) remains a blob after adding the edge~$(p_x,r)$ to~$N'$ ($(g_y,r)$ to~$N''$), and so~$p$ remains a pure node in~$N$, a contradiction.
  
 \end{claimproof}
 
 
 \begin{claime}
  Let~$p\in P'$ ($p \in P''$), \review{and let~$s$ be a pseudo pure node of~$r$ in~$N$.}
  Then~$p$ is a descendant of~$s$ in~$N$ and~$p\neq s$.
 \end{claime}
 
 \begin{claimproof}
  Suppose not.

  If~$p$ is equal to or strictly above~$s$ then~$p$ is an ancestor of both~$p_x$ and~$g_y$.
  Adding the edge~$(p_x,r)$ to~$N'$ ($(g_y,r)$ to~$N''$) only joins descendants of~$p$. 
  Furthermore, it cannot add any nodes that are not descended from~$p$ to the blob in~$N'$ ($N''$) containing~$p$.
  It follows that~$p$ remains a pure node in~$N$, a contradiction.

  Now suppose that~$p$ is incomparable to~$s$.
  If~$p$ is not in the blob~$B$, then as reticulation edge deletions do not affect other blobs, we have that~$p$ must have been a pure node in~$N$.
  This contradicts our assumption on~$p$.
  So~$p$ must be in the blob~$B$.
  Since~$p$ is incomparable to~$s$, but~$p$ must still be an ancestor of~$r$ by Claim~1,~$p$ must be an ancestor of a reticulation~$r'$ such that~$s$ is an ancestor of~$r'$ and~$r'$ is an ancestor of~$r$.
  The parent of~$s$, denoted~$p_s$, is not suppressed in both~$N'$ and~$N''$. 
  Now~$p_s$ is either above or incomparable to~$p$, and the two nodes belong to the blob which contains~$r'$ in~$N'$ ($N''$) \review{(see Figure~\ref{fig:PureNodeDescendantOfPseudo} (a))}.
  It follows that~$p$ cannot be a pure node in~$N'$ ($N''$), which contradicts our assumption.
  
 \end{claimproof}
 
 It remains to show that \review{~$F' \cap F'' = \emptyset$}. 
 Let~$P'_x = \{p \in P' : p \text{ is an ancestor of } p_x 
 \text{ in } N\}$ and let~$P'_y = \{p \in P' : p \text{ is an ancestor of } g_y 
 \text{ in } N\}$.
 By Claim~1, we have~$P' = P'_x \cup P'_y$.
 By Claim~2, we have~$P'_x \cap P'_y = \emptyset$.
 \review{Let~$P''_x = \{p \in P'' : p \text{ is an ancestor of } p_x 
 \text{ in } N\}$ and let~$P''_y = \{p \in P'' : p \text{ is an ancestor of } g_y 
 \text{ in } N\}$.
 Similarly we have~$P'' = P''_x \cup P''_y$ and~$P''_x \cap P''_y = \emptyset$.}
 Let~$a'\in P'_x, a''\in P''_x$, and~$b''\in P''_y$.
 Let~$u\in desc_N(x)$ and let~$v\in desc_N(y)$.
 Clearly,~$u\in desc_{N'}(a')$ and~$u,v\in desc_{N''}(a'')$.
 By Claim~2,~$v\notin desc_{N'}(a')$ and~$u,v\notin desc_{N''}(b'')$ \review{(see Figure~\ref{fig:PureNodeDescendantOfPseudo} (b))}. 
 \review{This implies that for some~$A\in F'$ such that~$u\in A$, we have~$A\notin F''$.
 An analogous argument shows that for some~$B\in F'$ such that~$v\in B$, we have~$B\notin F''$.
 Because of the way in which we defined the network~$N'$, all foundation nodes in~$F'$ must contain the element~$u$ or~$v$, but not both.
 Thus the above two cases cover all foundation nodes in~$F'$; therefore,~$F'$ and~$F''$ are disjoint.}
\end{proof}

\begin{figure}[h]
 \centering
 \includegraphics[scale = 0.6]{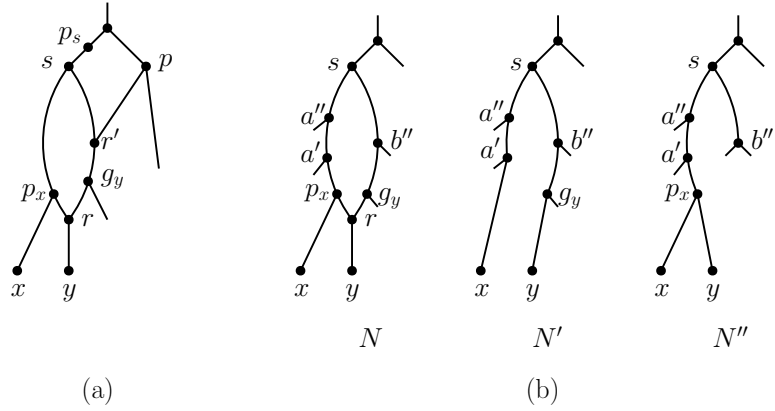}
 \caption{Visual aid for Lemma~\ref{lem:MinNumberMStoReconstructBT} proof.
 (a) Proof of Claim~2. 
 Cutting or isolating~\review{$\langle x,y\rangle$} results in subnetworks where~$p_s$ and~$p$ lie in the blob containing~$r'$.
 (b) Proof of the paragraph after Claim~2, which shows that~$P'\cap P'' = \emptyset$.
 }
 \label{fig:PureNodeDescendantOfPseudo}
\end{figure}

Let~$N$ be a network and let $N_A$ be a pendant subnetwork of~$N$ rooted by a node with leaf-descendant set~$A$.
\emph{Collapsing}~$N_A$ from~$N$ means that we replace~$N_A$ by a leaf~$A$. 
Let~$N\setminus N_A$ denote the network obtained by collapsing~$N_A$ from~$N$.

\begin{lemma}\label{lem:BTSetminusPendantSubtree}
 Let~$N$ be a tree-child network, and let~$N_A$ denote a pendant subnetwork of~$N$ rooted at a node with leaf-descendant set~$A$.
 Then $BT(N\setminus N_A)$ is obtained from $BT(N)$ by deleting the pendant subtree rooted at~$A$.
\end{lemma}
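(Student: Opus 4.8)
The plan is to prove the statement by tracking how the collapse operation acts on the blob decomposition of $N$: I will show that it leaves every blob lying outside $N_A$ intact while erasing exactly the blobs inside $N_A$, and that this is precisely the effect of removing the subtree rooted at the blob node $A$ from $BT(N)$. We may assume $|A|\ge 2$, since otherwise $N_A$ is a single leaf and the collapse is a mere relabelling that leaves $BT(N)$ unchanged. First I would fix the local structure at the cut. Let $(x,y)$ be the cut-edge whose deletion yields $N_A$, so that $y$ roots $N_A$ and $desc_N(y)=A$. Since a cut-edge lies in no biconnected component, $y$ has indegree $1$, and as $|A|\ge 2$ it is not a leaf; hence $y$ is the top (pure) node of the blob $B_y$ containing it. As $y$ lies strictly below the root of $N$, the blob $B_y$ is not the root blob, so $A$ is a genuine non-root blob node of $BT(N)$, i.e. $A\in\cF(N)$ (its uniqueness as a label being guaranteed by Corollary~\ref{cor:BTUniqueLabel}). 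Because $(x,y)$ is a bridge and $N$ is acyclic, $N_A$ consists exactly of $y$ together with all its descendants, and no edge other than $(x,y)$ crosses the cut; in particular every reticulation of $N_A$ has both parents in $N_A$.

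Next I would establish that every blob of $N$ lies wholly on one side of the cut. Since a biconnected component contains no bridge, each blob is contained either in $N_A$---these are $B_y$ and the blobs whose top node is a proper descendant of $y$---or in the connected component of $x$. Collapsing $N_A$ to a single leaf $A$ replaces the child $y$ of $x$ by the leaf $A$ and changes nothing on the $x$-side; in particular the biconnected component of $x$ is unaffected and no new biconnected component is created, because a lone leaf belongs to none. Hence the blobs of $N\setminus N_A$ are exactly the $x$-side blobs of $N$, carrying the same incident cut-edges, and this yields a bijection between the blob nodes of $BT(N\setminus N_A)$ and the $x$-side blob nodes of $BT(N)$ that preserves ancestry. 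By construction of the blob tree, the descendants of the node $A$ in $BT(N)$ are precisely the blobs whose top node descends from $y$, i.e. the blobs strictly inside $N_A$; together with $A$ itself (the node $B_y$) these form exactly the subtree rooted at $A$. Thus deleting that subtree from $BT(N)$ leaves precisely the $x$-side blob nodes, so the two trees agree as unlabelled trees.

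Finally I would reconcile the labels, under the natural identification of the collapsed leaf-set $A$ with the single new leaf $A$. For an $x$-side blob with top node $t$ incomparable to $y$, the label $desc_N(t)$ is disjoint from $A$ and is unchanged in $N\setminus N_A$; for a blob lying above $B_y$, the label $desc_N(t)$ properly contains $A$ and becomes $(desc_N(t)\setminus A)\cup\{A\}$ in $N\setminus N_A$, which is exactly the image of $desc_N(t)$ under this identification. Under it the two labelled trees coincide, which is the assertion. I expect the main obstacle to be the second step: arguing rigorously from the bridge/pendant-subnetwork property that the blob decomposition splits cleanly across $(x,y)$, so that collapsing neither merges two $x$-side blobs nor breaks one apart. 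Once this clean split is secured, the matching of tree shapes and of labels is routine bookkeeping.
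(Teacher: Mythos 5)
Your proof is correct and follows essentially the same route as the paper's: both rest on the correspondence between the pendant subnetwork~$N_A$ and the pendant subtree of~$BT(N)$ rooted at~$A$, with Corollary~\ref{cor:BTUniqueLabel} guaranteeing that the blob node~$A$ is well-defined. The paper's own proof simply asserts this correspondence in two sentences, whereas you verify it structurally (blobs split cleanly across the cut-edge~$(x,y)$, the $x$-side blobs and their ancestry are preserved by the collapse, and labels agree under the natural identification of the set~$A$ with the new leaf~$A$), so your write-up is a more rigorous elaboration of the same argument rather than a different one.
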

\begin{proof}
 By definition, there exists a blob node~$A$ in~$BT(N)$.
 Note that pendant subnetworks of~$N$ uniquely correspond to a pendant subtree of~$BT(N)$, by definition of blob trees and also because node labels of blob trees are unique for tree-child networks (Corollary~\ref{cor:BTUniqueLabel}).
 Then the pendant subtree of~$BT(N)$ rooted at~$A$ is uniquely defined by~$N_A$ and vice versa: this implies the lemma.
\end{proof}

\begin{lemma}\label{lem:MinNumberMLLStoReconstructBT}
 Let~$N$ be a level-$k$ tree-child network with~$k\geq 2$.
 Two MLLSs~$N^{mlls}_1$ and~$N^{mlls}_2$ of~$N$ suffice to reconstruct~$BT(N)$.
 In particular,~$N^{mlls}_1$ is the MLLS obtained by cutting a lowest reticulated cherry shape in every level-$k$ blob, and~$N^{mlls}_2$ is the MLLS obtained by isolating these reticulated cherry shapes.
\end{lemma}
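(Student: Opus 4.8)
The plan is to recover $BT(N)$ by showing that its foundation nodes are exactly the blob nodes common to $BT(N^{mlls}_1)$ and $BT(N^{mlls}_2)$, and then to rebuild the tree from $\cF(N)$ using the construction in the proof of Theorem~\ref{thm:BloBTreeReconstruction}. Concretely, I would prove that a set $A\subseteq X$ lies in $\cF(N)$ if and only if $A$ is a blob node of both $BT(N^{mlls}_1)$ and $BT(N^{mlls}_2)$. The easy inclusion is immediate: if $A\in\cF(N)$, then by the forward direction of Theorem~\ref{thm:BTNodeLabelPreservationMLLS} the node $A$ appears in $BT(N^{mlls})$ for \emph{every} MLLS of $N$, in particular in both $BT(N^{mlls}_1)$ and $BT(N^{mlls}_2)$; hence $\cF(N)$ is contained in the set of common blob nodes.

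For the reverse inclusion I would argue by contradiction: suppose $A$ is a blob node of both $BT(N^{mlls}_1)$ and $BT(N^{mlls}_2)$ but $A\notin\cF(N)$. Let $B$ be the smallest blob of $N$ with $A\subseteq desc_N(B)$; this blob is unique since the sets $desc_N(\cdot)$ form a laminar family, and since $A\notin\cF(N)$ we have $A\subsetneq desc_N(B)$. The key localisation step reuses the reasoning from the proof of Theorem~\ref{thm:BTNodeLabelPreservationMLLS}: any pure node of an MLLS whose leaf-descendant set equals $A$ must be a node that lay in $B$ in $N$, and deleting reticulation edges from blobs other than $B$ does not alter the leaf-descendant set of any node of $B$. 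Therefore whether $A$ arises as a (new) foundation node of $N^{mlls}_1$, respectively of $N^{mlls}_2$, is determined solely by the single edge deleted \emph{inside} $B$, independently of the deletions carried out in the other level-$k$ blobs.

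Now I would split into two cases according to the level of $B$. If $B$ is not a level-$k$ blob, then neither MLLS deletes an edge from $B$, so $B$ is unchanged; consequently no node of $B$ acquires leaf-descendant set $A\subsetneq desc_N(B)$, and $A$ cannot be a new foundation node of either MLLS, a contradiction. Hence $B$ is a level-$k$ blob, and by construction $N^{mlls}_1$ cuts while $N^{mlls}_2$ isolates the \emph{same} lowest reticulated cherry shape $\langle x,y\rangle$ of $B$. Here I invoke the local disjointness established in the proof of Lemma~\ref{lem:MinNumberMStoReconstructBT}: the new foundation nodes produced by cutting $\langle x,y\rangle$ contain a leaf of $desc_N(x)$ but no leaf of $desc_N(y)$, whereas those produced by isolating $\langle x,y\rangle$ contain either all or none of $desc_N(x)\cup desc_N(y)$, so the two families are disjoint. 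By the localisation of the previous paragraph this disjointness transfers verbatim to $N^{mlls}_1$ and $N^{mlls}_2$, so $A$ cannot be a new foundation node of both MLLSs at once — the desired contradiction.

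Combining the two inclusions yields $\cF(N)=\{A : A \text{ is a blob node of both } BT(N^{mlls}_1)\text{ and }BT(N^{mlls}_2)\}$, which is computable from the two MLLSs, and $BT(N)$ is then reconstructed exactly as in Theorem~\ref{thm:BloBTreeReconstruction}. I expect the main obstacle to be the localisation step: one must rigorously justify that the appearance of the spurious node $A$ depends only on the lone edge deleted inside the smallest blob $B$ containing $A$, so that the per-blob disjointness of Lemma~\ref{lem:MinNumberMStoReconstructBT} applies even though $N^{mlls}_1$ and $N^{mlls}_2$ differ from $N$ (and from one another) in every level-$k$ blob simultaneously. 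This relies on combining the invariance of leaf-descendant sets of $B$ under deletions elsewhere with the characterisation (from Theorem~\ref{thm:BTNodeLabelPreservationMLLS}) of which nodes can realise the label $A$, and it is precisely where the hypothesis $k\geq 2$ is used, since it guarantees that each level-$k$ blob contains a lowest reticulated cherry shape available to be cut and isolated.
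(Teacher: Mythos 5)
Your proof is correct, but it takes a genuinely different route from the paper's. The paper argues by induction on the number~$l$ of level-$k$ blobs: it picks a lowest level-$k$ blob~$B$ with leaf-descendant set~$A$, observes that the pendant subnetworks of~$N^{mlls}_1$ and~$N^{mlls}_2$ rooted at the pure node realizing~$A$ are exactly the cut/isolate maximum subnetworks of the pendant subnetwork~$N_A$ of~$N$, applies Lemma~\ref{lem:MinNumberMStoReconstructBT} to recover~$BT(N_A)$, then collapses~$N_A$ (and its counterparts in the MLLSs) to a leaf, recurses on the resulting network with~$l-1$ level-$k$ blobs, and reglues via Lemma~\ref{lem:BTSetminusPendantSubtree}; this collapsing trick means the paper never has to reason about deletions performed in many blobs simultaneously. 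You instead prove directly that~$\cF(N)$ is the intersection of the foundation-node sets of the two MLLSs --- which is precisely the statement the paper only records as a remark \emph{after} the lemma --- and your burden is the localisation step: any spurious label~$A$ is attributable to the unique smallest blob~$B$ of~$N$ whose descendant set contains~$A$, its appearance depends only on the single edge deleted inside~$B$, and then the per-blob disjointness~$F'\cap F''=\emptyset$ from the proof of Lemma~\ref{lem:MinNumberMStoReconstructBT} finishes the contradiction. That localisation is the same one the paper itself asserts in the second half of the proof of Theorem~\ref{thm:BTNodeLabelPreservationMLLS}, so the two routes carry comparable burdens; yours buys the explicit intersection formula (and hence the computational recipe) in one shot, while the paper's induction is more self-contained and mirrors the collapse-and-recurse structure reused later in Algorithm~\ref{alg:downupTCReconstruction}. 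Two small repairs are needed in your write-up: the localisation must also note that deletions in other blobs preserve the \emph{pure-node (biconnectivity) status} of the nodes of~$B$, not only their leaf-descendant sets; and your summary of the disjointness covers only half of the cut case (new foundation nodes created by cutting contain leaves of exactly one of~$desc_N(x)$ or~$desc_N(y)$, not necessarily of~$desc_N(x)$). Also, your closing remark misattributes the role of~$k\geq 2$: lowest reticulated cherry shapes exist in every level-$k$ blob already for~$k\geq 1$ by Lemma~\ref{lem:blobseesretcherryshape}, so this is not where that hypothesis enters; neither of these points affects the validity of your argument.
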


 \begin{proof}
 We prove the lemma by induction on the number of level-$k$ blobs~$l$ in~$N$.
 For the base case, there is only one level-$k$ blob in~$N$. 
 By Lemma~\ref{lem:MinNumberMStoReconstructBT}, we are done.
 
 So suppose now that~$N$ contains~$l \geq 2$ level-$k$ blobs.
 Consider a lowest level-$k$ blob~$B$ in~$N$, and let~$A$ denote the leaf-descendant set of~$B$.
 Let~$N_A$ and~$N^{mlls}_{iA}$ denote the pendant subnetwork of~$N$ and~$N^{mlls}_i$ rooted at the pure node with leaf-descendant set~$A$, for~$i=1,2$.
 By Theorem~\ref{thm:BTNodeLabelPreservationMLLS},~$A$ is a blob node in~$BT(N^{mlls}_i)$ for~$i=1,2$, and therefore such pendant subnetworks exist.
 Note that the pendant subnetworks~$N^{mlls}_{iA}$ are maximum subnetworks of~$N_A$ obtained by cutting and isolating the reticulated cherry shape associated with some lowest reticulation~$r$.
 By Lemma~\ref{lem:MinNumberMStoReconstructBT}, we have that~$N^{mlls}_{1A}$ and~$N^{mlls}_{2A}$ suffice to reconstruct~$BT(N_A)$.
 We now collapse~$N^{mlls}_{iA}$ from the MLLS~$N^{mlls}_i$ for~$i=1,2$.
 Furthermore we collapse~$N_A$ from the network~$N$.
 Note that~$N\setminus N_A$ is a level-$k$ tree-child network with~$l-1$ level-$k$ blobs, and that~$N^{mlls}_i\setminus N^{mlls}_{iA}$ are MLLSs of~$N\setminus N_A$ obtained by cutting and isolating a lowest reticulated cherry shape from every level-$k$ blob, for~$i=1,2$ respectively.
 By the induction hypothesis, these two MLLSs of~$N\setminus N_A$ suffice to reconstruct~$BT(N\setminus N_A)$.
 Now by Lemma~\ref{lem:BTSetminusPendantSubtree}, we have that~$BT(N)$ is the blob tree obtained by appending~$BT(N_A)$ to~$BT(N\setminus N_A)$.
 We append~$BT(N_A)$ to the node~$C$ in~$BT(N\setminus N_A)$, such that~$A\subseteq C$, and there exists no node~$D\in BT(N\setminus N_A)$ where~$A\subseteq D\subseteq C$.
\end{proof}

 \review{Given~$N, N^{mlls}_1$, and~$N^{mlls}_2$ as in the setting of Lemma~\ref{lem:MinNumberMLLStoReconstructBT}, the foundation nodes of~$N$ can be found by taking the intersection of the foundation nodes of~$N^{mlls}_1$ and that of~$N^{mlls}_2$.
 Then,~$BT(N)$ can be reconstructed as in the proof of Theorem~\ref{thm:BloBTreeReconstruction}.}

\subsection{Identifying the level-$k$ blobs of a Tree-child network}\label{subsec:BlobIdentification}

We now show that given the MLLSs, it is possible to identify which foundation nodes correspond to a level-$k$ blob in the original tree-child network.

\begin{lemma}\label{lem:blobgraphnew12}
Let~$N$ be a level-$k$ tree child network with~$k\geq 2$. A blob of~$N$ is level-$k'<k$ if and only if the  set of \review{children} of the corresponding blob node in~$BT(N^{mlls})$, for every~$N^{mlls}\in \cN^{mlls}(N)$, is precisely the set of \review{children} of the blob node in~$BT(N)$.
\end{lemma}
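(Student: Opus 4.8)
The plan is to prove the two implications separately, relying on the foundation-node framework of Section~\ref{subsec:BlobTreeReconstruction}. Write $A$ for the leaf-descendant set of the blob $B$ under consideration, so that $A$ is the blob node in question. Since $A$ is a foundation node of $N$, Theorem~\ref{thm:BTNodeLabelPreservationMLLS} ensures that $A$ is a blob node of $BT(N^{mlls})$ for every $N^{mlls}\in\cN^{mlls}(N)$, so the statement is well-posed. I will use repeatedly that the children of $A$ in a blob tree are exactly the foundation nodes $C\subsetneq A$ that are maximal with respect to inclusion among foundation nodes properly contained in $A$.

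For the forward direction, suppose $B$ has level $k'<k$. An MLLS only deletes edges inside level-$k$ blobs, so $B$ is never modified and $A$ remains a pure node. First, every child $C$ of $A$ in $BT(N)$ survives: $C$ is a foundation node of $N$, hence of $N^{mlls}$ by Theorem~\ref{thm:BTNodeLabelPreservationMLLS}, and no foundation node can newly appear strictly between $A$ and $C$, since any newly created foundation node lives inside some modified level-$k$ blob $B''\neq B$ and is a proper subset of the (preserved) foundation node $desc_N(B'')$. Second, no new child of $A$ appears: a candidate new child $E\subsetneq A$ would again satisfy $E\subsetneq desc_N(B'')$ for a modified blob $B''$, and I would rule out each position of $B''$ relative to $B$ — if $desc_N(B'')\subsetneq A$ then $desc_N(B'')$ is a foundation node lying strictly between $A$ and $E$, so $E$ is not maximal; and if $B''$ is above or incomparable to $B$ then the descendant-set geometry forces $E\not\subsetneq A$. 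Hence the child set of $A$ is identical in $BT(N)$ and $BT(N^{mlls})$.

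For the converse I would prove the contrapositive: if $B$ has level exactly $k$, I exhibit a single MLLS changing the children of $A$. Let $N^{mlls}$ be obtained by isolating a lowest reticulated cherry shape $\langle x,y\rangle$ of $B$ (and deleting arbitrary valid edges from the other level-$k$ blobs). As in Figure~\ref{fig:RetCherryIsolateBT} and the discussion preceding Observation~\ref{obs:DiffBlobTreeAfterDeletion}, the node $p_x$ becomes a pure node with leaf-descendant set $A'=desc_N(x)\cup desc_N(y)$; because $k\geq 2$, isolating one reticulation leaves $B$ of level $k-1\geq 1$ with top still labelled $A$, so $A'\subsetneq A$ and, as $p_x$ is an interior (non-top) node of $B$, $A'\notin\cF(N)$. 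Thus $A'$ is not a child of $A$ in $BT(N)$, whereas $p_x$ sits immediately below the reduced blob $B$, making $A'$ a child of $A$ in $BT(N^{mlls})$. The child sets therefore differ, which is what the contrapositive requires.

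The step I expect to be the main obstacle is the second half of the forward direction: verifying that a reticulation-edge deletion inside a level-$k$ blob positioned above, below, or incomparable to $B$ can never inject a spurious foundation node immediately beneath $A$. The containment $E\subsetneq desc_N(B'')$ together with uniqueness of blob-node labels (Corollary~\ref{cor:BTUniqueLabel}) disposes of blobs at or below a child of $A$, but blobs above or incomparable to $B$ require a short argument that the new pure node either dominates all of $A$ (giving $E\supseteq A$) or none of it (giving $E\cap A=\emptyset$), so that $E\subsetneq A$ is impossible.
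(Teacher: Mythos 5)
Your forward direction is sound, and it is actually more explicit than the paper's own argument. The paper argues that each cut-edge from an unmodified blob $B$ into a blob directly below it survives with its leaf set intact (so every old child persists), and then simply asserts the child sets coincide, glossing over the possibility of \emph{new} children appearing; you address that half via maximality of foundation nodes. The sub-case you flag (a modified level-$k$ blob $B''$ above or incomparable to $B$) does close: blob-tree labels of the tree-child MLLS are pairwise nested-or-disjoint, so a new label $E\subsetneq A$ would force its pure node to lie below the pure node of $B$ in $N^{mlls}$, hence below it in $N$ (directed paths of $N^{mlls}$ are directed paths of $N$), which is impossible for a node of $B''$.

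The genuine problem is in your contrapositive. You claim that isolating a lowest reticulated cherry shape leaves $B$ as a \emph{single} blob of level $k-1$ with top labelled $A$, so that $p_x$ hangs immediately below it and $A'=desc_N(x)\cup desc_N(y)$ becomes a \emph{child} of $A$ in $BT(N^{mlls})$. That step fails: deleting one reticulation edge from a biconnected component can split it into a chain of stacked blobs, and then $A'$ sits deeper than a child. Concretely, take a level-$3$ blob with top $t$ having children $a_1,a_2$, both parents of a reticulation $r_1$; let $g_y$ be a tree-node child of $a_2$; let the child $z$ of $r_1$ have children $b_1,b_2$, both parents of a reticulation $r_2$; let the other child of $b_2$ be $p_x$, with leaf child $x$ and reticulation child $p_y$, whose second parent is $g_y$ (pendant leaves attached elsewhere as needed for tree-childness). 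This is biconnected and tree-child, and $\langle x,y\rangle$ is a lowest reticulated cherry shape; isolating it deletes $(g_y,p_y)$, after which the remnant decomposes into the two stacked blobs $\{t,a_1,a_2,r_1\}$ (labelled $A$) and $\{z,b_1,b_2,r_2\}$, with $p_x$ hanging below the latter. So $A'$ is a grandchild, not a child, of $A$. The conclusion you need is still true, but requires a repair: let $D$ be the child of $A$ on the $BT(N^{mlls})$-path from $A$ down to $A'$; then $D\supseteq A'$, and since $desc_N(x)$ and $desc_N(y)$ are the descendant sets of two \emph{distinct} pendant pieces below $B$ (each either a direct pendant leaf of $B$, or exactly one child of $A$ in $BT(N)$), no single old child of $A$ contains both, so $D$ is a new label and the child sets of $A$ differ. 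Note that the paper's proof, while also terse at this point (it only asserts the existence of a new pure node, via Observation~\ref{obs:DiffBlobTreeAfterDeletion}, and concludes), does not commit to your false intermediate claim.
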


\begin{proof}
 Suppose~$B$ is a level-$k'<k$ blob in~$N$.
 Then~$B$ remains intact (no reticulation edges deleted) in all MLLSs of~$N$.
 Let~$B'$ be a blob in~$N$ that is directly below~$B$, and let~$e$ denote the outgoing edge from~$B$ to the pure node of~$B'$.
 The edge~$e$ is not suppressed in any MLLS of~$N$.
 And since edges are deleted to obtain MLLSs of~$N$, we have that the number of leaves that are below the edge~$e$ (below the child of~$e$) stays the same since blobs are biconnected.
 By Theorem~\ref{thm:BTNodeLabelPreservationMLLS}, every node in~$BT(N)$ is a node in~$BT(N^{mlls})$ for all MLLSs~$N^{mlls}$ of~$N$.
 Furthermore for tree-child networks, the node labels in blob trees are unique.
 Then the blob node of~$B$ must have the blob node of~$B'$ as one of its \review{children} in the blob tree of all MLLSs.
 Since~$B'$ was chosen arbitrarily, this implies that the \review{set of children} of the blob node in~$BT(N^{mlls})$, for every~$N^{mlls}\in \cN^{mlls}(N)$, is precisely the \review{set of children} of the blob node in~$BT(N)$.
 
 For the other direction, we prove the contrapositive.
 Suppose~$B$ is a level-$k$ blob in~$N$, and let~$desc_N(B) = A$.
 By Observation~\ref{obs:DiffBlobTreeAfterDeletion}, we can isolate a lowest reticulated cherry in~$B$ to obtain an MLLS~$N^{mlls}$ of~$N$ where~$BT(N^{mlls})$ is different from~$BT(N)$.
 In this construction of~$N^{mlls}$, there exists a pure node in~$N^{mlls}$ which was not a pure node in~$N$.
 Then the \review{set of children} of~$A$ in~$BT(N^{mlls})$ is not the same as the \review{set of children} of~$A$ in~$BT(N)$.
\end{proof}

\begin{figure}[h!]
 \centering
 \includegraphics[width=\textwidth]{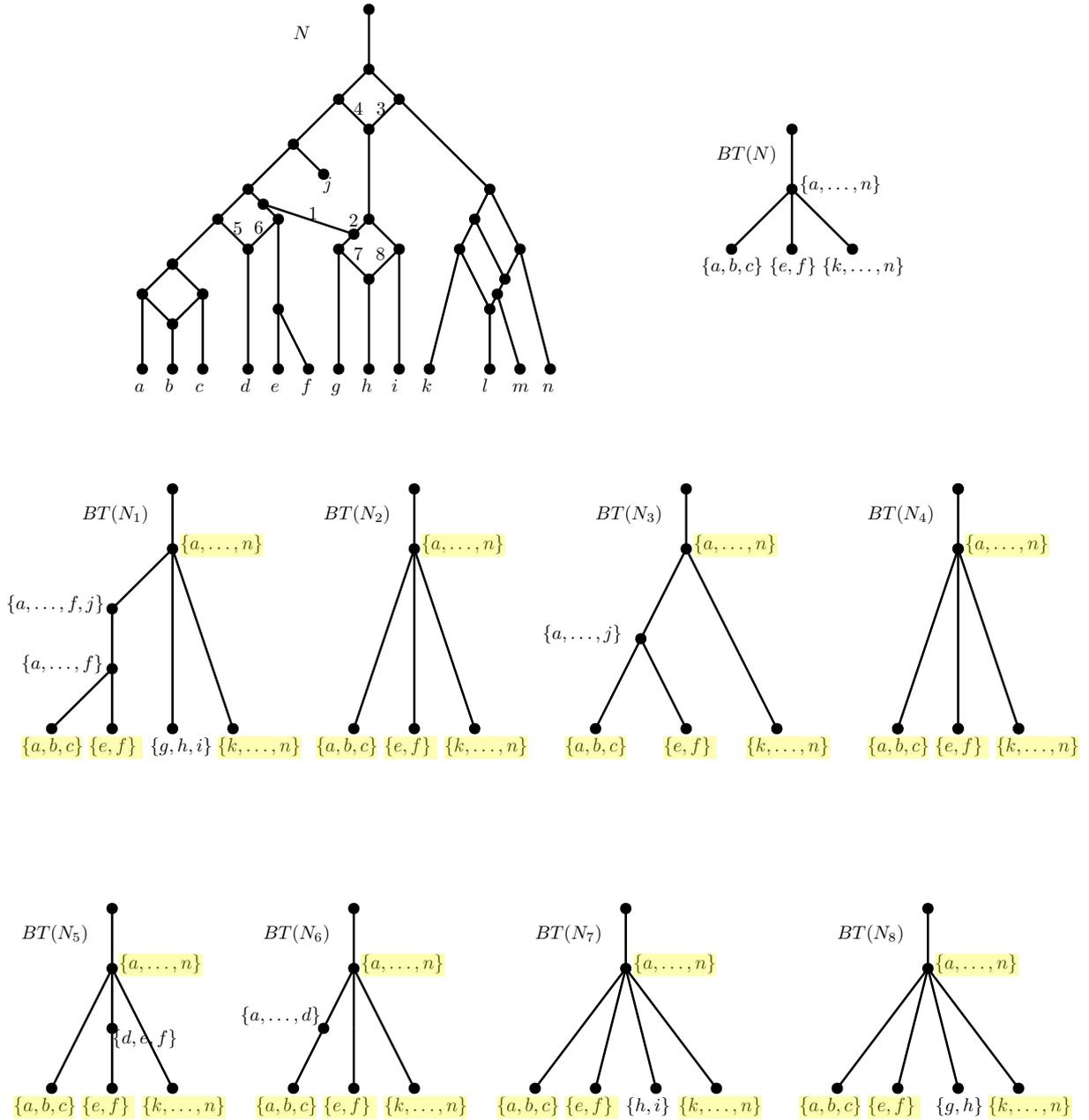}
 \caption{$N_i$ for~$i = 1,2,\dots, 8$ refers to the MLLS of a level-4 tree-child network~$N$ obtained by deleting the reticulation edge~$i$.
~$BT(N)$ is the blob tree of the network~$N$ and~$BT(N_i)$ is the blob tree of~$N_i$ for each~$i$.
 The foundation nodes are highlighted in yellow.}
 \label{fig:BTNeighborChange}
\end{figure}

Figure~\ref{fig:BTNeighborChange} illustrates Lemma~\ref{lem:blobgraphnew12} with a level-4 tree-child network~$N$.
The blob trees of its MLLSs are taken, from which the blob tree of~$N$ can be reconstructed (Theorem~\ref{thm:BTNodeLabelPreservationMLLS}). Then, it can be seen that the set of \review{children} of the blob node~$\{a,\dots,n\}$ in~$BT(N_i)$ for~$i = 1,3,5,6,7,8$ differs from the the set of \review{children} of~$\{a,\dots,n\}$ in~$BT(N)$. Hence, the blob with leaf-descendant set $\{a,\dots,n\}$ is of level-4.
Since the \review{children} of the other blob nodes do not change, the blobs with leaf-descendant sets~$\{a,b,c\}$, $\{e,f\}$ and $\{k,\dots,n\}$ are blobs of level lower than~4.

\section{Leaf pair analysis}\label{sec:LeafPairAnalysis}

In order to reconstruct a tree-child network from its MLLSs, we require a way of locating the position of the missing reticulation edges.
In this section, we show that studying the topology of a leaf pair in the MLLSs gives enough information to infer the topology of those same leaves in the original network.
The next section will show how we can use this to find the location of the missing reticulation edge of each blob by choosing the appropriate leaf pair. 

We use the inter-node distance as defined by Bordewich and Semple~\cite{bordewich2016determining}.
For our purposes, we slightly tweak the definition by allowing the endpoints to be non-leaf nodes.

\begin{definition}
 Let~$N$ be a network and let~$x,y\in N$.
 An \emph{up-down path} of length~$p$ from~$x$ to~$y$ is a sequence of nodes~$x=v_0, v_1, v_2, \dots, v_{p-1}, v_p=y$ in~$N$, such that for some~$0\leq i\leq p$,~$N$ contains the edges
 \begin{equation*}
  (v_i,v_{i-1}),\dots, (v_1,x)
 \end{equation*}
 and
 \begin{equation*}
  (v_i,v_{i+1}), (v_{i+1},v_{i+2}), \dots, (v_{p-1},y).
 \end{equation*}
 The node~$v_i$ is the \emph{apex} of this up-down path. 
 The length of a shortest~$xy$ up-down path~$P$ in~$N$ is denoted~$d_N(x,y)$.
\end{definition}

Note that the shortest up-down distance $d_N(x,y)$ in a network~$N$ may not necessarily be the shortest distance in the underlying undirected graph of~$N$ (where the underlying undirected graph of~$N$ is obtained by replacing every directed edge by an undirected edge), see Figure~\ref{fig:ShortestUDPath}.

\begin{figure}[h!]
 \centering
 \includegraphics[scale=0.5]{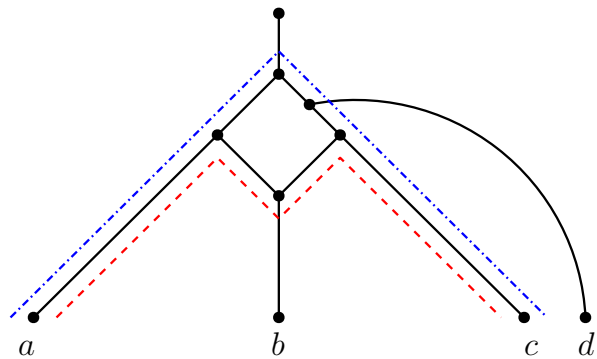}
 \caption{A network~$N$ on 4 leaves.
 The shortest~$ac$ up-down distance is 5 (blue dash-dotted path) however, the shortest~$ac$ distance in the underlying undirected graph of~$N$ is 4 (red dashed path).}
 \label{fig:ShortestUDPath}
\end{figure}

Let~$Q$ be an up-down path between nodes~$u$ and~$v$ of length at least~2 in a tree-child network~$N$.
An edge~$(u,v)$, if it exists, is called a \emph{shortcut}.  
In some papers, the notion of a shortcut (also known as a redundant arc) is defined on directed paths rather than on up-down paths~\cite{bordewich2018recovering, willson2010properties}.
For the purposes of this paper and since a directed path is by definition an up-down path (without the `up' portion), we define shortcuts on the up-down paths.
Call an up-down path which has no shortcuts in~$N$ a \emph{shortcut free} up-down path.
Note that shortest up-down paths are necessarily shortcut free.
Let~$N'$ be a maximum subnetwork of~$N$ obtained by deleting some reticulation edge~$(u,r)$. 
Let~$P'$ be an~$xy$ up-down path in~$N'$ for nodes~$x,y$.
Reinsert the edge~$(u,r)$ in~$N'$. 
Then the~$xy$ up-down path~$P'$, together with any nodes in~$\{u,r\}$ that \review{intersect} some edge of~$P'$, is called the \emph{embedded path} of~$P'$ in~$N$.

\begin{lemma}\label{lem:EdgeDeletionReducesOnly1Distance}
 In a tree-child network, deleting a single reticulation edge can reduce the up-down distance between any two leaves by at most one.
\end{lemma}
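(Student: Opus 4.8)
The plan is to argue by contrapositive on the paths themselves: I would take a shortest up-down path between two leaves in the subnetwork $N'$ and show that it lifts to an up-down path in the original network $N$ whose length exceeds it by at most one. Let $N'$ be obtained from $N$ by deleting a single (valid) reticulation edge $(u,r)$ and cleaning up, and let $x,y$ be two leaves. Write $P'$ for a shortest $xy$ up-down path in $N'$, so that its length is $d_{N'}(x,y)$. My goal is to produce an $xy$ up-down path in $N$ of length at most $d_{N'}(x,y)+1$, which immediately gives $d_N(x,y)\le d_{N'}(x,y)+1$, i.e.\ the deletion reduces the distance by at most one.

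The key object is the \emph{embedded path} of $P'$ in $N$, defined in the excerpt just before the lemma: we reinsert the edge $(u,r)$ into $N'$ and reintroduce whichever of the suppressed endpoints $u,r$ lay on an edge of $P'$. Since the deletion was valid (Lemma~\ref{lem:DeletingRetEdgeTCValid} guarantees every reticulation edge of a tree-child network is valid), cleaning up suppressed exactly the two nodes $u$ and $r$ and introduced no parallel-edge merging; so each edge of $N'$ either is an edge of $N$ or is the single edge that replaced the length-two path $u\to r$ (more precisely, the path through the suppressed nodes). First I would observe that at most one edge of $P'$ is a ``merged'' edge created by suppressing $u$ or $r$: because $u$ and $r$ are adjacent along the deleted edge, the suppression affects a single localized portion of the graph, and an up-down path — being a walk that reverses direction at most once — meets this portion in at most one edge. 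Reinserting $(u,r)$ and the nodes $u,r$ therefore \emph{subdivides} at most one edge of $P'$, turning it into a path one edge longer while leaving every other edge (and the up/down orientation pattern relative to the apex) intact.

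The main obstacle, and the step I would treat most carefully, is verifying that the embedded path is genuinely an up-down path in $N$ and not merely an undirected walk — in other words, that reinserting $u,r$ does not destroy the single ``up then down'' structure around the apex. The delicate case is when the suppressed portion straddles the apex $v_i$ of $P'$, since there the walk changes direction and one must check that the reinserted directed edges still point consistently (all toward the apex on the up-leg, all away on the down-leg). Here I would use that $r$ is a reticulation and $u$ a tree node (by the tree-child property), so the reinserted edge $(u,r)$ is oriented downward into $r$; combined with the fact that the replaced edge of $P'$ already carried a fixed orientation relative to the apex, reinserting the two nodes can only lengthen one monotone leg by one and cannot force a second direction reversal. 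Once this orientation bookkeeping is done, the embedded path is an $xy$ up-down path in $N$ of length at most $d_{N'}(x,y)+1$, establishing $d_N(x,y)\le d_{N'}(x,y)+1$ and hence the lemma.
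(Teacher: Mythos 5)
Your overall strategy---lift a shortest $xy$ up-down path of $N'$ back into $N$ and bound the length increase by one---is the same as the paper's, but the central claim you use to carry it out is false. Deleting the reticulation edge $(u,r)$ and cleaning up suppresses \emph{two} nodes in two different places: suppressing $u$ merges the path $p_u \to u \to c_u$ into the single edge $(p_u,c_u)$ (where $p_u$ is the parent of $u$ and $c_u$ its other child), while suppressing $r$ merges $v \to r \to c_r$ into $(v,c_r)$ (where $v$ is the other parent of $r$ and $c_r$ its child). These are two distinct merged edges of $N'$, not ``a single localized portion,'' and nothing prevents a shortest up-down path in $N'$ from using both; they can even lie on opposite sides of the apex. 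Concretely, let $u$ have children $x$ (a leaf) and $r$, let $r$ have child $y$ (a leaf), and let $p_u$ and $v$ each have a second leaf child and a common parent $w$. This network is tree-child, and in $N'$ the unique (hence shortest) $xy$ up-down path is $x,p_u,w,v,y$, which uses both merged edges $(p_u,x)$ and $(v,y)$. Its embedding in $N$ is $x,u,p_u,w,v,r,y$, of length $|P'|+2$, not $|P'|+1$. (Your sentence describing the merged edge as ``the single edge that replaced the length-two path $u\to r$'' suggests you are picturing the deleted edge as being contracted; it is deleted outright, and the two suppressions happen on either side of it.)

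This two-merged-edge case is exactly where the paper's proof does its real work, and your argument has no substitute for it: when both $u$ and $r$ lie on the embedded path $P$, the reinserted edge $(u,r)$ is a \emph{shortcut} of $P$ (the subpath of $P$ between $u$ and $r$ has length at least $2$, since the edge $(u,r)$ itself is not on $P$), and replacing that subpath by the single edge $(u,r)$ yields a path $P^*$ in $N$ with $|P^*| \leq |P|-1 = |P'|+1$. One must also verify that $P^*$ is still an up-down path; this holds because acyclicity of $N$ rules out the relative orderings of $u$ and $r$ along $P$ for which taking the shortcut would force a second direction reversal. In the example above, the shortcut path is exactly $x,u,r,y$, recovering $d_N(x,y)=3\leq d_{N'}(x,y)+1$. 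Without this step, your argument only gives $d_N(x,y)\leq d_{N'}(x,y)+2$ in the problematic case, which does not prove the lemma. The remainder of your outline (subdividing edges of an up-down path by reinserted degree-two nodes preserves the up-down structure) is correct, but it handles only the easy cases where at most one of $u,r$ is reinserted onto the path.
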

\begin{proof}
 Let~$N$ be a tree-child network and let~$N'$ be a maximum subnetwork of~$N$ obtained by deleting some reticulation edge~$(u,r)$.
 Let~$v$ be the parent of~$r$ in~$N$ that is not~$u$.
 Take any~$xy$ up-down path~$P'$ in~$N'$, and let~$P$ be its embedded path in~$N$.
 Let~$P^*$ be an up-down path in~$N$ derived from~$P$ by taking the shortcut~$(u,r)$ if it is a shortcut in~$P$.
 We show that~$|P^*|\leq |P'| + 1$.
 Now compared to~$P'$, the up-down path~$P$ contains at most 2 additional nodes - the nodes~$u$ and~$r$.
 If it contains:
 \begin{itemize}
  \item 0 additional nodes then~$(u,r)$ cannot be a shortcut of the embedded path~$P$. So,~$|P| = |P^*| = |P'|$;
  \item 1 additional node then again,~$(u,r)$ cannot be a shortcut of the embedded path~$P$. So,~$|P| = |P^*| = |P'| + 1$; 
  \item 2 additional nodes then~$(u,r)$ must be a shortcut in~$P$, as otherwise deleting~$(u,r)$ disconnects~$P'$.
  This implies that currently,~$P$ contains all three of the points~$\{u,v,r\}$.
  Then~$|P'| = |P| - 2 \geq |P^*| -1$, where the inequality follows as taking a shortcut reduces the length of an up-down path by at least~$1$.
 \end{itemize}
 It then follows that a single reticulation edge deletion from~$N$ can reduce~$d_N(x,y)$ for any two leaves~$x,y\in N$ by at most~$1$.
\end{proof}

 \setcounter{claimcounter}{0}
 \setcounter{claimproofcounter}{0}

\begin{lemma}\label{lem:PossibleShapes}
 Let~$N$ be a tree-child network. \leoR{For each pair of leaves~$\{x,y\}$, exactly one of the following cases holds (see Figure~\ref{fig:Treechildxyshapes}):}
 \begin{itemize} 
  \item \leoR{$N$ contains} a \emph{cherry}~$\Lambda(x,y)$ with nodes~$a,x,y$ and edges~$(a,x),(a,y)$;
  \item \leoR{$N$ contains} a \emph{cherry subdivided by one tree node}.
  \review{If this tree node is the parent of~$y$, there is a subgraph with nodes~$a,b,c,x,y$ and edges~$(a,x),(a,b),(b,y),(b,c)$, which we call~$\lambda(x,y)$}; 
  \item \leoR{$N$ contains} a \emph{reticulated cherry}, \leoR{which} is a cherry subdivided by one reticulation. \review{If this reticulation is the parent of~$y$, there is a subgraph with nodes~$a,b,c,i,x,y$ and edges~$(a,x),(a,c),(b,c),(c,y),(i,a)$ which we call~$K(x,y)$}; 
  \begin{itemize}
   \item if~$i=b$ then \leoR{we also} call this shape~$A(x,y)$;
   \item if~$i\neq b$ then \leoR{we also} call this shape~$H(x,y)$.
  \end{itemize}
  \item if~$d_N(x,y)\geq 4$, we say that~$N$ contains~$\Pi(x,y)$.
   \end{itemize}
  \leoR{Hence, there are eight possibilities in total:
  $\Lambda(x,y),\lambda(x,y),\lambda(y,x),A(x,y),A(y,x),H(x,y),H(y,x),\Pi(x,y)$, each of which we call a \emph{shape}. However, keep in mind that when~$N$ contains~$\Pi(x,y)$, this does not mean just that there exists an~$xy$ up-down path of length at least~4, but also that there does not exist an~$xy$ up-down path of length at most~3.}
\end{lemma}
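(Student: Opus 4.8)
The plan is to classify the pair $\{x,y\}$ according to the value of the up-down distance $d_N(x,y)$, which is always well defined and at least~$2$ (an up-down path of length~$0$ or~$1$ between two distinct leaves is impossible, since leaves have outdegree~$0$). By the final remark in the statement, $N$ contains $\Pi(x,y)$ precisely when $d_N(x,y)\geq 4$, so I would first dispose of this case and then treat $d_N(x,y)=2$ and $d_N(x,y)=3$ separately. When $d_N(x,y)=2$, any shortest up-down path $x,v_1,y$ has its apex at $v_1$, and since $x$ and $y$ are leaves the edges $(v_1,x)$ and $(v_1,y)$ force $v_1$ to be a common parent; this is exactly the cherry $\Lambda(x,y)$.

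The heart of the argument is the case $d_N(x,y)=3$. Here I would fix a shortest up-down path $x=v_0,v_1,v_2,v_3=y$ with apex $v_i$, $i\in\{1,2\}$, and observe that the node adjacent to $x$ is forced to be the unique parent $p_x$ of the leaf $x$ (the apex cannot be $x$ itself, as $x$ has outdegree~$0$). By symmetry, swapping the roles of $x$ and $y$ produces the $(y,x)$ variants, so I may assume the apex is $v_1=p_x$, so that $p_x$ is a parent of $v_2$ and $v_2$ is the unique parent $p_y$ of $y$. Since $p_x$ has the two distinct children $x$ and $v_2$, it has outdegree~$2$ and is therefore a tree node (and in particular not the root, so it has a parent $i$). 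The classification then comes down to the type of the middle node $v_2=p_y$: it cannot be a leaf (it has the child $y$), so it is either a tree node, giving the subdivided cherry $\lambda(x,y)$ after naming $v_2$'s second child $c$, or a reticulation, giving the reticulated cherry $K(x,y)$ with $b$ the second parent of $v_2$; within the latter the sub-cases $i=b$ and $i\neq b$ yield $A(x,y)$ and $H(x,y)$. I would then verify in each case that all of the edges listed in the statement are present and that the tree-child property is respected (see Figure~\ref{fig:Treechildxyshapes}).

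Finally, to obtain \emph{exactly} one case I would argue mutual exclusivity. The three groups $\{\Lambda\}$, $\{\lambda,A,H$ and their mirrors$\}$, and $\{\Pi\}$ are separated because each shape pins $d_N(x,y)$ to $2$, $3$, and $\geq 4$ respectively; here I must check that none of the $d=3$ shapes secretly admits a length-$2$ up-down path, which follows since in each of them $p_x\neq p_y$. Within the $d=3$ shapes, the $(x,y)$ group is characterised by ``$p_x$ is a parent of $p_y$'' and the $(y,x)$ group by ``$p_y$ is a parent of $p_x$'', and these cannot both hold, as that would create a directed $2$-cycle, contradicting acyclicity. The remaining distinctions ($\lambda$ versus $A$ versus $H$) are immediate, since a node is either a tree node or a reticulation and either $i=b$ or $i\neq b$. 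The main obstacle is simply making the $d_N(x,y)=3$ analysis genuinely exhaustive: one must be careful that the local structure around the near leaf is forced by the degree constraints together with the tree-child property, and that it is the \emph{shortest}-path assumption (not merely the existence of some length-$3$ path) that rules out shortcuts and guarantees the shape is uniquely determined.
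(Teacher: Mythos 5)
Your proposal is correct and takes essentially the same route as the paper: classify the pair by the up-down distance $d_N(x,y)\in\{2,3,\geq 4\}$, with the cherry at distance~$2$, $\Pi(x,y)$ at distance at least~$4$, and at distance~$3$ a split according to whether the middle node $p_y$ (or $p_x$) is a tree node or a reticulation and whether $i=b$. The only difference is that you spell out the degree-forcing and mutual-exclusivity details (acyclicity ruling out both orientations, $p_x\neq p_y$ ruling out a hidden length-$2$ path) that the paper's terse case analysis leaves implicit.
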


\begin{proof}
 We employ the following distance arguments.
 
 \begin{itemize}
  \item If~$d_N(x,y)=2$ then~$N$ must contain~$\Lambda(x,y)$.
  \item If~$d_N(x,y)=3$ then there is at most one reticulation on the shortest~$xy$ up-down path.
  So if in addition we have that
  \begin{itemize}
   \item there are no reticulations on the shortest~$xy$ up-down path. 
   Then~$N$ must contain~$\lambda(x,y)$ or~$\lambda(y,x)$;
   \item there is one reticulation on the shortest~$xy$ up-down path then~$N$ must contain a reticulated cherry~$K(x,y)$ or~$K(y,x)$. \review{If in addition we have that
   \begin{itemize}
    \item the parent of~$x$ and the parent of~$y$ share a common parent, then we say specifically that~$N$ must contain~$A(x,y)$ or~$A(y,x)$;
    \item the parent of~$x$ and the parent of~$y$ do not share a common parent, then we say specifically that~$N$ must contain~$H(x,y)$ or~$H(y,x)$.
   \end{itemize}}
  \end{itemize}
  \item If~$d_N(x,y)\geq4$ then~$N$ must contain~$\Pi(x,y)$.
 \end{itemize}
\end{proof}
 
\begin{figure}[h!]
 \centering
 \includegraphics[width=\textwidth]{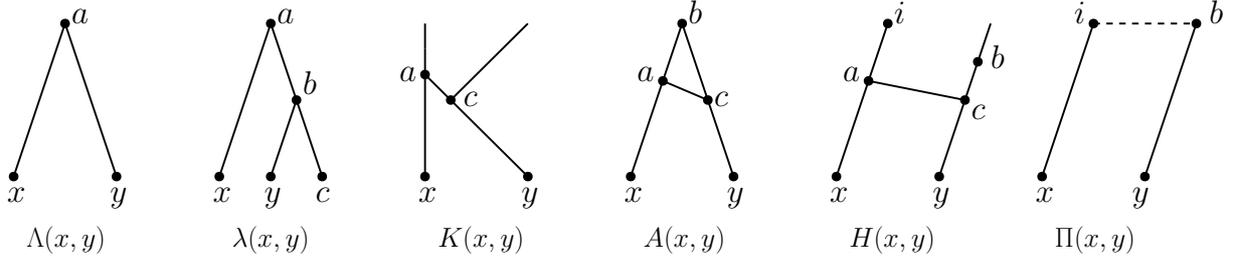}
 \caption{All possible shapes on two leaves~$\{x,y\}$ (up to permuting~$x$ and~$y$).
 The dashed line indicates that any~$ib$ up-down path has length at least~2.}
 \label{fig:Treechildxyshapes}
\end{figure}

We now show that the shape on leaves~$x$ and~$y$ in a tree-child network is identifiable from the shapes on~$x$ and~$y$ in its MLLSs. This is summarized in Table~\ref{tab:allcases}. We start with the following theorem, which shows that each shape is preserved in at least one MLLS.

\begin{theorem}\label{thm:OriginalContainsThenMLLSContains}
 Let~$N$ be a level-$k$ tree-child network where~$k\geq2$, and let~$x,y$ be two leaves in~$N$.
 If~$N$ contains~$\Lambda(x,y)$, $\lambda(x,y)$, $A(x,y)$, $H(x,y)$ or~$\Pi(x,y)$, then there is an MLLS of~$N$ containing~$\Lambda(x,y)$, $\lambda(x,y)$, $A(x,y)$, $H(x,y)$ or~$\Pi(x,y)$ respectively.
\end{theorem}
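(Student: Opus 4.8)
The plan is to prove the statement by a case analysis on which of the five shapes $N$ contains on $\{x,y\}$, and in each case to exhibit one MLLS that preserves it. The guiding principle is locality: each of $\Lambda,\lambda,A,H$ is determined by an up-down path of length at most~$3$ near $x$ and $y$, so only the blob(s) carrying this structure (the blob at, or just below, the apex) can influence the shape. A deletion performed in any other level-$k$ blob changes neither $desc_N(\cdot)$ of nodes at or below the apex nor the length of a shortest $xy$ up-down path whose apex lies below that blob; hence such deletions cannot create or destroy the shape. So I would first dispose of every level-$k$ blob not carrying the structure by isolating a lowest reticulated cherry shape in it (Lemma~\ref{lem:blobseesretcherryshape}), and then concentrate on the blob(s) that do.

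The cases $\Lambda(x,y)$ and $A(x,y)$ are immediate. For $\Lambda(x,y)$ the common parent $a$ has two leaf children, so it lies in no biconnected component and is a level-$0$ blob. For $A(x,y)=K(x,y)$ with $i=b$, the edges $(b,a),(a,c),(b,c)$ form a triangle whose only external links are the leaves $x,y$ and the single edge entering~$b$; this triangle is therefore a maximal biconnected component, i.e.\ a level-$1$ blob. In both cases the defining structure sits in a blob of level at most $1<k$, which is untouched by MLLS formation, so the shape is preserved in \emph{every} MLLS.

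The two remaining finite-distance cases exploit $k\ge 2$, which supplies a second reticulation inside the relevant blob and hence a choice of which reticulation edge to delete. For $\lambda(x,y)$, the shape can only degrade by suppressing $b$, which happens precisely when the edge $(b,c)$ into a reticulation $c$ is deleted, collapsing $\lambda(x,y)$ into $\Lambda(x,y)$; here $b$ is a lowest tree node, $\langle y,z\rangle$ (with $z$ the child of $c$) is a lowest reticulated cherry shape, and \emph{isolating} it deletes $(g_z,c)$ rather than $(b,c)$, so $c$ is suppressed while $b$ keeps both children and $\lambda(x,y)$ survives. For $H(x,y)$ the reticulation $c$ is a lowest reticulation of its blob $B$ with parents $a$ (the parent of $x$) and $b$; preserving the shape means keeping $c$ a reticulation with these two parents and with the parent of $a$ still different from $b$ (so that no triangle, i.e.\ $A(x,y)$, appears). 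Since $k\ge 2$ there is a reticulation $r'\neq c$ in $B$, and I would delete an incoming edge $(p',r')$ chosen so that neither $p'$ nor $r'$ is one of $a,b,c$ and so that the subsequent suppression reparents none of them; by Lemma~\ref{lem:DeletingRetEdgeTCValid} this deletion is valid and, after cleaning up, removes exactly the two nodes $p',r'$ with no further cascade, leaving $(a,x),(a,c),(b,c),(c,y)$ and the parent of $a$ intact, so $H(x,y)$ is preserved.

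For $\Pi(x,y)$ I would argue through distances: here $d_N(x,y)\ge 4$ with no $xy$ up-down path of length at most~$3$, and it suffices to produce an MLLS $M$ with $d_M(x,y)\ge 4$. By the mechanism in the proof of Lemma~\ref{lem:EdgeDeletionReducesOnly1Distance}, a single valid deletion can only shorten $d(x,y)$ by suppressing a reticulation lying on a shortest $xy$ up-down path, so in each level-$k$ blob I would delete an incoming edge of a reticulation that lies on no shortest $xy$ up-down path; the freedom from $k\ge 2$ together with the locality principle makes such a choice available, and the valid-deletion property again prevents any new shortcut, giving $d_M(x,y)\ge 4$. The main obstacle, common to the $H$ and $\Pi$ cases, is exactly this control of the forced deletion inside the blob carrying the structure: one must certify that the chosen reticulation edge always exists and that the cleaning-up it triggers neither reparents a shape-defining node nor manufactures a shorter up-down path between $x$ and $y$. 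The decisive tool is Lemma~\ref{lem:DeletingRetEdgeTCValid}, which confines every reticulation-edge deletion in a tree-child network to the removal of precisely two nodes and three edges with no cascading suppressions, keeping all structural changes strictly local to the deleted edge.
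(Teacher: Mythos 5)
Your cases $\Lambda(x,y)$, $A(x,y)$ and $\lambda(x,y)$ are correct and essentially identical to the paper's proof; the genuine gaps are in the two cases that carry all of the difficulty. For $\Pi(x,y)$ your argument rests on a false premise: it is not true that a single deletion can only shorten $d(x,y)$ by suppressing a \emph{reticulation} on a shortest $xy$ up-down path. Deleting $(u,r)$ also suppresses the tree-node endpoint $u$, and every up-down path through $u$ that avoids the edge $(u,r)$ becomes shorter by one. Concretely, suppose $d_N(x,y)=4$ with unique shortest path $x,u,s,v,y$ (apex $u$), where the tree node $s$ has children $v$ and a reticulation $r$ lying on no shortest $xy$ up-down path. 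Your selection rule permits deleting $(s,r)$; this suppresses $s$, creates the edge $(u,v)$, and yields $d(x,y)=3$, destroying $\Pi(x,y)$. So ``reticulation not on a shortest path'' is the wrong criterion: what is needed is that the deleted edge has \emph{no endpoint} on the shortest path, together with a check that no suppression merges the parents of $x$ and $y$ or makes them adjacent. In addition, $\Pi$ is a global non-existence condition, an MLLS deletes one edge in \emph{every} level-$k$ blob, and each such deletion can lower the distance by one (Lemma~\ref{lem:EdgeDeletionReducesOnly1Distance}); your ``locality principle'' does not dispose of the other blobs here. This is exactly why the paper splits into $d_N(x,y)\geq 5$ (handled by the blob-tree argument that every $xy$ up-down path must still traverse each blob of $\cB$ between fixed entry and exit nodes) and $d_N(x,y)=4$ (handled by counting the at most two reticulation edges incident to the shortest path and by the two-shortest-paths configuration). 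Nothing in your proposal replaces this machinery.

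The $H(x,y)$ case has a smaller but real gap: you want to delete an edge $(p',r')$ into some reticulation $r'\neq c$ of $B$ ``so that the subsequent suppression reparents none of $a,b,c$,'' but such an edge need not exist. Consider the level-2 tree-child blob with edges $(m,p_1)$, $(m,p_2)$, $(p_1,i)$, $(p_1,b)$, $(p_2,i)$, $(p_2,z)$, $(i,a)$, $(a,x)$, $(a,c)$, $(b,c)$, $(b,w)$, $(c,y)$, where $z,w$ are leaves, and $i,c$ are the reticulations. This contains $H(x,y)$, the only choice of $r'$ is $i$, which is itself the parent of $a$, and deleting \emph{either} of its incoming edges suppresses $i$ and reparents $a$; your rule finds no admissible edge and the proof stalls. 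The theorem nevertheless holds in this configuration because the new parent of $a$ cannot be $b$ (by the tree-child property $b$, already a parent of the reticulation $c$, cannot also be a parent of the reticulation $i$), so $H(x,y)$ survives the reparenting. That is precisely the case analysis the paper performs (whether $i$ is incident to the deleted edge, and whether $i$ is the reticulation $r$ or a tree-node parent of it), and it is the step your argument is missing: reparenting of $a$ must be allowed, and what has to be excluded is only that its new parent equals $b$.
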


\begin{proof}
 In this proof, we refer to the node labels used in Lemma~\ref{lem:PossibleShapes}.
 
 The case that~$N$ contains~$\Lambda(x,y)$ is trivial.
 
 Now suppose~$N$ contains~$\lambda(x,y)$.
 If~$c$
 , the sibling of~$y$, is a reticulation then deleting the reticulation edge leading into~$c$ that is not~$(b,c)$ returns an MLLS containing~$\lambda(x,y)$.
 If~$c$ is not a reticulation then deleting any reticulation edge will not affect the shortest~$xy$ up-down path.
 This results in an MLLS containing~$\lambda(x,y)$.\\
 
 
 Suppose~$N$ contains~$A(x,y)$.
 As~$x,y$ are leaves,~$A(x,y)$ is a level-$1$ blob and thus by definition, every MLLS of~$N$ contains~$A(x,y)$.\\
  
 Suppose~$N$ contains~$H(x,y)$.
 If the blob containing the reticulation of~$H(x,y)$ is of level lower than~$k$, then every MLLS of~$N$ contains~$H(x,y)$, and we are done.
 So suppose this blob is level-$k$.
 As~$k\geq2$, there exists a reticulation~$r$, which is not~$c$, with reticulation edges~$e$ and~$f$.
 Let~$N'$ and~$N''$ be the MLLSs of~$N$ obtained by deleting~$e$ and~$f$ (amongst other reticulation edges) respectively.
 We claim that at least one of~$N'$ or~$N''$ contains~$H(x,y)$.
 Indeed, if~$N'$ contains~$A(x,y)$, then in~$N$, either~$b$ or~$i$ must be incident to~$e$, as otherwise~$a$ and~$c$ will still have different parents after deleting~$e$ and cleaning up.
 Now,~$b$ cannot be incident to~$e$ as it violates the tree-child property, regardless of whether~$b$ is the tree node or the reticulation incident to~$e$.
 Then~$i$ must be incident to~$e$.
 If~$i$ is~$r$, then we note that~$b$ cannot be the parent of~$i$ due to the tree-child property.
 This implies that upon deleting~$e$ and cleaning up,~$a$ and~$c$ have different parents, and subsequently~$N'$ contains~$H(x,y)$.
 Thus this case is impossible.
 If, on the other hand,~$i$ is the tree node of~$e$, then neither~$i$ nor~$b$ are suppressed after deleting~$f$ and cleaning up.
 This implies that~$N''$ contains~$H(x,y)$.
\\
 


 Suppose~$N$ contains~$\Pi(x,y)$.
 Suppose first that~$d_N(x,y)\geq 5$.
 Take any~$xy$ up-down path in~$N$, and consider~$BT(N)$.
 Note that any up-down path in~$N$ can be mapped to an up-down path in~$BT(N)$.
 The `up' portion of the path passes through the blob nodes containing~$x$ in their label, until the first blob node containing~$y$ is reached.
 The `down' portion of the path passes through the blob nodes containing~$y$ in their label, until a lowest blob node containing~$y$ is reached.
 In particular, the apex is contained in the lowest blob which contains both~$x$ and~$y$ in their leaf-descendant set.
 So every~$xy$ up-down path in~$N$ passes through the same set of blobs~$\cB$.
 Furthermore, every~$xy$ up-down paths enter and leave the blobs~$B\in\cB$ at the same nodes.
 Let~$t_B$ and~$h_B$ denote these nodes respectively.

 We claim that there is a reticulation edge we can delete from any blob~$B \in \cB$ of level-$k$ such that every~$xy$ up-down path uses at least one edge from~$B$ in the resultant subnetwork.
 We assume~$lvl(B) = k$ as otherwise the claim holds trivially.
 At least one of~$t_B$ or~$h_B$ must be a reticulation, since we enter, pass through, and leave the blob~$B$.
 We consider the cases when they are both reticulations and when~$t_B$ is a reticulation but~$h_B$ is not.
 Suppose first that~$t_B$ and~$h_B$ are both reticulations.
 Then~$B$ must contain the apex of any~$xy$ up-down path; furthermore because of the tree-child property, the shortest~$t_Bh_B$ up-down distance must be at least~3.
 Then deleting a reticulation edge incident to~$h_B$ either disconnects the~$xy$ up-down path or reduces the length by at most~1.
 In any case, at least one edge of~$B$ is still used in the~$xy$ up-down paths in the resultant subnetwork.
 Now suppose that~$t_B$ is the only reticulation.
 Suppose~$h_B$ is not incident to any reticulation edge.
 Since~$lvl(B)=k\geq2$, there exists a reticulation edge we can delete from~$B$, such that neither~$t_B, h_B$, nor the edge~$(t_B,h_B)$ are suppressed.
 Now suppose~$h_B$ is incident to a reticulation edge into a reticulation~$r$.
 If this edge is also incident to~$t_B$, then again since~$lvl(B)=k\geq2$, there exists a reticulation edge we can delete from~$B$, such that neither~$t_B, h_B$, nor the edge~$(t_B,h_B)$ are suppressed.
 Finally if the edge is not incident to~$t_B$, then deleting the reticulation edge incident to~$r$ that is not~$(h_B,r)$ ensures that~$t_B,h_B$, nor~$(t_B,h_B)$ are suppressed.
 In any case, deleting the chosen reticulation edge returns a subnetwork in which an edge of~$B$ is used in every~$xy$ up-down path.


 So if~$|\cB|\geq 2$, then there exists an MLLS~$N^{mlls}$ in which all~$xy$ up-down paths use at least two edges from the blobs in~$\cB$ plus at least three edges connecting the two blobs,~$x$, and~$y$.
 Therefore,~$d_{N^{mlls}}(x,y)\geq 5$.
 If~$|\cB| = 1$ then by Lemma~\ref{lem:EdgeDeletionReducesOnly1Distance}, there exists an MLLS~$N^{mlls}$ with~$d_{N^{mlls}}(x,y)\geq 4$.
 Thus, if~$d_N(x,y)\geq 5$ then there is an MLLS of~$N$ containing~$\Pi(x,y)$ (Figure~\ref{fig:Pi5containsPi}).\\

 \begin{figure}[h]
  \centering
  \includegraphics[scale = 0.4]{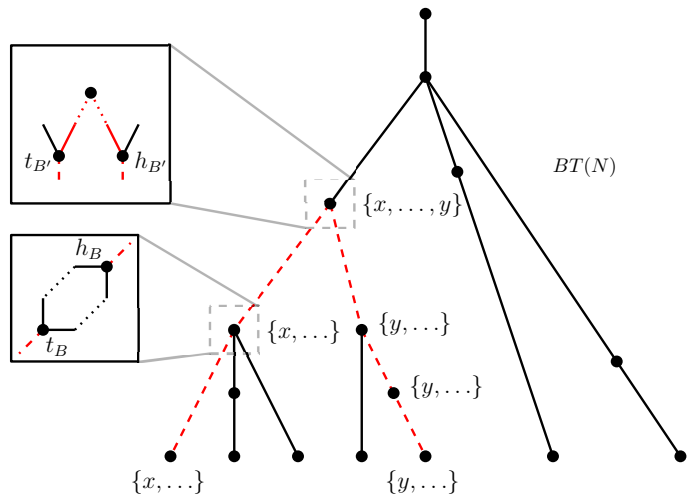}
  \caption{
  Proof visual of Theorem~\ref{thm:OriginalContainsThenMLLSContains},~$d_N(x,y)\geq5$ case.
  The red dashed up-down path in~$BT(N)$ represents the trajectory of every~$xy$ up-down path in~$N$, and consequently the set of blobs~$\cB$ through which every~$xy$ up-down path passes.
  A zoomed-in portion of the two particular blob nodes illustrates the entry point~$t_B$ and exit~$h_B$ in~$N$, and the case for when both points can be reticulations.}
  \label{fig:Pi5containsPi}
 \end{figure}

 Suppose now that~$d_N(x,y)=4$.
 We first show that there are at most 2 shortest~$xy$ up-down paths in~$N$.
 Let~$u,v$ be the parents of~$x,y$ respectively.
 Then any shortest~$xy$ up-down path is always of the form~$(x,u),(u,w),(w,v),(v,y)$ (disregarding directions) where~$w$ is some node in~$N$, and one of~$u,v,w$ is the apex of the shortest up-down path.
 Note that~$u$ and~$v$ are always included in any~$xy$ up-down path, since they are the parents of~$x$ and~$y$ respectively. 
 Therefore, having two shortest~$xy$ up-down paths where~$u$ and~$v$ are the apex in each would create a cycle in~$N$, contradicting the fact that~$N$ is a phylogenetic network.
 Therefore if~$u$ is the apex of a shortest~$xy$ up-down path in~$N$ then there cannot be a shortest~$xy$ up-down path where~$v$ is the apex.
 There can be, however, a second shortest~$xy$ up-down path in~$N$ where~$w$ is the apex. 
 
 Since~$u,v$ are contained in all~$xy$ up-down paths, we have that if two shortest~$xy$ up-down paths have the same apex then they must be the same up-down paths.
 Otherwise the network would not be binary, or there would be parallel edges.
 If there were more than two shortest~$xy$ up-down paths then at least one of~$u$ or~$w$ would have degree greater than~$3$.
 This implies~$N$ is nonbinary, so there can be at most two shortest~$xy$ up-down paths.
 This is shown in Figure~\ref{fig:Distance4IncludesPi}.
 Note that if there are two shortest~$xy$ up-down paths in~$N$ then it must be isomorphic to the one shown in Figure~\ref{fig:Distance4IncludesPi}, as otherwise the only other option would be to have~$w$ and~$w'$ be the apex, in which case~$w'$ would be a parent of 2 reticulations, deeming~$N$ to be not tree-child.
 
 \begin{figure}[h]
  \centering
  \includegraphics[scale=0.45]{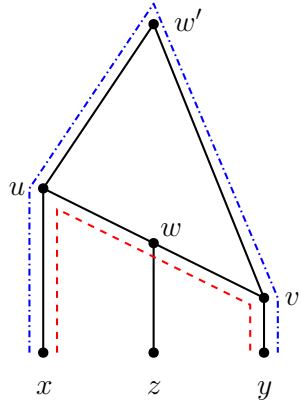}
  \caption{An example of~$2$ shortest~$xy$ up-down paths in~$N$, whenever~$d_N(x,y)=4$. 
  One up-down path (red dashed) is~$(x,u),(u,w),(w,v),(v,y)$ and the other (blue dash-dotted)~$(x,u),(u,w'),(w',v),(v,y)$ (disregarding directions).}
  \label{fig:Distance4IncludesPi}
 \end{figure}
 
 Now we show that if~$d_N(x,y)=4$ then there is always an MLLS of~$N$ containing~$\Pi(x,y)$.
 
 Suppose first that there are two shortest~$xy$ up-down paths.
 Then, as stated before, it is isomorphic to the diagram shown in Figure~\ref{fig:Distance4IncludesPi}.
 There are no reticulation edges incident to either of the shortest paths other than on the reticulation at~$v$. 
 In particular~$(w,z)$ cannot be a reticulation edge because~$N$ is tree-child. 
 As~$lvl(N)\geq2$, there is another reticulation edge~$e$ incident to a reticulation that is not~$v$.
 Indeed, parents of~$x$ and~$y$ remain different and non-adjacent in the MLLS obtained by deleting~$e$.
 This particular MLLS contains~$\Pi(x,y)$.

 Now suppose there is only one shortest~$xy$ up-down path~$P$.
 There are 5 nodes on~$P$ including~$x$ and~$y$, and there are at most two reticulation edges incident to~$P$ and at most one on~$P$ by the tree-child property.
 Since~$lvl(N)\geq2$, there is at least one reticulation edge such that its deletion does not affect~$P$.
 Deleting this reticulation edge and cleaning up ensures that the parents of~$x$ and~$y$ remain different and non-adjacent in the resultant MLLS.
 Therefore, there exists an MLLS of~$N$ which contains~$\Pi(x,y)$.  

 Thus if~$d_N(x,y)=4$ then there exists an MLLS of~$N$ containing~$\Pi(x,y)$.
 Therefore if~$N$ contains~$\Pi(x,y)$, there exists an MLLS of~$N$ containing~$\Pi(x,y)$.\\
\end{proof}


\begin{lemma}\label{lem:IfPithenNoCherryInMLLS}
 For a tree-child network~$N$, if~$N$ contains~$\Pi(x,y)$ then no MLLS of~$N$ contains~$\Lambda(x,y)$.
\end{lemma}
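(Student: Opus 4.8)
The plan is to argue by contradiction: suppose some MLLS $N^{mlls}$ of $N$ contains the cherry $\Lambda(x,y)$, and let $a$ be the common parent of $x$ and $y$ in $N^{mlls}$. Recall that ``$N$ contains $\Pi(x,y)$'' means precisely (by the remark after Lemma~\ref{lem:PossibleShapes}) that $N$ has \emph{no} $xy$ up-down path of length at most $3$; so it suffices to lift the cherry back into $N$ and either exhibit such a short path or contradict the defining property of an MLLS. Throughout I use that $N$ is tree-child, so by Lemma~\ref{lem:DeletingRetEdgeTCValid} every reticulation-edge deletion is valid: cleaning up only suppresses the two endpoints of each deleted edge and never creates parallel edges. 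Hence every node of $N^{mlls}$ is a node of $N$, and the nodes removed in passing from $N$ to $N^{mlls}$ are exactly the endpoints of the deleted reticulation edges.

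First I would lift the cherry. One checks that $a$ survives cleaning up with outdegree $2$, so it is a tree node of $N$, and that the edges $(a,x),(a,y)$ of $N^{mlls}$ correspond to internally disjoint directed paths $a\to x$ and $a\to y$ in $N$ whose interior nodes are precisely the suppressed nodes on each side; write $j$ and $l$ for their numbers. Together these paths form an $xy$ up-down path of length $j+l+2$ with apex $a$, so $d_N(x,y)\le j+l+2$. Consequently, if $j+l\le 1$ then $d_N(x,y)\le 3$, contradicting $\Pi(x,y)$; this already disposes of the case $j=l=0$ (where $x,y$ form a cherry already in $N$) and of the case where only one parent is suppressed.

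Next I would control the suppressed nodes on each side. Let $p_x$ be the parent of $x$ in $N$, i.e.\ the suppressed node nearest the leaf $x$. The reticulation edge deleted to suppress $p_x$ lies in a blob $B$ that also contains $p_x$, and $p_x$ is never the top of $B$: a blob-top must have two internally disjoint routes into its blob, which is impossible for a node with a leaf child (and impossible for a reticulation). Since $p_x$ is not the top of $B$, its unique incoming edge lies in $B$, so the parent of $p_x$ also lies in $B$. Iterating up the $x$-side path, if two consecutive nodes are suppressed then the two distinct reticulation edges used to suppress them both lie in the single blob $B$; as an MLLS deletes at most one reticulation edge from any blob, this is impossible. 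Hence $j\le 1$, and symmetrically $l\le 1$.

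Since $d_N(x,y)\ge 4$ while $d_N(x,y)\le j+l+2$, combining with $j,l\le 1$ leaves only the case $j=l=1$. Here $p_x$ and $p_y$ lie in blobs $B_x,B_y$, and by the previous paragraph $a$, being the parent of $p_x$ and also of $p_y$, lies in both $B_x$ and $B_y$; as a node belongs to a unique blob, this forces $B_x=B_y=:B$. Thus suppressing $p_x$ and $p_y$ deleted two reticulation edges from the single blob $B$, again contradicting the MLLS condition, unless one deleted edge suppresses both, in which case $p_x$ and $p_y$ are its two endpoints and hence adjacent, so $x\to p_x\to p_y\to y$ (or its reverse) is an $xy$ up-down path of length $3$, once more contradicting $\Pi(x,y)$. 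This exhausts all cases. The main obstacle is the middle step: pinning down exactly which blob each suppressed parent inhabits and showing that the chain of suppressed ancestors cannot escape a single blob, so that two deletions in one blob are forced; the tree-child property, through validity and the ``no leaf child at a blob-top'' fact, is precisely what drives this.
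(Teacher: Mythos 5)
Your proof is correct and is essentially the paper's own argument: both proceed by contrapositive, lift the cherry of the MLLS back into~$N$, and derive a contradiction from the fact that an MLLS deletes at most one reticulation edge per blob, so that having two suppressed nodes on the lifted path forces two deletions inside a single blob. The differences are purely organizational --- you exclude blob-tops via the leaf-child argument and bound the two sides separately ($j,l\le 1$) before dispatching the $j=l=1$ case through the apex, whereas the paper first rules out pure nodes among all path nodes and then compares arbitrary pairs of suppressed nodes via the apex --- but the driving mechanism and the key structural facts are the same.
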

\begin{proof}
 We prove the contrapositive.
 Suppose one of the MLLSs~$N^{mlls}$ of~$N$ contains~$\Lambda(x,y)$.
 Add the deleted reticulation edges back to~$N^{mlls}$.
 Then every node on a shortest~$xy$ up-down path, excluding the apex \review{and the leaves~$x,y$}, is incident to a reticulation edge.
 We first show that these nodes cannot be pure nodes in~$N$.
 
 Suppose for a contradiction that one of these nodes~$p$ is a pure node in~$N$.
 Then~$p$ must be a tree node, and there must exist two disjoint paths from~$p$ to its reticulation child~$r$.
 Without loss of generality, suppose that~$p$ is above~$x$.
 \review{Since~$p$ must be above~$r$ via a path that does not use the edge~$(p,r)$,}
 there exists a node~$z$ that is above~$x$ and below~$p$ such that~$z$ is above~$r$.
 When we delete the reticulation edges again to obtain~$N^{mlls}$, we must delete two edges from the blob with pure node~$p$, which is impossible.
 We have a contradiction.
 
 Now suppose for a contradiction that there are two nodes~$u,v$ on a shortest~$xy$ up-down path in~$N$ excluding the apex.
 By our assumption,~$u$ and~$v$ are contained in a level-$k$ blob.
 By the above claim, neither~$u$ nor~$v$ can be pure nodes in~$N$, and we note that the blob containing~$u$ contains the apex, and the blob containing~$v$ also contains the apex.
 This implies that~$u$ and~$v$ are contained in the same level-$k$ blob.
 To obtain~$N^{mlls}$, only one of~$u$ or~$v$ can be suppressed. 
 In particular,~$(u,v)$ cannot be an edge in~$N$ as otherwise, this blob would be a level-1 blob.
 This implies that~$N^{mlls}$ does not contain~$\Lambda(x,y)$, a contradiction.
 
 Therefore, there can only be one node on a shortest~$xy$ up-down path in~$N$ excluding the apex, and thus~$d_N(x,y)\leq3$.
 Hence~$N$ does not contain~$\Pi(x,y)$.
\end{proof}

\begin{theorem}\label{thm:ShapeIdentifiability}
 Let~$N$ be a level-$k$ tree-child network where~$k\geq2$, and let~$x,y$ be two leaves in~$N$.
 The shape on~$\{x,y\}$ in~$N$ is identifiable from the shapes on~$\{x,y\}$ in the MLLSs.
\end{theorem}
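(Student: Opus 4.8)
The plan is to turn the statement into an explicit decision rule on the (finite) set $S$ of shapes that the pair $\{x,y\}$ realises across the MLLSs of~$N$, and then to verify that this rule recovers the shape of~$\{x,y\}$ in~$N$ without ambiguity. The key starting observation is Theorem~\ref{thm:OriginalContainsThenMLLSContains}: whichever of the shapes $\Lambda(x,y),\lambda(x,y),A(x,y),H(x,y),\Pi(x,y)$ (or their $x\leftrightarrow y$ mirrors from Lemma~\ref{lem:PossibleShapes}) the network~$N$ realises on~$\{x,y\}$, that very shape occurs in at least one MLLS. Thus the true shape always belongs to~$S$, and it suffices to show that the assignment sending the shape in~$N$ to the set of shapes it can induce in its MLLSs is injective; equivalently, that the rows of Table~\ref{tab:allcases} are pairwise distinguishable.

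To build that table I would determine, for each shape type, exactly which shapes an MLLS can display, using the fact that a single reticulation-edge deletion lowers the $xy$ up-down distance by at most one (Lemma~\ref{lem:EdgeDeletionReducesOnly1Distance}) together with the local structure from Lemma~\ref{lem:PossibleShapes}. The cherry $\Lambda(x,y)$ and the level-$1$ reticulated cherry $A(x,y)$ sit on tree edges (resp.\ in a level-$1$ blob) that no MLLS disturbs, so each is preserved verbatim, giving $S=\{\Lambda(x,y)\}$, resp.\ $S=\{A(x,y)\}$. For $\lambda(x,y)$ the up-down path $x\,a\,b\,y$ uses only tree edges, so its length can never grow; the only possible change is that, when the sibling~$c$ of~$y$ is a reticulation in a level-$k$ blob, deleting the edge $(b,c)$ suppresses~$b$ and yields $\Lambda(x,y)$, so $S\subseteq\{\lambda(x,y),\Lambda(x,y)\}$ with $\lambda(x,y)$ always present. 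The shape $H(x,y)$ is the interesting one: deleting $(b,c)$ suppresses the reticulation~$c$ and produces $\Lambda(x,y)$, deleting the other incoming edge $(a,c)$ can stretch the distance to at least~$4$ and produce $\Pi(x,y)$, and deleting any other edge of the blob leaves $H(x,y)$ intact. Crucially, if the blob of~$c$ has level below~$k$ it is untouched by every MLLS, so $S=\{H(x,y)\}$; and if it has level~$k$ then the MLLS deleting $(b,c)$ exists and forces $\Lambda(x,y)\in S$. Finally, for $\Pi(x,y)$ Lemma~\ref{lem:IfPithenNoCherryInMLLS} guarantees that no MLLS produces $\Lambda(x,y)$, while $\Pi(x,y)$ itself is always present.

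The resulting decision rule reads off the shape in~$N$ as follows: if $\Pi(x,y)\in S$, declare $\Pi(x,y)$ when $\Lambda(x,y)\notin S$ and declare $H(x,y)$ otherwise; if $\Pi(x,y)\notin S$, declare $H(x,y)$ if $H(x,y)\in S$, else $A(x,y)$ if $A(x,y)\in S$, else $\lambda(x,y)$ if $\lambda(x,y)\in S$, else $\Lambda(x,y)$. I would then check this rule against every row of the table to confirm injectivity; the verification uses exactly the facts above, namely that $\Lambda(x,y)$ and $A(x,y)$ never degrade, that $\lambda(x,y)$ degrades at most to $\Lambda(x,y)$, that an $H(x,y)$ producing $\Pi(x,y)$ necessarily also produces $\Lambda(x,y)$, and that $\Pi(x,y)$ never co-occurs with $\Lambda(x,y)$.

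The main obstacle is separating the two shapes that can both send their configuration into a mixture containing $\Pi(x,y)$, namely $H(x,y)$ and $\Pi(x,y)$ themselves: here a single reticulation-edge deletion can move the distance in either direction, so neither the maximal nor the minimal observed distance alone identifies the original. The disambiguation rests entirely on the asymmetry exposed by Lemma~\ref{lem:IfPithenNoCherryInMLLS} together with the level-$k$ argument above (an $H(x,y)$ whose reticulation lies in a level-$k$ blob must expose a $\Lambda(x,y)$, whereas $\Pi(x,y)$ never can). A secondary bookkeeping issue is tracking the orientation of $\lambda$, $A$ and $H$ (i.e.\ distinguishing $H(x,y)$ from $H(y,x)$); this is handled by the same case analysis, recording on which of $x,y$ the reticulation sits in the preserved MLLS, and is precisely what the full Table~\ref{tab:allcases} encodes.
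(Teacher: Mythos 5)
Your overall strategy coincides with the paper's: invoke Theorem~\ref{thm:OriginalContainsThenMLLSContains} so that the true shape always survives into some MLLS, invoke Lemma~\ref{lem:IfPithenNoCherryInMLLS} to separate $\Pi(x,y)$ from $H(x,y)$, and then verify that the map from the shape in~$N$ to the set~$S$ of shapes observable in the MLLSs is injective. The paper packages this as five if-and-only-if claims (one per shape) rather than a priority-ordered decision rule, but the content is the same, and the four verification facts you list at the end are all correct, so your unoriented decision rule is sound.

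Two steps in your supporting analysis, however, do not hold as stated. First, the claim that for $H(x,y)$ ``deleting any other edge of the blob leaves $H(x,y)$ intact'' is false. If the grandparent~$i$ of~$x$ is a tree node whose other child is a reticulation~$r\neq c$ and whose parent is~$b$ (all of which is compatible with the tree-child property), then deleting $(i,r)$ suppresses~$i$, makes~$b$ the common parent of~$a$ and~$c$, and the resulting MLLS contains $A(x,y)$. The paper's own proof of Theorem~\ref{thm:OriginalContainsThenMLLSContains} explicitly entertains this possibility (``if~$N'$ contains~$A(x,y)$\dots'') and only concludes that the MLLS deleting the \emph{other} edge of~$r$ retains $H(x,y)$. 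Your final rule happens to survive this error because it tests $H\in S$ before $A\in S$, but the $H$-row of the table you propose to build is wrong, and any injectivity check relying on $A\notin S$ for that row would be unsound. Second, and more substantively, the orientation step is asserted rather than proven. As literally stated, your rule outputs $H(x,y)$ whenever $\Pi(x,y)\in S$ and $\Lambda(x,y)\in S$, which is the wrong answer when the true shape is $H(y,x)$; your proposed fix---read the orientation off the $H$-shape present in~$S$---is only unambiguous if no MLLS of a network containing $H(x,y)$ can contain $K(y,x)$ (i.e., $A(y,x)$ or $H(y,x)$). This is precisely the ``no MLLSs contain $K(y,x)$'' condition in the paper's Claim~4, and it needs a specific tree-child argument that you never give: a reticulation on~$x$ could only arise in the MLLS obtained by cutting, where $x$'s parent becomes~$i$ and $y$'s parent becomes~$b$, and $K(y,x)$ would then force~$b$ to be the parent of the two reticulations~$i$ and~$c$ in~$N$, contradicting the tree-child property. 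Without this argument your bookkeeping could in principle be ambiguous, so it must be supplied to complete the proof.
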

\begin{proof}
 We now prove a series of claims which state that~$N$ contains a certain shape if and only if there are distinct MLLSs of~$N$ containing certain shape(s) on~$\{x,y\}$, and not containing certain other shape(s) on~$\{x,y\}$.\\

\begin{claime}\label{cla:cherry}
 $N$ contains~$\Lambda(x,y)$ if and only if all MLLSs of~$N$ contain~$\Lambda(x,y)$.
\end{claime}\\
 
\begin{claimproof}
 To show necessity, suppose~$N$ contains~$\Lambda(x,y)$ so that~$d_N(x,y) = 2$.
 Since the parent of~$x$ and~$y$ is a tree node, there is no reticulation edge incident to~$\Lambda(x,y)$.
 Then~$\Lambda(x,y)$ is contained in every maximum subnetwork of~$N$, and therefore in every MLLS of~$N$.

 For sufficiency, suppose for a contradiction that all MLLSs of~$N$ on~$X$ contain~$\Lambda(x,y)$, but~$N$ does not.
 If~$N$ contains~$\lambda(x,y), \lambda(y,x), K(x,y), K(y,x),$ or~$\Pi(x,y)$ then, as these are the only possible shapes and their shapes are preserved in some MLLSs by Theorem~\ref{thm:OriginalContainsThenMLLSContains}, we have our required contradiction.
 Thus the claim holds.
 
\end{claimproof}\\
 
 
\begin{claime}
 $N$ contains~$\lambda(x,y)$ if and only if there exists an MLLS of~$N$ containing~$\lambda(x,y)$ and no MLLSs of~$N$ contain~$\lambda(y,x), K(x,y), K(y,x)$ or~$\Pi(x,y)$.
 \end{claime}\\
 
\begin{claimproof}
 To show necessity note that by Theorem~\ref{thm:OriginalContainsThenMLLSContains}, there is an MLLS of~$N$ that contains~$\lambda(x,y)$.
 The only possible reticulation edge incident to~$\lambda(x,y)$ is at~$b$ whenever~$c$ is a reticulation.
 Deleting the edge~$(b,c)$ returns an MLLS containing~$\Lambda(x,y)$, and deleting the reticulation edge incident to~$c$ that is not~$(b,c)$ returns an MLLS containing~$\lambda(x,y)$.
 All other reticulation edges do not intersect~$\lambda(x,y)$ and hence their deletions do not affect~$\lambda(x,y)$.
 Thus an MLLS of~$N$ does not contain~$\lambda(y,x), K(x,y), K(y,x)$ nor~$\Pi(x,y)$.
 The condition is therefore necessary.
  
 To show sufficiency, suppose for a contradiction that the conditions hold but~$N$ does not contain~$\lambda(x,y)$.
 If~$N$ contains~$\Lambda(x,y)$ then by Claim~1, no MLLSs of~$N$ contain~$\lambda(x,y)$, a contradiction.
 If~$N$ contains~$\lambda(y,x), K(x,y), K(y,x),$ or~$\Pi(x,y)$ then, as these are the only possible shapes and their shapes are preserved in some MLLSs by Theorem~\ref{thm:OriginalContainsThenMLLSContains}, we have our required contradiction.
 The condition is necessary and the claim holds.
 
\end{claimproof}\\

Since~$A(x,y)$ is a level-1 blob in~$N$ for two leaves~$x,y\in X$, Claim~3 is trivially true.\\
 
\begin{claime}\label{cla:rcherryA}
 $N$ contains~$A(x,y)$ if and only if all MLLSs of~$N$ contain~$A(x,y)$.
\end{claime}\\
 
\setcounter{claimproofcounter}{3}

When~$N$ contains~$H(x,y)$, let~$B_H$ be the blob containing the reticulation in~$H(x,y)$.\\
\begin{claime}\label{cla:rcherryH}
 \begin{itemize}
  \item $N$ contains~$H(x,y)$ and~$lvl(B_H)=k$ if and only if there exist distinct MLLSs of~$N$ containing~$\Lambda(x,y)$ and~$H(x,y)$, and no MLLSs of~$N$ contain~$K(y,x)$.
  \item $N$ contains~$H(x,y)$ and~$lvl(B_H)<k$ if and only if all MLLSs of~$N$ contain~$H(x,y)$.
 \end{itemize}
\end{claime}~\\

\begin{claimproof}\label{clapro:rcherryH_lambda}
 We first prove the first statement of the claim.
 We first show necessity.
 Isolating~\review{$\langle x,y\rangle$} returns an MLLS of~$N$ containing~$\Lambda(x,y)$. 
 By Theorem~\ref{thm:OriginalContainsThenMLLSContains}, there is an MLLS of~$N$ which contains~$H(x,y)$.
 For the third condition, suppose for a contradiction that some MLLS~$N^{mlls}$ of~$N$ contains~$K(y,x)$.
 Since we have a reticulation on~$y$ in~$H(x,y)$, and because isolating~\review{$\langle x,y\rangle$} returns~$\Lambda(x,y)$,~$N^{mlls}$ must have been obtained by cutting~\review{$\langle x,y\rangle$}.
 But then we have that the node~$b$, the grandparent of~$y$, has only reticulation children in~$N$, contradicting the tree-child property of~$N$.
 We therefore have necessity.

 To show sufficiency, suppose for a contradiction that the conditions hold but~$N$ does not contain~$H(x,y)$.
 If~$N$ contains~$\Lambda(x,y)$ then by Claim~1, no MLLSs of~$N$ contain~$H(x,y)$, a contradiction. 
 If~$N$ contains~$\lambda(x,y)$ or~$\lambda(y,x)$ then no MLLSs of~$N$ contains~$H(x,y)$ by Claim~2, a contradiction.
 If~$N$ contains~$A(x,y)$ then no MLLSs of~$N$ contains~$\Lambda(x,y)$ by Claim~3, a contradiction.
 If~$N$ contains~$K(y,x)$ then the shape is preserved in some MLLS of~$N$ by Theorem~\ref{thm:OriginalContainsThenMLLSContains}, a contradiction.
 Finally if~$N$ contains~$\Pi(x,y)$ then no MLLS of~$N$ contains~$\Lambda(x,y)$ by Lemma~\ref{lem:IfPithenNoCherryInMLLS}, a contradiction.
 As these are the only possibilities, necessity follows.
 The claim holds for~$lvl(B_H) = k$.\\
 
 We now prove the second statement of the claim.
 We first show necessity.
 Now suppose that~$N$ contains~$H(x,y)$ and~$lvl(B_H)<k$.
 Then none of the reticulation edges in~$B_H$ are deleted to obtain any of the MLLSs of~$N$ by definition.
 It follows that all MLLSs of~$N$ contain~$H(x,y)$.
 
 We now show sufficiency.
 Suppose first that every MLLS of~$N$ contains~$H(x,y)$.
 If~$N$ contained a shape that was not~$H(x,y)$, then there exists an MLLS of~$N$ that contains that particular shape by Theorem~\ref{thm:OriginalContainsThenMLLSContains}.
 As this is a contradiction, we have that~$N$ contains~$H(x,y)$.
 To show that~$lvl(B_H)<k$, we note that if this was not the case, i.e., if~$lvl(B_H) = k$, then we have shown above that an MLLS of~$N$ would contain~$\Lambda(x,y)$, which is a contradiction.
 So we must have that~$N$ contains~$H(x,y)$ and that~$lvl(B_H)<k$.

\end{claimproof}\\

\begin{claime}
 $N$ contains~$\Pi(x,y)$ if and only if there exists an MLLS of~$N$ containing~$\Pi(x,y)$ and no MLLSs of~$N$ contain~$\Lambda(x,y)$.
\end{claime}\\
 
\begin{claimproof}
 We first show necessity. 
 There is an MLLS of~$N$ that contains~$\Pi(x,y)$ by Theorem~\ref{thm:OriginalContainsThenMLLSContains}.
 By Lemma~\ref{lem:IfPithenNoCherryInMLLS}, no MLLSs of~$N$ contains~$\Lambda(x,y)$.
  
 To show sufficiency, suppose for a contradiction that the conditions hold, but that~$N$ does not contain~$\Pi(x,y)$.
 If~$N$ contains~$\Lambda(x,y)$ then by Claim~1, every MLLS of~$N$ contains~$\Lambda(x,y)$, a contradiction.
 If~$N$ contains~$\lambda(x,y)$ or~$\lambda(y,x)$ then no MLLSs of~$N$ contain~$\Pi(x,y)$ by Claim~2, a contradiction.
 If~$N$ contains~$A(x,y)$ or~$A(y,x)$ then all MLLSs of~$N$ contains~$A(x,y)$ or~$A(y,x)$ by Claim~3.
 This is a contradiction as no MLLSs of~$N$ would contain~$\Pi(x,y)$.
 If~$N$ contains~$H(x,y)$ or~$H(y,x)$ then we split into two cases.
 Recall that~$B_H$ is the blob of~$N$ which contains~$H(x,y)$ or~$H(y,x)$.
 If~$lvl(B_H)<k$ then all MLLSs of~$N$ contains~$H(x,y)$ or~$H(y,x)$ by Claim~4.
 This is a contradiction as no MLLSs of~$N$ would contain~$\Pi(x,y)$.
 If~$lvl(B_H)=k$ then there exists an MLLS of~$N$ which contains~$\Lambda(x,y)$ by Claim~4, a contradiction.
 The condition is sufficient.
 The claim therefore holds.

\end{claimproof}

\end{proof}


Theorem~\ref{thm:ShapeIdentifiability} is summarized in table~\ref{tab:allcases}.
The table covers all of the different cases, showing which shapes can appear in MLLSs given the shape that the original network contains.
For any two rows in the table, there is some column in which one row has a check and the other a cross.
Thus, we can distinguish between any two cases just by looking at the MLLSs, and so we can determine the structure between~$x$ and~$y$ on~$N$.
Because the given shapes are the only possibilities between two leaves~$x$ and~$y$, the table covers all possible cases.

\begin{table}[h!]
\begin{center}
\setlength\extrarowheight{2.5pt}
\resizebox{\textwidth}{!}{\begin{tabular}{ c|c|c|c|c|c|c|c|c| }
 
~$N$ contains &~$\Lambda(x,y)$ &~$A(x,y)$ &~$A(y,x)$ &~$H(x,y)$ &~$H(y,x)$ &~$\lambda(x,y)$ &~$\lambda(y,x)$ &~$\Pi(x,y)$\\ [1ex]
 \hline
~$\Lambda(x,y)$ & \cmark & \xmark & \xmark & \xmark & \xmark & \xmark & \xmark & \xmark \\ [1ex]
 \hline
~$A(x,y)$ & \xmark & \cmark & \xmark & \xmark & \xmark & \xmark & \xmark & \xmark \\ [1ex]
 \hline
~$H(x,y)$ & \cmark (\xmark) & \xmark (\xmark) & \xmark (\xmark) & \cmark (\cmark) & \xmark (\xmark) & \qmark (\xmark) & \qmark (\xmark) & \qmark (\xmark) \\ [1ex]
 \hline
~$\lambda(x,y)$ & \qmark & \xmark & \xmark & \xmark & \xmark & \cmark & \xmark &\xmark \\ [1ex]
 \hline
~$\Pi(x,y)$ & \xmark & \qmark & \qmark & \qmark & \qmark & \qmark & \qmark & \cmark \\ [1ex]
 \hline\hline
~$A(y,x)$ & \xmark & \xmark & \cmark & \xmark & \xmark & \xmark & \xmark & \xmark \\ [1ex]
 \hline
~$H(y,x)$ & \cmark (\xmark) & \xmark (\xmark) & \xmark (\xmark) & \xmark (\xmark) & \cmark (\cmark) & \qmark (\xmark) & \qmark (\xmark) & \qmark (\xmark) \\ [1ex]
 \hline
~$\lambda(y,x)$ & \qmark & \xmark & \xmark & \xmark & \xmark & \xmark & \cmark &\xmark \\ [1ex]
 
\end{tabular}}
\end{center}
\caption{Given a tree-child network~$N$ contains one of the~$\{x,y\}$ shapes listed in the first column, for each shape listed on the first row, this shape must appear in an MLLS of~$N$ if there is a checkmark (\cmark), this shape cannot appear in an MLLS if there is a cross (\xmark), and a question mark (\qmark) means either could be possible.
In the rows~$H(x,y)$ and~$H(y,x)$, the non-bracketed marks are for the case~$lvl(B_H)=k$ and the bracketed marks are for the case~$lvl(B_H)<k$.}
\label{tab:allcases}
\end{table}
 
\section{Reconstructibility of Tree-Child Networks}\label{sec:TCReconstruction}
 
In this section, we show that the class of \emph{tree-child networks}, excluding trees and level-1 networks with girth at most~4, is MLLS-reconstructible and thus level-reconstructible and subnetwork-reconstructible (where the girth is the length of a smallest cycle in the underlying undirected graph).
A pair of level-1 networks with girth~3 and \review{a triple of level-1 networks} with girth~4 that is not subnetwork-reconstructible is shown in Figure~\ref{fig:Girth3and4}. 

\begin{figure}[h]
 \centering
 \includegraphics[scale = 0.6]{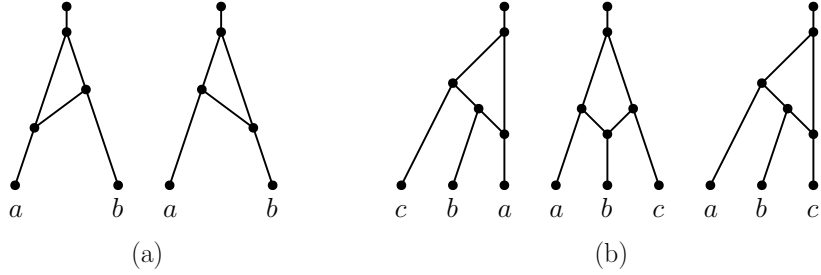}
 \caption{(a) Two non-isomorphic level-1 networks with girth~3 that share the same subnetworks.
 (b) Three non-isomorphic level-1 networks with girth~4 that share the same subnetworks.}
 \label{fig:Girth3and4}
\end{figure}


Following the leaf pair analysis in Section~\ref{sec:LeafPairAnalysis}, we show here that it is possible to infer the location of a missing reticulation edge for level-$k$ blobs from the MLLSs.
By Lemma~\ref{lem:BordewichLemma}, there exists a cherry or a reticulated cherry in every tree-child network.
We know that \review{the common parents within} cherries are level-0 blobs and~$A$ shapes are level-1 blobs.
Then, the reconstruction of level-$k$ blobs can be accomplished by reconstructing an~$H$ shape of every level-$k$ blob.

We start by analyzing the possible shapes on~$x,y$ after cutting a reticulated cherry on~$x$ and~$y$, see Figure~\ref{fig:HSubcases} for examples.

\begin{lemma}\label{lem:HCutShapes}
 Let~$N$ be a tree-child network and suppose~$N$ contains~$H(x,y)$ on a leaf pair~$\{x,y\}$.
 Then the maximum subnetwork obtained by cutting the reticulated cherry~\review{$\langle x,y\rangle$} contains one of~$\lambda(x,y),\lambda(y,x)$, or~$\Pi(x,y)$.
 Furthermore, all other maximum subnetworks of~$N$ contain either~$\Lambda(x,y)$ or~$H(x,y)$.
\end{lemma}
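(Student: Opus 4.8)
The plan is to track exactly how the local structure of the $H(x,y)$ shape changes under each possible reticulation-edge deletion, organising the argument around the few reticulation edges that can touch this structure. Throughout I use the node labels of $H(x,y)$ from Lemma~\ref{lem:PossibleShapes}: I write $p_x=a$ for the parent of $x$, $p_y=c$ for the reticulation parent of $y$, $g_y=b$ for the other parent of $c$, and $i$ for the (unique) parent of the tree node $a$, so that $\langle x,y\rangle$ consists of $x,y,a,b,c$ and, by the definition of $H$ (Figure~\ref{fig:Treechildxyshapes}), $d_N(i,b)\geq 2$. Since $N$ is tree-child, Lemma~\ref{lem:DeletingRetEdgeTCValid} guarantees that every reticulation-edge deletion is valid, so it suppresses only the two endpoints of the deleted edge and triggers no further clean-up; this no-cascade property is what keeps the local bookkeeping under control. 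The only reticulation edges whose deletion can disturb the subgraph on $\{x,y\}$ are $(a,c)$ (cutting), $(b,c)$ (isolating), and reticulation edges incident with $i$; every other deletion leaves $a,b,c$ and the edges $(a,x),(a,c),(b,c),(c,y)$ untouched.

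First I would handle cutting. Deleting $(a,c)$ and suppressing $a$ and $c$ replaces the tree paths $i\to a\to x$ and $b\to c\to y$ by edges $(i,x)$ and $(b,y)$, so the parents of $x$ and $y$ become $i$ and $b$. As $i\neq b$, these are distinct, which rules out $\Lambda(x,y)$. I then exclude every reticulated-cherry outcome. The parent $b$ of $y$ is a tree node (it was the parent of the reticulation $c$, hence a tree node by tree-childness), so no shape $K(x,y)$ can arise. For $K(y,x)$ one would need $i$ to be a reticulation and $b$ to be a parent of $i$; but in the subnetwork the children of $b$ are the leaf $y$ and the tree node or leaf guaranteed by the tree-child property, so $b$ has no reticulation child and cannot be a parent of $i$. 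With $\Lambda$ and all four reticulated cherries excluded, the exhaustive list of Lemma~\ref{lem:PossibleShapes} forces the shape to be $\lambda(x,y)$, $\lambda(y,x)$, or $\Pi(x,y)$.

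Next I would treat every other deletion. For isolating, deleting $(b,c)$ suppresses $c$ (giving $a\to y$) and $b$, so $a$ acquires children $x$ and $y$ and we obtain $\Lambda(x,y)$. For any remaining reticulation edge $e\notin\{(a,c),(b,c)\}$, the tree edges $(a,x),(c,y)$ and the reticulation edges $(a,c),(b,c)$ all survive and, since none of $a,b,c$ is an endpoint of $e$, the no-cascade property ensures none of them is suppressed; hence $c$ is still a reticulation with parents $a,b$ and child $y$, and $\{x,y\}$ still carries a reticulated cherry $K(x,y)$. It remains to check that this is $H(x,y)$ and not $A(x,y)$, i.e.\ that the parent of $a$ does not become $b$. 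The parent of $a$ can change only if $i$ is suppressed, which happens precisely when $e$ is incident with $i$; in that case $a$ inherits either the surviving parent of $i$ (if $i$ is a reticulation) or the parent of $i$ (if $i$ is a tree node with a reticulation child), and in either situation this new parent equals $b$ only if $b$ is adjacent to $i$. This is exactly what $d_N(i,b)\geq 2$ forbids, so the parent of $a$ remains different from $b$ and the shape stays $H(x,y)$.

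The delicate point, and the step I expect to be the main obstacle, is this last verification that no ``other'' deletion can upgrade $H(x,y)$ to $A(x,y)$ by collapsing $i$ onto $b$. It is genuinely needed: were $b$ allowed to be the parent of $i$ (a configuration with $d_N(i,b)=1$), deleting a reticulation edge below $i$ would suppress $i$ and make $b$ the common parent of $a$ and $c$, producing $A(x,y)$. The hypothesis $d_N(i,b)\geq 2$ built into the definition of $H$, together with the validity and no-cascade property of tree-child networks, is precisely what excludes this, so I would take care to invoke both explicitly at this step.
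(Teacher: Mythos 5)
Your overall structure matches the paper's proof: handle cutting by excluding $\Lambda(x,y)$, $K(x,y)$ and $K(y,x)$ and then invoking the exhaustiveness of Lemma~\ref{lem:PossibleShapes}; observe that isolating yields $\Lambda(x,y)$; and argue that any other deletion leaves the reticulated cherry on $\{x,y\}$ intact. Your cutting case is essentially the paper's argument (you exclude $K(y,x)$ directly via the children of $b$, where the paper cites Theorem~\ref{thm:ShapeIdentifiability}, but the underlying reason --- $b$ would need two reticulation children --- is the same).

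The problem is the final step, the one you yourself flag as delicate. Your argument there rests on reading $d_N(i,b)\geq 2$ into the definition of $H(x,y)$. But Lemma~\ref{lem:PossibleShapes} defines $H(x,y)$ only by $i\neq b$; the distance condition appears only in the caption of Figure~\ref{fig:Treechildxyshapes}, and the weaker condition $i\neq b$ is what the classification's exhaustiveness (``exactly one of the cases holds'') and the rest of the paper rely on --- indeed, the proof of Theorem~\ref{thm:OriginalContainsThenMLLSContains} explicitly entertains the configuration in which $b$ is the parent of the tree node $i$, which is tree-child-legal and has $i\neq b$ but $d_N(i,b)=1$. Under the paper's definition your key inference is therefore unavailable, and the step genuinely fails: take $b\to i$, $i\to a$, $a\to x$, $a\to c$, $b\to c$, $c\to y$, with $i$ having a reticulation child $r'$ elsewhere in the blob; this network contains $H(x,y)$ (since $i\neq b$), yet deleting $(i,r')$ suppresses $i$ and makes $b$ the common parent of $a$ and $c$, so that maximum subnetwork contains $A(x,y)$ --- neither $\Lambda(x,y)$ nor $H(x,y)$. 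Conversely, if you insist on the distance-based definition throughout, your conclusion is not established either: showing that the new parent $i'$ of $a$ is \emph{distinct} from $b$ does not show the subnetwork contains $H(x,y)$ under that definition; you would also need $d(i',b)\geq 2$, which can fail (let $i$ and $b$ be children of a common parent $i'$, so $d_N(i,b)=2$, and delete the edge from $i$ to its reticulation child: then $a$'s new parent $i'$ is adjacent to $b$). So under either consistent reading of the definition there is a configuration your argument does not cover. To be fair, the paper's own proof of the second statement is even thinner --- it merely asserts that since the parents of $x$ and $y$ survive and remain adjacent the subnetwork contains $H(x,y)$, which only establishes $K(x,y)$ --- so you have correctly located the weak point of this lemma; but your patch, as written, does not close it.
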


\begin{proof}
 Suppose for a contradiction that cutting~\review{$\langle x,y\rangle$} returns a maximum subnetwork~$N'$ of~$N$ containing either~$\Lambda(x,y), K(x,y),$ or~$K(y,x)$.
 If~$N'$ contains~$\Lambda(x,y)$, then the parent of~$x$ and the parent of~$y$ must share a common parent in~$N$.
 This implies that~$N$ contains~$A(x,y)$, a contradiction.
 If~$N'$ contains~$K(x,y)$, then the parent of~$y$ is a child of a reticulation in~$N$.
 This contradicts the tree-child property of~$N$.
 $N'$ cannot contain~$K(y,x)$ by Theorem~\ref{thm:ShapeIdentifiability}.
 
 To prove the second statement of the lemma, note that isolating~$H(x,y)$ returns a maximum subnetwork of~$N$ that contains~$\Lambda(x,y)$, and deleting any reticulation edge that is not incident to~$y$ returns a maximum subnetwork that contains~$H(x,y)$, since the parent of~$x$ and the parent of~$y$ is not suppressed and they are adjacent.
\end{proof}

\begin{figure}[h]
 \centering
 \includegraphics[scale=0.5]{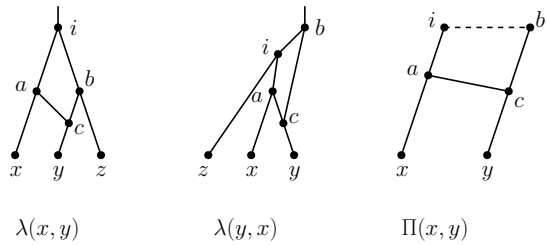}
 \caption{The three cases for~$H(x,y)$ in Lemma~\ref{lem:HCutShapes}. 
 Deleting edge~$(a,c)$ yields~$\lambda(x,y)$,~$\lambda(y,x)$, and~$\Pi(x,y)$ respectively.}
 \label{fig:HSubcases}
\end{figure}

We now show how we can reconstruct a blob containing the reticulation of a reticulated cherry, see Figure~\ref{fig:HReconstruction} for an example.

\begin{lemma}\label{lem:HReconstruct}
 Let~$N$ be a level-$k$ tree-child network, and suppose~$N$ contains~$H(x,y)$ for a leaf pair~$\{x,y\}$.
 Suppose in addition that the blob~$B$ containing the reticulation of~$H(x,y)$ is level-$k$.
 Then we can reconstruct~$B$ in the MLLSs of~$N$.
\end{lemma}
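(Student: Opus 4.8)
The plan is to recover $B$ from a single, identifiable MLLS---namely the one obtained by cutting the reticulated cherry $\langle x,y\rangle$---by reading off $B$ with the cherry removed and then re-attaching the cherry in the forced position. Since $y$ is a leaf, its parent $c$ is a lowest reticulation of $B$, and since $x$ is a leaf, its parent $a$ is a lowest tree node of $B$; hence $\langle x,y\rangle$ is a lowest reticulated cherry shape of $B$ in the sense of Lemma~\ref{lem:blobseesretcherryshape}. Using the blob tree $BT(N)$ (reconstructible by Theorem~\ref{thm:BloBTreeReconstruction}) together with the identification of level-$k$ blobs (Lemma~\ref{lem:blobgraphnew12}), I would first pin down the foundation node $A = desc_N(B)$ and collapse every pendant subnetwork strictly below $B$ to a single leaf, carrying out the same collapse (indexed by leaf-descendant sets) in $N$ and in each MLLS. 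This reduces the situation to a single-blob network $N_B$ with leaf set containing $x$ and $y$, whose maximum subnetworks are exactly the restrictions of the MLLSs of $N$ to $B$, each obtained by deleting one reticulation edge of $B$.

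Next I would locate the cut MLLS. By Lemma~\ref{lem:HCutShapes}, among all single-reticulation-edge deletions of $B$, precisely cutting $\langle x,y\rangle$ (deleting $(a,c)$) produces one of the shapes $\lambda(x,y)$, $\lambda(y,x)$, or $\Pi(x,y)$ on $\{x,y\}$, while every other deletion of an edge of $B$ leaves $\Lambda(x,y)$ or $H(x,y)$. Because $B$ is the lowest blob containing both $x$ and $y$, every shortest $xy$ up-down path lies inside $B\cup\{x,y\}$, so edge deletions in other level-$k$ blobs cannot alter the $\{x,y\}$-shape; an MLLS therefore has $\{x,y\}$-shape $\lambda$ or $\Pi$ if and only if its restriction to $B$ is the cut. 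I would then scan the MLLSs, select any one whose $\{x,y\}$-shape is $\lambda$ or $\Pi$, and, after collapsing, obtain exactly the maximum subnetwork $N_B^{\mathrm{cut}}$ of $N_B$ in which $(a,c)$ has been cut. All such MLLSs agree on the $B$-part, so the choice is immaterial.

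Finally I would re-attach the cherry. Since the deletion is valid (Lemma~\ref{lem:DeletingRetEdgeTCValid}), cutting suppresses only $a$ and $c$, so in $N_B^{\mathrm{cut}}$ the parent of $x$ is the former parent $i$ of $a$ and the parent of $y$ is the former second parent $b$ of $c$; both lie in the reduced blob, and $i\neq b$ precisely because the shape is $H(x,y)$ and not $A(x,y)$. I would reconstruct $B$ by subdividing the edge into $x$ with a new tree node $a$, subdividing the edge into $y$ with a new reticulation $c$, adding the directed edge $(a,c)$, and re-expanding the collapsed pendant subnetworks. Because the cut MLLS deletes only $(a,c)$ inside $B$ (leaving all $k-1$ other reticulations of $B$ fully intact), $N_B^{\mathrm{cut}}$ is exactly $B$ with the reticulated cherry removed, so this surgery recovers $B$ verbatim.

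The main obstacle is not the re-attachment, which is forced once $i$ and $b$ are read off, but the two bookkeeping facts that make the argument rigorous. First, the collapse of the pendant subnetworks below $B$ must be performed consistently across $N$ and all MLLSs; this is legitimate because the children of $A$ in $BT(N)$ are already-reconstructed foundation nodes that persist as pendant subnetworks in every MLLS (Theorem~\ref{thm:BTNodeLabelPreservationMLLS}), and since each MLLS is a subnetwork of $N$, collapsing the same leaf-sets yields a subnetwork relation $N_B^{\mathrm{cut}}\subseteq N_B$ differing only by the cherry. Second, I must justify that the $\{x,y\}$-shape of an MLLS detects whether $B$'s deleted edge is the cut edge $(a,c)$ independently of the deletions made in other blobs, which is where the locality of the $xy$ up-down path (and hence Lemma~\ref{lem:HCutShapes} applied via Theorem~\ref{thm:ShapeIdentifiability}) does the work. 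Once these two points are established, the reconstruction of $B$ is immediate.
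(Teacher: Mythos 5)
Your proposal is correct and takes essentially the same route as the paper: both identify the cut MLLS as the unique one whose $\{x,y\}$-shape is $\lambda(x,y)$, $\lambda(y,x)$ or $\Pi(x,y)$ (Lemma~\ref{lem:HCutShapes} together with Theorem~\ref{thm:ShapeIdentifiability}), and both reconstruct $B$ there by inserting nodes $a,c$ directly above $x,y$ and adding the edge $(a,c)$. The only presentational difference is that the paper then explicitly transplants the reconstructed $B$-part into every other MLLS (which is what the lemma's phrase ``in the MLLSs'' requires for its later use in Lemma~\ref{lem:ReconRedux}), whereas you leave that propagation implicit in your consistent, leaf-descendant-indexed collapse --- the same mechanism, packaged as collapsing rather than as the paper's $B$-part surgery.
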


\begin{proof}
 By Theorem~\ref{thm:ShapeIdentifiability} and Lemma~\ref{lem:HCutShapes},~$N$ contains~$H(x,y)$ if and only if all MLLSs of~$N$ contain either~$\Lambda(x,y)$, $H(x,y)$, $\lambda(x,y)$, $\lambda(y,x)$, or~$\Pi(x,y)$, and there exist distinct MLLSs of~$N$ that contain~$\Lambda(x,y), H(x,y)$, and one of~$\lambda(x,y), \lambda(y,x)$, or~$\Pi(x,y)$.
 Now find the MLLS~$N^{mlls}$ of~$N$ that contains one of~$\lambda(x,y), \lambda(y,x)$, or~$\Pi(x,y)$.
 Introduce nodes~$a,c$ directly above~$x,y$ respectively, and add an edge~$(a,c)$ to~$N^{mlls}$.
 This reconstructs the blob~$B$ in~$N^{mlls}$.

It remains to show how to reconstruct~$B$ in the other MLLSs. Hence, consider an arbitrary MLLS~$N^{mlls}_1$. Let~$A\subseteq X$ denote the set of leaf-descendants of the pure node~$p$ of~$B$ in~$N$.
Then $A$ is a node of~$BT(N)$.
 Let~$\Gamma(A)$ denote the set of all \review{children} of~$A$ in~$BT(N)$.
 Let~$p_i$ for~$i=1,\dots,|\Gamma(A)|$ denote the corresponding pure nodes in~$N$.
 In~$N$, delete the tree edge leading into~$p$, and also delete the two outgoing edges of~$p_i$ for~$i=1,\dots,|\Gamma(A)|$, but do not clean up.
 Call the component that contains the node~$p$ the \emph{$B$-part of~$N$}.
 By Theorem~\ref{thm:BTNodeLabelPreservationMLLS}, the MLLS $N^{mlls}_1$ contains \review{a pure node~$q$ with leaf-descendant set~$A$ and pure nodes~$q_i$ with leaf-descendant set equal to each set in~$\Gamma(A)$.}
 \review{In~$N^{mlls}_1$, delete the tree edge leading into~$q$, and also delete the two outgoing edges of~$q_i$, for~$i=1,\dots,|\Gamma(A)|$, but do not clean up.
 Call the component that contains the node~$q$ the \emph{$B$-part of~$N^{mlls}_1$.}}
 We can then reconstruct the blob~$B$ in~$N^{mlls}_1$ by replacing the~$B$-part of~$N^{mlls}_1$ by the~$B$-part of~$N^{mlls}$.
 \review{Since~$B$ is reconstructed correctly in~$N^{mlls}$, and since an edge deletion from a blob does not affect the network outside of the blob, it follows that this replacement correctly reconstructs the blob~$B$ in~$N^{mlls}_1$.} 
\end{proof}

\begin{figure}[h]
 \centering
 \includegraphics[width = \textwidth]{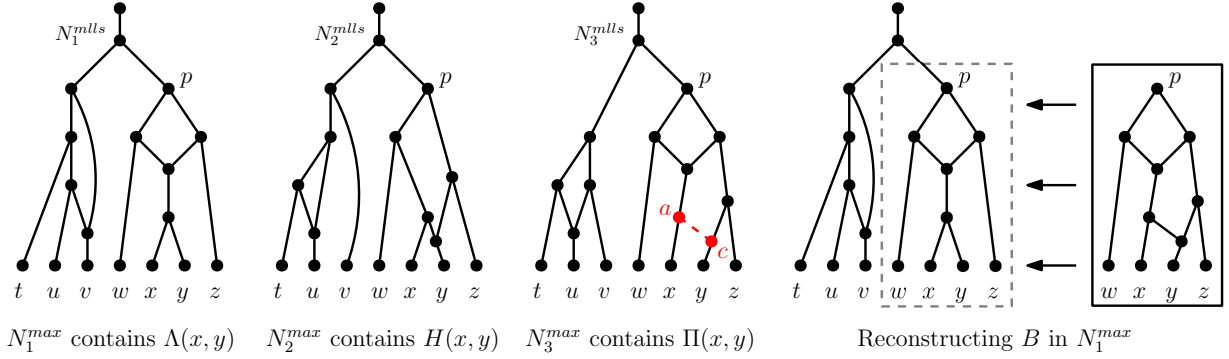}
 \caption{
 Three MLLSs~$N^{mlls}_1, N^{mlls}_2$, and~$N^{mlls}_3$ of a level-2 tree-child network containing exactly two level-2 blobs.
 The three MLLSs contain~$\Lambda(x,y), H(x,y)$, and~$\Pi(x,y)$ respectively.
 We reconstruct the blob~$B$ with pure node~$p$ in~$N^{mlls}_3$ initially, then reconstruct it in the other MLLSs.
 In~$N^{mlls}_3$, nodes~$a,c$ are inserted directly above~$x,y$ respectively, and an edge~$(a,c)$ is added (red dashed edge).
 To reconstruct~$B$ in~$N^{mlls}_1$ the pendant subnetwork rooted at~$p$ is replaced by the reconstructed pendant subnetwork.}
 \label{fig:HReconstruction}
\end{figure}

\begin{definition}
 Let~$N$ be a tree-child network.
 A cherry~$\Lambda(x,y)$ is \emph{reduced} by deleting the node~$y$ and cleaning up (same definition as in~\cite{bordewich2016determining}).
 A reticulated cherry~$K(x,y)$ is \emph{reduced} by isolating~$K(x,y)$ and reducing the resultant cherry~$\Lambda(x,y)$ (different definition to one in~\cite{bordewich2016determining}).
\end{definition}


 

 The following observation shows how we can obtain the MLLSs of a network obtained by reducing a reticulated cherry from the MLLSs of the original network. Note that a maximum subnetwork obtained by isolating a reticulated cherry in a tree-child network remains tree-child by Lemma~\ref{lem:RetEdgeDelNetTC} and that a network obtained by reducing a cherry in a tree-child network also remains tree-child~\cite{bordewich2016determining}.

\begin{lemma}\label{lem:ReconRedux}
 Let~$N$ be a level-$k$ tree-child network with a cherry or a reticulated cherry on \review{a leaf pair}~$\{x,y\}$, and let~$N'$ be the tree-child network obtained by reducing~$\{x,y\}$ from~$N$.
 \begin{itemize}
  \item If~$N$ contains~$H(x,y)$ and the blob~$B$ containing~$H(x,y)$ is of level-$k$, then, in each MLLS of~$N$, reconstruct~$B$ by Lemma~\ref{lem:HReconstruct} and subsequently reduce the reticulated cherry~$H(x,y)$.
  \item Otherwise, reduce~$\{x,y\}$ in all MLLSs of~$N$.
 \end{itemize}
 Let~$\cS$ denote the set of networks we obtain from either of the above two cases.
 Then~$\cS$ is precisely the set of all MLLSs of~$N'$.
\end{lemma}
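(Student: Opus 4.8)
The plan is to establish the set equality~$\cS=\cN^{mlls}(N')$ by showing that the case-dependent reduction of~$\{x,y\}$ commutes with the formation of MLLSs. The structural fact driving everything is that reducing~$\{x,y\}$ alters only the single blob~$B_{xy}$ containing~$\{x,y\}$, so every other blob of~$N$ survives unchanged in~$N'$, together with all of its reticulation edges; since~$N$ and~$N'$ are tree-child, Lemma~\ref{lem:DeletingRetEdgeTCValid} makes all these edges valid, so valid deletions in such a blob correspond bijectively between~$N$ and~$N'$. I would first record which blobs are level-$k$ in~$N'$: in the ``otherwise'' case~$B_{xy}$ is not level-$k$ (it is the parent of a cherry, a level-$1$ $A$-blob, or a blob of level below~$k$), so the level-$k$ blobs of~$N'$ are exactly those of~$N$; in the first case~$B_{xy}=B$ is level-$k$, and reducing~$H(x,y)$ suppresses its reticulation, so the level-$k$ blobs of~$N'$ are those of~$N$ other than~$B$. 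In both cases an MLLS places exactly one valid deletion in each such blob.

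For the ``otherwise'' case I would show the reduction is well defined on every MLLS and commutes with these deletions. By the claims of Theorem~\ref{thm:ShapeIdentifiability} --- that $\Lambda(x,y)$, $A(x,y)$, and $H(x,y)$ with a blob of level below~$k$ each persist in \emph{all} MLLSs --- the shape on~$\{x,y\}$ appears in every member of~$\cN^{mlls}(N)$, so reducing~$\{x,y\}$ is meaningful in each of them. As the reduction touches only~$B_{xy}$, which is disjoint from every level-$k$ blob, whereas an MLLS deletes edges only inside level-$k$ blobs, the two operations act on disjoint subgraphs and therefore commute: reducing~$\{x,y\}$ in an MLLS~$M$ of~$N$ leaves one deleted edge in each level-$k$ blob of~$N'$, giving an MLLS of~$N'$, and conversely every MLLS of~$N'$ arises by performing its deletions on the identical level-$k$ blobs of~$N$ and then reducing.

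For the first case the shape on~$\{x,y\}$ is \emph{not} preserved across MLLSs (Lemma~\ref{lem:HCutShapes}), which is precisely why one reconstructs~$B$ before reducing. Given an MLLS~$M$ of~$N$, Lemma~\ref{lem:HReconstruct} restores the full level-$k$ blob~$B$ inside~$M$ regardless of which reticulation edge of~$B$ was deleted to form~$M$, while leaving the deletions in all other level-$k$ blobs intact; this recovers~$H(x,y)$, so the ensuing reduction is well defined. Reducing~$H(x,y)$ then isolates and suppresses its reticulation, turning~$B$ into the level-$(k-1)$ blob~$B'$ and deleting the leaf~$y$. The outcome has~$B'$ reconstructed and one deleted edge in each of the remaining level-$k$ blobs, which is exactly an MLLS of~$N'$. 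Conversely, any MLLS~$M'$ of~$N'$ lifts to an MLLS~$M$ of~$N$ by copying its deletions on the shared level-$k$ blobs and deleting an arbitrary valid reticulation edge of~$B$, and then reconstruct-then-reduce sends~$M$ to~$M'$.

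Assembling the two inclusions in each case yields~$\cS=\cN^{mlls}(N')$. The step I expect to be the main obstacle is making the commutation in the first case fully rigorous: one must check that ``reconstruct~$B$, then reduce~$H(x,y)$'' yields, up to isomorphism, the same network as ``reduce~$H(x,y)$ in~$N$, then delete the remaining per-blob edges.'' This hinges on the locality of the operations --- that an edge deletion within one blob does not affect another (the principle underlying Lemma~\ref{lem:HReconstruct}), and that the reticulation removed by reducing~$H(x,y)$ is exactly the one whose incident edge the reconstruction had to restore --- so that the relative order of reconstruction, reduction, and the remaining deletions is immaterial.
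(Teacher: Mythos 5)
Your proposal is correct and takes essentially the same approach as the paper: both rest on the observation that reducing $\{x,y\}$ affects only the blob containing the pair while MLLS deletions act inside level-$k$ blobs, so the two operations commute (with reconstruction of $B$ accounting for the deleted edge in the $H(x,y)$ level-$k$ case). Your write-up simply makes explicit the details the paper leaves implicit — well-definedness of the reduction in every MLLS via Theorem~\ref{thm:ShapeIdentifiability}, the identification of the level-$k$ blobs of $N'$, and both inclusions of the set equality.
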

\begin{proof}
 \review{
 The MLLSs of~$N'$ are obtained by first reducing~$\{x,y\}$ from~$N$ and then finding the MLLSs of the resulting network.
 Since deleting edges from blobs has no effect on all other blobs in the network, we can in fact switch the order of reducing~$\{x,y\}$ and deleting edges from level-$k$ blobs.
 In particular, the MLLSs of~$N'$ can also be obtained by deleting exactly one reticulation edge from all level-$k$ blobs of~$N$ (that are not~$B$, in the case that~$N$ contains~$H(x,y)$ and the blob~$B$ containing~$H(x,y)$ is of level-$k$), and then subsequently reducing~$\{x,y\}$ in all the resulting subnetworks.
 The latter process of obtaining the MLLSs of~$N'$ is exactly how the set~$\cS$ is obtained, and therefore~$\cS$ is precisely the set of all MLLSs of~$N'$.}
\end{proof}

\begin{theorem}\label{thm:TCNetworksReconstructible}
 The class of binary level-$k$ tree-child networks is MLLS-reconstructible, for $k\geq 2$.
\end{theorem}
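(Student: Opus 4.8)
The plan is to prove the equivalent (and slightly stronger) statement that a level-$k$ tree-child network $N$ is recovered up to isomorphism by a deterministic procedure that reads only the set $\cN^{mlls}(N)$; MLLS-reconstructibility is then immediate, since two such networks $N,N'$ with $\cN^{mlls}(N)=\cN^{mlls}(N')$ feed the procedure identical input and hence produce isomorphic outputs. I would prove correctness by induction on the number of leaves $|X|$, with the one- and two-leaf networks as trivial base cases. The fact invoked at every step is Theorem~\ref{thm:ShapeIdentifiability}: the shape of each leaf pair is determined by the MLLS set, so the cherries and reticulated cherries of $N$ can all be read off $\cN^{mlls}(N)$.

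For the inductive step I would apply Lemma~\ref{lem:BordewichLemma} to fix a leaf pair $\{x,y\}$ forming a cherry or a reticulated cherry, and branch on its shape. If $\{x,y\}$ forms a cherry $\Lambda(x,y)$, or a reticulated cherry lying in a blob of level below $k$ (shape $A(x,y)$, or $H(x,y)$ with $lvl(B)<k$), the local structure is intact in every MLLS and I would reduce $\{x,y\}$ directly in each MLLS. If instead $\{x,y\}$ forms $H(x,y)$ inside a level-$k$ blob $B$, I would first reconstruct the whole of $B$ in every MLLS using Lemma~\ref{lem:HReconstruct} and only then reduce. In either branch, Lemma~\ref{lem:ReconRedux} guarantees that the networks so produced are exactly the MLLSs of the network $M$ obtained by reducing $\{x,y\}$ from $N$, and that this set is computed solely from the input. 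Since reducing deletes the single leaf $y$, the network $M$ has $|X|-1$ leaves, and the induction hypothesis returns $M$.

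It remains to invert the reduction. For a cherry this amounts to re-adding $y$ as a sibling of the leaf $x$, which is forced. For a reticulated cherry the reinsertion must also place the reticulation together with its second parent $g_y$, and the node to which $g_y$ attaches is precisely the information recorded by the full reconstruction of $B$ carried out before reducing; the reinsertion is therefore canonical and determined by the input alone. Chaining these deterministic steps reconstructs $N$, which completes the induction.

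I expect the real difficulty to lie in this inversion step, entangled with how the level behaves under reduction. When $k=2$ and $B$ is the only level-$k$ blob, reducing its reticulated cherry drops $M$ to level $1$, exactly the regime in which MLLS-reconstructibility can fail (girth at most $4$); the subtlety to argue is that $B$ is fixed once and for all by Lemma~\ref{lem:HReconstruct} before it degenerates, so that the level-$1$ remnant of $B$ is never reconstructed independently from its own, possibly ambiguous, MLLSs but is instead inherited from the already-determined blob $B$. Establishing that the recursion only ever needs to rebuild the portion of $N$ outside the fully determined level-$k$ blobs, and that the reinsertion is genuinely canonical, is the heart of the argument.
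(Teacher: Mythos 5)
Your outline follows the paper's proof almost exactly---induction on $|X|$, Lemma~\ref{lem:BordewichLemma} to pick a cherry or reticulated cherry, Theorem~\ref{thm:ShapeIdentifiability} to identify its shape from the MLLSs, Lemma~\ref{lem:HReconstruct} to rebuild a level-$k$ blob, then Lemma~\ref{lem:ReconRedux} and the induction hypothesis after reducing $\{x,y\}$---but it has a genuine gap exactly at the point you defer to ``the heart of the argument'', and your sketched resolution is not the one that works. When the blob $B$ containing the reticulation of $H(x,y)$ is the \emph{only} level-$k$ blob of $N$, the correct move is not to reduce and recurse at all: after applying Lemma~\ref{lem:HReconstruct}, every updated network in the input set is already isomorphic to $N$, and the paper simply stops there (``If $B$ is the only level-$k$ blob, we are done'', using Lemma~\ref{lem:blobgraphnew12} to recognize this case from the MLLSs). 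Your procedure instead reduces $\{x,y\}$ ``in either branch'', and in this case that step fails twice over. First, reducing $H(x,y)$ deletes the reticulation $p_y$ from $B$, so the reduced network $M$ has level strictly below $k$; the set $\cS$ you hand to the recursion then consists of copies of $M$ itself, which is \emph{not} $\cN^{mlls}(M)$ (the MLLSs of $M$ are proper subnetworks, obtained by deleting a reticulation edge from every blob of $M$'s own, lower, level), so Lemma~\ref{lem:ReconRedux} does not say what you claim: it implicitly requires the reduced network to still be level-$k$. Second, the induction hypothesis is a statement about level-$k$ tree-child networks and their MLLS sets, so it says nothing about the recursive call on input $\{M\}$; indeed for $k=2$ the network $M$ can be level-$1$ with girth at most~$4$, exactly the non-reconstructible regime you mention, and nothing rules out $\{M\}$ being the MLLS set of some unrelated level-$k$ network, in which case the recursion returns the wrong answer. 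Your ``inheritance'' idea cannot repair this, because the recursive call as you define it receives only $\cS$ and has no memory of the already-reconstructed blob $B$; the fix is termination \emph{before} reduction, not inheritance through it.

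Two lesser points. Detecting whether $B$ is the only level-$k$ blob (so the procedure knows when to stop) requires Lemma~\ref{lem:blobgraphnew12}, which you never invoke; alternatively, one can stop as soon as all updated networks in the set are isomorphic. And in your inversion step you attribute the placement of $g_y$ to ``the full reconstruction of $B$ carried out before reducing'', but in the branch where $N$ contains $H(x,y)$ with $lvl(B_H)<k$ no such reconstruction is performed; there the reinsertion is canonical because no MLLS ever deletes an edge of a lower-level blob, so $B_H$ appears intact in every network of the input, and that is where the attachment information actually comes from.
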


\begin{proof}
 We prove by induction on~$|X|$ that, for each level-$k$ tree-child network~$N$ on~$X$ with MLLS set~$\cM$, the network~$N$ is the unique level-$k$ tree-child network with MLLS set~$\cM$.
 The base case~$|X|=1$ is trivially true as when there is only one leaf, any network of level-2 or higher is no longer tree-child.
 So suppose~$|X|>1$ and that the claim is true for each level-$k$ tree-child network on at most~$|X|-1$ leaves.
 Let~$N$ be a level-$k$ tree-child network on~$X$ and let~$\cM$ be its MLLS set.
 \review{We will show that the network~$N$ can be reconstructed from~$\cM$, thus showing that~$N$ is the unique network with MLLS set~$\cM$.}
 
 By Lemma~\ref{lem:BordewichLemma}, there exists at least one leaf pair~$\{x,y\}$ that forms a cherry or a reticulated cherry in~$N$.
 
 
 If~$N$ contains~$H(x,y)$ and the blob~$B$ containing the reticulation of~$H(x,y)$ is of level-$k$, then \review{correctly} reconstruct~$B$ in each element of~$\cM$ as outlined in Lemma~\ref{lem:HReconstruct}. 
 \review{By Theorem~\ref{thm:ShapeIdentifiability}, there is no other way of reconstructing the blob~$B$.}
 \review{We update the elements of~$\cM$ by doing so.}
 If~$B$ is the only level-$k$ blob, we are done. Otherwise, we proceed as follows.
Note that we can do this, as we can identify all level-$k$ blobs by Lemma~\ref{lem:blobgraphnew12}.

 \review{At this point, all networks in~$\cM$ contain the same shape on~$\{x,y\}$. Either all networks contain~$\Lambda(x,y)$, all networks contain~$A(x,y)$, or all networks contain~$H(x,y)$.}
 
 Reduce~$\{x,y\}$ in each network of~$\cM$, and call this new set of networks~$\cS$.
  Each network in~$\cS$ is tree-child and contains $|X|-1$ leaves.
 By Lemma~\ref{lem:ReconRedux},~$\cS$ is the set of all MLLSs of~$N'$, the level-$k$ tree-child network obtained by reducing~$\{x,y\}$ in~$N$.
 By the induction hypothesis,~$N'$ is the unique level-$k$ tree-child networks with MLLS set~$\cS$.
 Reconstructing~$N'$ and undoing the reduction operation on~$\{x,y\}$ yields the tree-child network~$N$, which is therefore the unique level-$k$ tree-child network with MLLS set~$\cM$.
\end{proof}
 
 
 

Gambette et al. have shown that level-$1$ networks with girth at least~$5$ are level-reconstructible~\cite{gambette2017challenge}.
The next corollary follows from their results, Observation~\ref{obs:reconstructibility}, 
Theorem~\ref{thm:TCNetworksReconstructible}, and the following observation.

\begin{observation}
Let~$N$ and~$N'$ be two tree-child networks that are both either level at least~2 or girth at least~4. If the level of~$N$ and~$N'$ is different, then they do not have the same set of lower-level subnetworks.
\end{observation}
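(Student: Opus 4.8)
The plan is to distill from each set of lower-level subnetworks a single numerical invariant that already determines the level, namely the \emph{maximum level} attained by any network in that set. Writing $k=lvl(N)$ and $k'=lvl(N')$ and assuming without loss of generality that $k<k'$, I would first dispose of the degenerate case $k=0$, where $N$ is a tree: then $\cN^{-1}(N)=\emptyset$, whereas $\cN^{k'-1}(N')$ is nonempty because $N'$ always displays a tree (delete one reticulation edge per reticulation), so the two sets differ at once. For $k\geq 1$ (hence $k'\geq 2$) I would instead show that
$\max\{lvl(M):M\in\cN^{k-1}(N)\}=k-1$ and, by the same argument applied to $N'$, that $\max\{lvl(M):M\in\cN^{k'-1}(N')\}=k'-1$. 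Since $k-1\neq k'-1$, the two sets of lower-level subnetworks then attain different maximum levels and so cannot coincide.

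Everything therefore reduces to the following claim: \emph{a tree-child network $M$ of level $\ell\geq 1$ has a subnetwork of level exactly $\ell-1$.} The inequality $\leq\ell-1$ is immediate since $\cN^{\ell-1}(M)$ contains only networks of level at most $\ell-1$; the content is producing a subnetwork of level exactly $\ell-1$. To construct one, I would pick a level-$\ell$ blob $B$ of $M$ and, using Lemma~\ref{lem:blobseesretcherryshape}, fix a lowest reticulated cherry shape $\langle x,y\rangle$ of $B$, with $p_x$ a lowest tree node of $B$, reticulation $p_y$, and second parent $g_y$ of $p_y$. Cutting $\langle x,y\rangle$ deletes $(p_x,p_y)$ and suppresses $p_x,p_y$; this is a valid deletion by Lemma~\ref{lem:DeletingRetEdgeTCValid}, so it removes exactly the reticulation $p_y$, leaves every other blob of $M$ untouched, and leaves $B$ with exactly $\ell-1$ reticulations (and the result is still tree-child by Lemma~\ref{lem:RetEdgeDelNetTC}). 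Deleting one such edge from every level-$\ell$ blob yields a subnetwork of level at most $\ell-1$; to reach level exactly $\ell-1$ it suffices that in at least one level-$\ell$ blob the deletion leaves the $\ell-1$ remaining reticulations inside a single biconnected component, i.e.\ that $B$ does not fall apart.

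The crux, and the step I expect to be the main obstacle, is precisely this biconnectivity claim: after cutting the lowest reticulated cherry, the modified blob should remain a single blob of level $\ell-1$ rather than splitting into several lower-level blobs. I would argue this from the fact that the deleted cherry sits at the very bottom of $B$. In the underlying undirected graph of $B$ the nodes $p_x$ and $p_y$ both have degree two, with neighbours $w$ (the parent of $p_x$ in $B$) and $g_y$, so that $w\,p_x\,p_y\,g_y$ is an ear whose internal vertices $p_x,p_y$ have no descendants inside $B$; hence removing them cannot disconnect anything lying above them, and the remaining graph is connected. Since $B$ is $2$-connected and, as $\ell\geq 2$, is not a single cycle, the two parents $w$ and $g_y$ of the ear are joined by two internally disjoint ancestral paths from the top node of $B$ (these are exactly the paths witnessing that $p_y$ is a reticulation of the blob), which survive the removal, and a short case analysis on the degrees of $w$ and $g_y$ shows no cut-vertex is created, so the remainder is again $2$-connected and thus a single level-$(\ell-1)$ blob. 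Granting this, the maximum-level computation goes through for both $N$ and $N'$, and together with the $k=0$ case it yields the observation. I would finally remark that the hypothesis that $N$ and $N'$ are level at least $2$ or of girth at least $4$ is not actually needed for this argument, so the statement in fact holds for every pair of tree-child networks of distinct level.
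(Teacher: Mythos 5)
Your high-level reduction is sound: the $k=0$ case is fine, and reducing the observation to the claim that a level-$\ell$ tree-child network has a subnetwork of level \emph{exactly} $\ell-1$ is a reasonable strategy (the paper records this statement as an unproved observation, so this claim is where all the content lies). The genuine gap is at the step you yourself flag as the crux: it is \emph{false} that cutting a lowest reticulated cherry shape of a level-$\ell$ blob leaves a single blob of level $\ell-1$. Counterexample: take the binary tree-child network with a root edge into $t$ and edges $t\to u_1,u_2$; $u_1\to w_1,r_1$; $u_2\to r_1,f_1$; $r_1\to v$; $v\to v_1,v_2$; $v_1\to z_1,r_2$; $v_2\to r_2,f_2$; $w_1\to f_4,r_3$; $z_1\to r_3,f_5$; $r_2\to f_3$; $r_3\to f_6$, where $f_1,\dots,f_6$ are leaves. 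This is tree-child, and the nodes $t,u_1,u_2,r_1,v,v_1,v_2,r_2,w_1,z_1,r_3$ form a \emph{single} biconnected component: the two cycles $\{t,u_1,u_2,r_1\}$ and $\{v,v_1,v_2,r_2\}$ sit in series, and are tied together by the third reticulation $r_3$, whose parents $w_1$ and $z_1$ hang off the upper and lower cycle respectively. So this is one level-$3$ blob $B$. Now $w_1$ is a lowest tree node of $B$, hence $\langle f_4,f_6\rangle$ is a lowest reticulated cherry shape with $p_x=w_1$, $p_y=r_3$, $g_y=z_1$. Cutting it (deleting $(w_1,r_3)$ and suppressing $w_1,r_3$) destroys the tie: in the result, $r_1$ and $v$ are cut nodes, and the biconnected components are $\{t,u_1,u_2,r_1\}$ and $\{v,v_1,v_2,r_2\}$, each of level $1$. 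The blob falls apart and the subnetwork has level $1$, not $\ell-1=2$; isolating $\langle f_4,f_6\rangle$ splits it in exactly the same way.

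The example also pinpoints why your $2$-connectivity argument cannot be repaired locally: after the cut, the surviving witness paths from the top of $B$ reach $w=u_1$ and $g_y=z_1$ but meet only at $t$, and the path to $z_1$ runs through $r_1$ and $v$; so $w$ and $g_y$ remain connected but not $2$-connected, and the new cut vertices arise at $r_1$ and $v$, far from $w$ and $g_y$, where no case analysis on the degrees of $w$ and $g_y$ can detect them. The existential claim you actually need (\emph{some} reticulation edge of a level-$\ell$ blob can be deleted keeping the other $\ell-1$ reticulations in one blob) does appear to be true --- in the example one must delete an edge of $r_1$ or $r_2$ rather than of $r_3$, and then the other two reticulations stay together --- but proving it requires a criterion for selecting the right reticulation; your proposal supplies none, and this is a genuine lemma, not a local degree count. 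Note your argument is complete for levels up to $2$ (after one deletion a single surviving reticulation always lies in a cycle), so the gap concerns $k'\geq 3$, and your closing remark that the level/girth hypothesis is superfluous also rests on the broken claim. Finally, one can bypass the missing lemma using the paper's own machinery: since tree-child networks are valid, the MLLSs are exactly the lower-level subnetworks with the maximum number of edges, so $\cN^{k-1}(N)=\cN^{k'-1}(N')$ forces $\cN^{mlls}(N)=\cN^{mlls}(N')$; MLLSs of a level-$1$ network are trees while MLLSs of a network of level at least $2$ are not, and for two networks of level at least $2$ the reconstruction underlying Theorem~\ref{thm:TCNetworksReconstructible} depends only on the MLLS set, yielding $N\cong N'$ and contradicting $lvl(N)\neq lvl(N')$.
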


\begin{corollary}
The class of tree-child networks, excluding trees and level-$1$ networks with girth at most~4, is MLLS-reconstructible, level-reconstructible and subnetwork-reconstructible.
\end{corollary}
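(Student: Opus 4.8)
The plan is to prove MLLS-reconstructibility of the class $\cC$ directly; level-reconstructibility and subnetwork-reconstructibility then follow immediately from Observation~\ref{obs:reconstructibility}. So let $N,N'\in\cC$ satisfy $\cN^{mlls}(N)=\cN^{mlls}(N')$, write $k=lvl(N)$ and $k'=lvl(N')$, and aim to show that $N\cong N'$.

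The first and main step is to show that $N$ and $N'$ have the same level, i.e.\ $k=k'$. Since tree-child networks are valid (Lemma~\ref{lem:DeletingRetEdgeTCValid}), Lemma~\ref{lem:ValidLLS=MLLS} expresses $\cN^{k-1}(N)$ as $\cN^{mlls}(N)\cup\bigcup_{M\in\cN^{mlls}(N)}\cN(M)$, and likewise for $N'$. This recipe is the same function of the MLLS set regardless of the network's level, so from $\cN^{mlls}(N)=\cN^{mlls}(N')$ we obtain $\cN^{k-1}(N)=\cN^{k'-1}(N')$; that is, $N$ and $N'$ have the same set of lower-level subnetworks. Now every network of $\cC$ is either of level at least~$2$ or (being a level-$1$ network of girth at least~$5$) of girth at least~$4$, so both $N$ and $N'$ meet the hypothesis of the observation preceding this corollary. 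The contrapositive of that observation then forces $k=k'$.

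With the levels equal, I would finish by a short case split. If $k=k'\geq 2$, then $N$ and $N'$ are both binary level-$k$ tree-child networks with equal MLLS sets, so $N\cong N'$ by Theorem~\ref{thm:TCNetworksReconstructible}. If $k=k'=1$, then $N$ and $N'$ are level-$1$ networks of girth at least~$5$, and the equality $\cN^{k-1}(N)=\cN^{k'-1}(N')$ reads $\cN^{0}(N)=\cN^{0}(N')$; the level-reconstructibility of level-$1$ networks of girth at least~$5$ established by Gambette et al.~\cite{gambette2017challenge} then yields $N\cong N'$. In either case $N\cong N'$, so $\cC$ is MLLS-reconstructible, and the remaining two conclusions follow from Observation~\ref{obs:reconstructibility}.

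The technical heart is the first step. MLLS-reconstructibility does not presuppose that the two networks share a level, so before any single-level result (Theorem~\ref{thm:TCNetworksReconstructible} or the result of Gambette et al.) can be applied, one must rule out the possibility that networks of different levels inside $\cC$ share an MLLS set. Converting the given MLLS equality into an equality of lower-level subnetwork sets via Lemma~\ref{lem:ValidLLS=MLLS}, so that the level-separating observation becomes applicable, is the crux; the concluding case analysis is then routine.
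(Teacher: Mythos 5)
Your proposal is correct and follows essentially the same route the paper intends: the corollary is derived from Theorem~\ref{thm:TCNetworksReconstructible} (for equal level $\geq 2$), Gambette et al.'s result (for equal level $1$), the observation separating levels via lower-level subnetworks, and Observation~\ref{obs:reconstructibility}. Your only addition is to make explicit the bridge the paper leaves implicit --- invoking Lemma~\ref{lem:DeletingRetEdgeTCValid} and Lemma~\ref{lem:ValidLLS=MLLS} to convert equality of MLLS sets into equality of lower-level subnetwork sets so that the level-separating observation applies --- which is exactly the intended use of those lemmas.
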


\section{Reconstruction Algorithm for Tree-Child Networks}\label{sec:Algorithm}
In this section we present an algorithm in the form of pseudo-code for reconstructing tree-child networks from their MLLSs.
As shown in Section~\ref{sec:TCReconstruction}, we need only to reconstruct the~$H$ shapes contained in level-$k$ blobs.
Algorithm~\ref{alg:downupTCReconstruction} systematically rebuilds every level-$k$ blob from the bottom-up, reducing common pendant subnetworks to leaves on the way.
We give an example of Algorithm~\ref{alg:downupTCReconstruction} in Figure~\ref{fig:Algorithm}. To keep the description of the algorithm concise, it assumes that the input~$\mathcal{T}$ consists of the set of MLLSs of some level-$k$ tree-child network, with~$k\geq 2$. Nevertheless, the algorithm can in principle also be used to decide whether such a network exists or not. If the algorithm returns a network~$N$, then we can check whether $\mathcal{T} = \cN^{mlls}(N)$. If this is not the case, or the algorithm fails to output a network, then such a network does not exist (see Theorem~\ref{thm:alg}). Checking whether $\mathcal{T} = \cN^{mlls}(N)$ can be done in $O(|\mathcal{T}|^2|X|^2)$ time, because checking whether two tree-child networks are isomorphic can be done in~$O(|X|^2)$ time~\cite{cardona2009comparison}.


Moreover, the algorithm can even be applied to an arbitrary set of level-$k-1$ tree-child networks as input. If a network displaying the input networks exists the algorithm may find it, but is not guaranteed to do so (see Theorem~\ref{thm:MinNumMLLS}.)

Before presenting the algorithm, we first go over a few key ideas required to prove the correctness and find the time complexity of the algorithm.
We reiterate the idea of collapsing a pendant subnetwork from a network (presented in Subsection~\ref{subsec:MinNumMLLSBT}, and additionally define what it means to collapse a common pendant subnetwork from a set of networks.
Let~$N$ be a network and let $N_A$ be a pendant subnetwork of~$N$ rooted by a node with leaf-descendant set~$A$.
Collapsing~$N_A$ from~$N$ means that we replace~$N_A$ by a leaf~$A$. 
Let~$N\setminus N_A$ denote the network obtained by collapsing~$N_A$ from~$N$.
Let~$\cM$ be a set of networks containing a common pendant subnetwork~$N_A$.
\emph{Collapsing}~$N_A$ from~$\cM$ means that we collapse~$N_A$ from every network in~$\cM$.
Let~$\cM\setminus N_A$ denote the set of networks obtained by collapsing~$N_A$ from~$\cM$.

\begin{lemma}\label{lem:PendantSubnetwork}
 Let~$N$ be a level-$k$ tree-child network where~$k\geq2$.
 If there exists a common pendant subnetwork~$N_A$ for the MLLSs of~$N$, then~$N_A$ is a pendant subnetwork of~$N$.
\end{lemma}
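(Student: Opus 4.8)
The plan is to first use the blob-tree machinery to locate a candidate pendant subnetwork of $N$ with leaf set $A$, and then to show that this candidate must equal $N_A$, ruling out the only alternative by a blob-tree reconstruction argument. First I would observe that, since every MLLS $N^{mlls}$ of $N$ is tree-child and $N_A$ is a pendant subnetwork of it rooted at a node with leaf-descendant set $A$, the root of $N_A$ is a pure node and hence $A$ is a blob node of $BT(N^{mlls})$. By the uniqueness of blob-node labels in tree-child networks (Corollary~\ref{cor:BTUniqueLabel}), the pendant subnetwork with leaf set $A$ in each MLLS is unique, so it is exactly $N_A$ in each. Applying Theorem~\ref{thm:BTNodeLabelPreservationMLLS}, $A$ is therefore a blob node of $BT(N)$, so $N$ contains a pure node $p$ with $desc_N(p)=A$ together with its pendant subnetwork $P$ rooted at $p$; this $P$ is tree-child of level at most $k$. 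The goal then reduces to proving $P\cong N_A$.

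The key step is to relate the MLLSs of $N$ to $P$. Since deleting a reticulation edge from a blob does not affect any other blob, the pendant subnetwork with leaf set $A$ in any MLLS of $N$ is obtained from $P$ by deleting exactly one valid reticulation edge from each level-$k$ blob of $N$ that lies inside $P$. If $lvl(P)<k$, then $P$ contains no level-$k$ blob of $N$, so this pendant subnetwork is simply $P$ for every MLLS; hence $N_A=P$ and we are done. If instead $lvl(P)=k$, then the level-$k$ blobs of $N$ inside $P$ are exactly the level-$k$ blobs of $P$, and so the pendant subnetworks with leaf set $A$, taken over all MLLSs of $N$, are precisely the MLLSs of $P$. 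Since $N_A$ is common to all MLLSs of $N$, this forces every MLLS of $P$ to be isomorphic to $N_A$.

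I would then rule out the case $lvl(P)=k$. Here $P$ is a level-$k$ tree-child network with $k\geq2$, so Lemma~\ref{lem:MinNumberMLLStoReconstructBT} applies: letting $P_1$ and $P_2$ be the MLLSs of $P$ obtained by cutting and by isolating a lowest reticulated cherry shape in every level-$k$ blob, the remark following that lemma gives $\cF(P_1)\cap\cF(P_2)=\cF(P)$. By Observation~\ref{obs:DiffBlobTreeAfterDeletion} and the discussion preceding it, isolating produces a new pure node whose leaf-descendant set is not a foundation node of $P$, so there exists some $L\in\cF(P_2)\setminus\cF(P)$. By the intersection identity, $L\notin\cF(P_1)$, whence $\cF(P_1)\neq\cF(P_2)$, so $BT(P_1)\neq BT(P_2)$ and $P_1\not\cong P_2$. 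This contradicts the conclusion that all MLLSs of $P$ are isomorphic to the single network $N_A$. Therefore $lvl(P)<k$, and so $N_A=P$ is a pendant subnetwork of $N$.

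I expect the main obstacle to be making precise the correspondence between the pendant subnetwork with leaf set $A$ in an MLLS of $N$ and the MLLSs of $P$: one must check that deletions from blobs outside $P$ do not interfere, and that the $A$-labelled pure node is preserved under the relevant deletions (possibly shifting to the appropriate node when the top blob of $P$ is itself modified), so that this pendant subnetwork is well-defined and unique. The uniqueness of node labels for tree-child networks (Corollary~\ref{cor:BTUniqueLabel}) and the locality of reticulation-edge deletions within a blob are exactly what make this correspondence clean.
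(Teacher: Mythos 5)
Your proof is correct, but it follows a genuinely different route from the paper's. The paper's proof is a three-line direct argument: since $N_A$ is a common pendant subnetwork, $BT(N_A)$ is a common pendant subtree of the blob trees of all MLLSs; by Theorem~\ref{thm:BTNodeLabelPreservationMLLS} it is also a pendant subtree of $BT(N)$, and Lemma~\ref{lem:blobgraphnew12} (a blob of $N$ has level $<k$ if and only if its set of children is the same in $BT(N)$ and in every MLLS blob tree) then applies to every blob node of $BT(N_A)$ at once, showing every blob inside $N_A$ has level $<k$; hence no reticulation edge was deleted inside $N_A$, so it sits intact in $N$. You never invoke Lemma~\ref{lem:blobgraphnew12}. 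Instead you localize: you identify the pendant subnetwork $P$ of $N$ with leaf set $A$, argue that the $A$-pendant parts of the MLLSs of $N$ are exactly the MLLSs of $P$ when $lvl(P)=k$, and then rule out $lvl(P)=k$ by exhibiting two non-isomorphic MLLSs of $P$ (cutting versus isolating, via the remark after Lemma~\ref{lem:MinNumberMLLStoReconstructBT} together with the discussion preceding Observation~\ref{obs:DiffBlobTreeAfterDeletion}). Both arguments ultimately rest on the same phenomenon --- isolating a lowest reticulated cherry shape of a level-$\geq 2$ blob creates a pure node with a leaf-descendant set that is new, which is also what drives the converse direction of Lemma~\ref{lem:blobgraphnew12} --- but your decomposition is different: a proof by contradiction through the minimum-MLLS machinery rather than a one-shot application of the blob-identification lemma. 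What your route buys: it avoids the children-set characterization entirely, and it yields as a byproduct the statement that a level-$k$ tree-child network with $k\geq 2$ can never have all of its MLLSs isomorphic. What it costs: the locality claims you flag at the end (uniqueness of the $A$-pendant part, independence of edge deletions across blobs, preservation of pure nodes and descendant sets under deletions in other blobs) must all be verified, whereas the paper's proof absorbs them into already-proven lemmas; they are true, and of the same kind the paper itself uses implicitly (e.g.\ in Lemmas~\ref{lem:HReconstruct} and~\ref{lem:ReconRedux}), so this is a matter of length rather than of correctness.
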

\begin{proof}
 Consider the blob tree~$BT(N_A)$.
 Since~$N_A$ is a common pendant subnetwork of all MLLSs of~$N$, the blob tree~$BT(N_A)$ is a common pendant subtree of all blob trees of the MLLSs of~$N$.
 By Theorem~\ref{thm:BTNodeLabelPreservationMLLS},~$BT(N)$ must contain~$BT(N_A)$.
 By Lemma~\ref{lem:blobgraphnew12},~$N_A$ must be a level-$k'<k$ network.
 \review{This implies that no edge was deleted from~$N_A$ in obtaining the MLLSs of~$N$.}
 Therefore~$N_A$ is a pendant subnetwork of~$N$.
\end{proof}

The following two observations follow directly from Lemma~\ref{lem:blobgraphnew12}. 

\begin{observation}\label{obs:NoPendantSubnetworkIFFAllLowestLvlK}
 Let~$N$ be a level-$k$ tree-child network where~$k\geq2$.
 There exists no common pendant subnetwork for the MLLSs of~$N$ if and only if all lowest blobs in~$N$ are of level-$k$.
\end{observation}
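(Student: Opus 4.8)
The plan is to prove the two directions separately, using Lemma~\ref{lem:blobgraphnew12}, which tells us that a blob of level $<k$ keeps the same set of children in the blob tree of every MLLS (and in particular is never modified when forming an MLLS), together with Lemma~\ref{lem:PendantSubnetwork}. The underlying correspondence I would use throughout is that a blob $B$ of $N$ is a \emph{lowest} blob precisely when the blob node labelled $desc_N(B)$ is a leaf of $BT(N)$, and that in this case the pendant subnetwork $N_{desc_N(B)}$ is exactly $B$ together with the leaves hanging directly beneath it.

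For the direction ``not all lowest blobs are level-$k$ $\Rightarrow$ a common pendant subnetwork exists'', I would take a lowest blob $B$ with $lvl(B)=k'<k$ and put $A=desc_N(B)$. Since $B$ is level $<k$, no reticulation edge of $B$ is deleted in forming any MLLS; and since $B$ is lowest, every level-$k$ blob of $N$ lies above $B$ or is incomparable to $B$, so none of the edge deletions used to build an MLLS touches the pendant subnetwork rooted at the pure node of $B$. Hence $N_A$ is unchanged across all MLLSs, i.e.\ it is a common pendant subnetwork. Note that $B$ cannot be the root blob, for then $N=N_A$ would itself be a single blob of level $k'<k$, contradicting $lvl(N)=k$; so $N_A$ is a proper, non-trivial pendant subnetwork.

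For the converse I would argue by contradiction: assume every lowest blob of $N$ is level-$k$ but the MLLSs share a common pendant subnetwork $N_A$. By Lemma~\ref{lem:PendantSubnetwork}, $N_A$ is a pendant subnetwork of $N$, and (as its proof establishes via Lemma~\ref{lem:blobgraphnew12}) $N_A$ has level $k'<k$. Because $N_A$ is downward closed in $N$, any blob that is lowest inside $N_A$ is also a lowest blob of $N$; choosing such a blob --- it exists since $N_A$ contains at least one blob --- it has level $<k$, contradicting the standing assumption that all lowest blobs are level-$k$.

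The hard part, I expect, is not the core argument but pinning down the degenerate cases so that the equivalence is literally true. In particular one must take ``common pendant subnetwork'' to mean a non-trivial one, containing at least one blob so that $A$ is genuinely a foundation node, since otherwise a bare leaf shared by all MLLSs would vacuously be a common pendant subnetwork and break the converse. With that convention fixed, the only remaining work is to confirm that deletions inside level-$k$ blobs elsewhere in $N$ leave $N_A$ untouched, which is exactly the invariance content supplied by Lemma~\ref{lem:blobgraphnew12}.
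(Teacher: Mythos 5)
Your proof is correct and takes essentially the route the paper intends: the paper gives no written proof for this observation, remarking only that it ``follows directly from Lemma~\ref{lem:blobgraphnew12}'', and your argument --- combining Lemma~\ref{lem:PendantSubnetwork} (whose proof rests on Lemma~\ref{lem:blobgraphnew12}) with the fact that forming an MLLS deletes edges only inside level-$k$ blobs and hence leaves a lowest blob of level below~$k$, together with everything pendant below it, untouched --- is exactly that derivation made explicit. Your caveat that ``common pendant subnetwork'' must be read as non-trivial (containing at least one blob, so that a bare common leaf does not vacuously break the equivalence) is a fair and genuinely necessary reading of the paper's convention.
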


\begin{observation}\label{obs:PendantSubnetworkBT}
 Let~$N$ be a level-$k$ tree-child network where~$k\geq2$.
 There exists a common pendant subnetwork~$N_A$ for the MLLSs of~$N$ if and only if there exists a common pendant subtree rooted at~$A$ for the blob trees of the MLLSs of~$N$.
\end{observation}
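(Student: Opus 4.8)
The plan is to prove the two implications separately, reducing each to the correspondence between pendant subnetworks and pendant subtrees of blob trees (Lemma~\ref{lem:BTSetminusPendantSubtree}) together with Lemma~\ref{lem:blobgraphnew12}, exactly as the surrounding text foreshadows. Throughout I write $N_A$ for a pendant subnetwork rooted at a node whose leaf-descendant set is $A$, and I identify blob nodes with their (unique, by Corollary~\ref{cor:BTUniqueLabel}) labels.

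For the easy forward direction, I would start from a common pendant subnetwork $N_A$ of the MLLSs. For each $N^{mlls}$, the subnetwork $N_A$ is a pendant subnetwork of $N^{mlls}$, so by Lemma~\ref{lem:BTSetminusPendantSubtree} and the uniqueness of blob-tree labels the pendant subtree of $BT(N^{mlls})$ rooted at $A$ is precisely $BT(N_A)$. Since $N_A$, and hence $BT(N_A)$, is the same for every MLLS, this immediately produces a common pendant subtree rooted at $A$.

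The substantive reverse direction is where I would spend the effort. Assume the blob trees of the MLLSs share a common pendant subtree $T$ rooted at $A$. Every node of $T$ occurs in every $BT(N^{mlls})$, so by Theorem~\ref{thm:BloBTreeReconstruction} each is a foundation node of $N$; in particular $A$ is a foundation node, giving a blob $B$ of $N$ with $desc_N(B)=A$, and I take $N_A$ to be the pendant subnetwork of $N$ rooted at the pure node of $B$. The key claim is that \emph{every blob of $N_A$ has level $k'<k$}. Suppose not, and let $C$ with $desc_N(C)\subseteq A$ be a level-$k$ blob. By Observation~\ref{obs:DiffBlobTreeAfterDeletion} I can isolate a lowest reticulated cherry of $C$ to obtain an MLLS $M$ whose blob tree gains a new pure node $q$ with $desc_M(q)\subsetneq desc_N(C)\subseteq A$; as recorded just before that observation, for a blob of level at least $2$ this leaf-descendant set coincides with that of no node of $N$, so $q$ is not a foundation node. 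Hence by Theorem~\ref{thm:BloBTreeReconstruction} there is another MLLS $M'$ with $q\notin BT(M')$. Since $q$ lies strictly below $A$ in $BT(M)$ but is absent from $BT(M')$, the pendant subtrees rooted at $A$ in $BT(M)$ and $BT(M')$ differ, contradicting the commonality of $T$. This establishes the claim, and I can then invoke Lemma~\ref{lem:blobgraphnew12}: blobs of level $<k$ have none of their reticulation edges deleted when forming any MLLS, and the cut-edge feeding into $B$ is untouched by deletions in other blobs, so $N_A$ survives unchanged as a pendant subnetwork of every MLLS, yielding the desired common pendant subnetwork.

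The main obstacle is precisely this reverse claim. A blob tree records only the coarse blob-level structure, so one must rule out the possibility that a level-$k$ blob buried inside $N_A$ is resolved differently in different MLLSs while leaving the pendant part of the blob tree invariant. It is the uniqueness of the newly created leaf-descendant set (the \emph{moreover} in Observation~\ref{obs:DiffBlobTreeAfterDeletion}), combined with the characterisation of foundation nodes as the shared nodes of all MLLS blob trees, that forecloses this; everything else is the routine bookkeeping of Lemma~\ref{lem:blobgraphnew12}.
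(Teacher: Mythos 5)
Your proof is correct and is essentially the paper's intended argument: the paper states this observation without an explicit proof, as following directly from Lemma~\ref{lem:blobgraphnew12}, and your write-up simply unpacks that derivation --- the forward direction is the pendant-subnetwork/pendant-subtree correspondence behind Lemma~\ref{lem:BTSetminusPendantSubtree}, and your key reverse claim (no level-$k$ blob can lie inside $N_A$) re-derives the contrapositive half of Lemma~\ref{lem:blobgraphnew12} via Observation~\ref{obs:DiffBlobTreeAfterDeletion} and the foundation-node characterisation of Theorem~\ref{thm:BTNodeLabelPreservationMLLS}, after which the MLLS definition (level-$<k$ blobs are never cut) finishes the job. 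Nothing is missing; you have written out explicitly what the paper leaves implicit.
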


 


\begin{algorithm}[H]
 \KwData{A set $\mathcal{T} = \cN^{mlls}(N)$ for some level-$k$ tree-child network~$N$, where~$k\geq2$}
 \KwResult{The network $N$}
 Update~$\cal{T}$ by collapsing maximal common pendant subnetworks from every network in~$\cal{T}$\;
 Find the blob tree for each network in~$\cal{T}$\;
 Find
 a minimal set~$A$ that is a node of the blob tree of each network in~$\cal{T}$\;
 Find a leaf pair~$\{x,y\}$ where~$x,y\in A$ such that distinct networks~$N_1,N_2,N_3$ of~$\cal{T}$ contain~$\Lambda(x,y), H(x,y)$, and one of~$\lambda(x,y), \lambda(y,x)$, or~$\Pi(x,y)$ respectively\;

 
 
 Update~$N_3$ by adding nodes~$a,c$ directly above~$x,y$ respectively and an edge~$(a,c)$\;
 Let~$N_A$ denote the pendant subnetwork rooted at the top node of this blob in~$N_3$\;
 \For{$N^{mlls}\in\mathcal{T}$}{
 Find the pure node~$p$ with leaf-descendant set~$A$\;
 Replace the pendant subnetwork rooted at~$p$ by~$N_A$\;
 Collapse~$N_A$ from $N^{mlls}$\;
 }
  \uIf{$\cal{T}$ contains a single element~$T$}{
  $N' := T$\;
 }
\Else{$N' := \textsc{TCMLLS-Reconstruction}(\cal{T})$\;
}
 
 Construct~$N$ from~$N'$ by appending the maximal common pendant subnetworks we have collapsed\;
 \Return~$N$

 \caption{Algorithm {\textsc{TCMLLS-Reconstruction}($\cal{T}$)}}
 \label{alg:downupTCReconstruction} 
\end{algorithm}

\begin{figure}[h!]
 \centering
 \includegraphics[scale = 0.45]{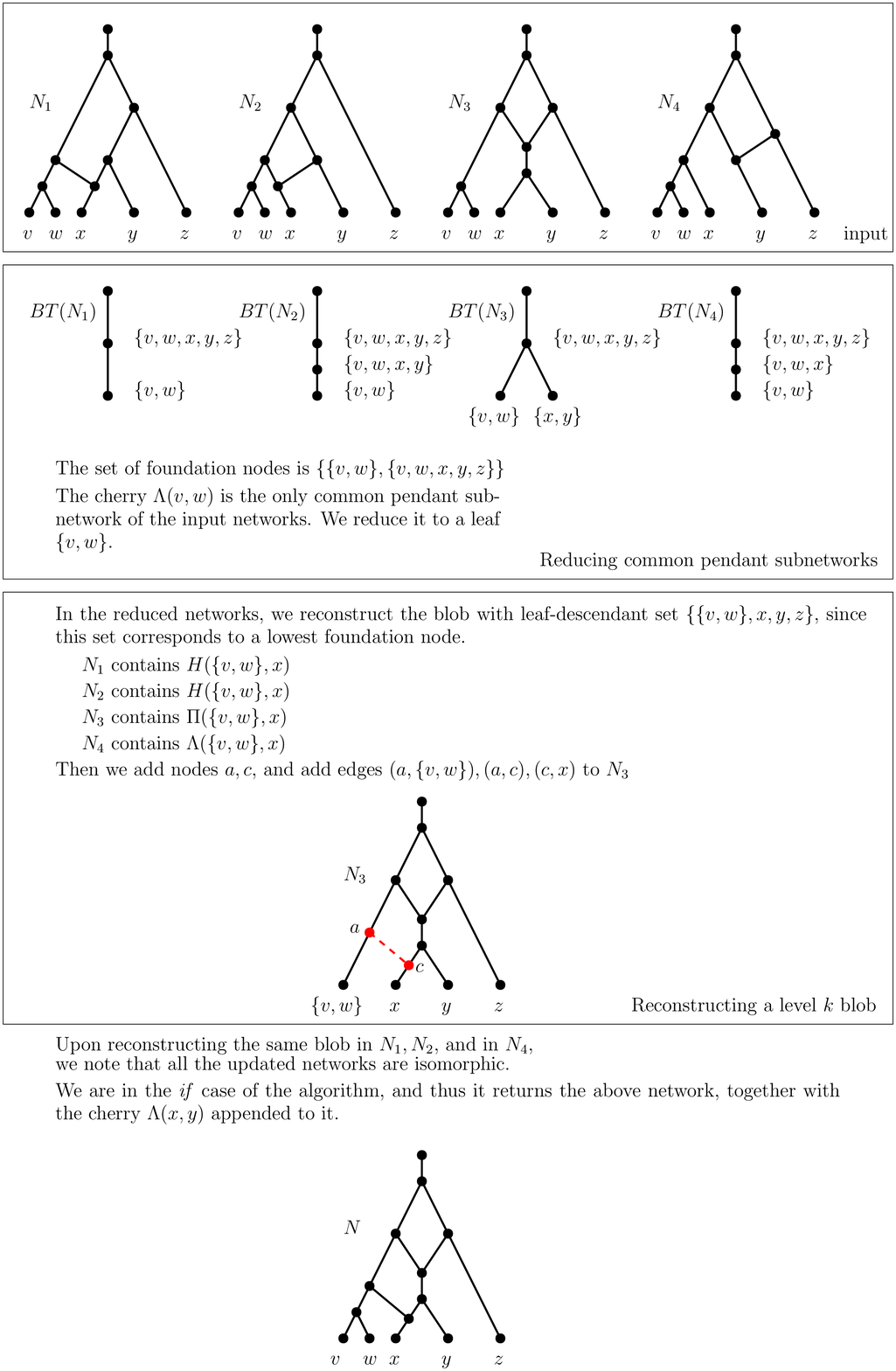}
 \caption{An example of the algorithm \textsc{TCMLLS-Reconstruction}($\{N_1,N_2, N_3, N_4\}$).
 \review{Initially, the common pendant subnetworks of the four input networks are determined by looking at their blob trees.
 In this case, this is the cherry~$\Lambda(v,w)$ (line~1).
 Upon reducing the cherry~$\Lambda(v,w)$ to a leaf~$\{v,w\}$ in all the MLLSs, the lowest foundation node is found to be~$\{\{v,w\},x,y,z\}$.
 We find a leaf pair~$\{\{v,w\},x\}$ specified in line~4 of the algorithm. 
 Since~$N_3$ contains~$\Pi(\{v,w\},x)$, we reconstruct this blob in~$N_3$, shown by the red dashed edge (line~5).
 After reconstructing the same blob in all the other networks~$N_1,N_2$ and~$N_4$, we see that the networks are all isomorphic (we enter the if statement of line~12).
 The algorithm then returns the network~$N$.}}
 \label{fig:Algorithm}
\end{figure}

\begin{theorem}\label{thm:alg}
 Let~$N$ be a level-$k$ tree-child network on~$X$ where~$k\geq2$, and let~$\mathcal{T} = \cN^{mlls}(N)$. 
 Algorithm~\ref{alg:downupTCReconstruction} finds the network~$N$
 in time~$O(|\mathcal{T}||X|^3/k)$.
\end{theorem}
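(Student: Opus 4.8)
The plan is to prove correctness and the running time separately. Correctness I would establish by induction on the number of level-$k$ blobs of $N$, running in parallel with the constructive argument behind Theorem~\ref{thm:TCNetworksReconstructible}; the time bound I would obtain by bounding the recursion depth and the cost of a single call.

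For correctness I would first appeal to Lemma~\ref{lem:PendantSubnetwork}: the maximal common pendant subnetworks collapsed in the first step are genuine pendant subnetworks of $N$, so collapsing them discards no information and they can be grafted back at the end. After this collapse, Observation~\ref{obs:NoPendantSubnetworkIFFAllLowestLvlK} guarantees that every lowest blob of the collapsed network has level exactly $k$, so the minimal node $A$ common to all blob trees is the leaf-descendant set of a lowest level-$k$ blob $B$. By Lemma~\ref{lem:blobseesretcherryshape}, $B$ contains a reticulated cherry shape formed by two nodes outside $B$; minimality of $A$ forces these nodes to be leaves $x,y$, so $N$ contains $H(x,y)$ with the containing blob $B_H=B$ of level $k$. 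The $H(x,y)$ row of Table~\ref{tab:allcases} (Theorem~\ref{thm:ShapeIdentifiability}) then guarantees that the three networks $N_1,N_2,N_3$ with shapes $\Lambda(x,y)$, $H(x,y)$, and one of $\lambda(x,y),\lambda(y,x),\Pi(x,y)$ exist, and Lemma~\ref{lem:HReconstruct} guarantees that inserting the edge $(a,c)$ in $N_3$ and then performing the $B$-part replacement in every MLLS reconstructs $B$ correctly everywhere. Collapsing the reconstructed pendant subnetwork $N_A$ to a single leaf $A$ then leaves, by the commutativity argument of Lemma~\ref{lem:ReconRedux}, exactly the set $\cN^{mlls}(N\setminus N_A)$; since $N\setminus N_A$ is a level-$k$ tree-child network with one fewer level-$k$ blob and fewer leaves, the induction hypothesis returns it from the recursive call, and the final grafting recovers $N$. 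The base case is a single-element $\mathcal{T}$: no level-$k$ blob remains and the unique surviving network is $N$ with all level-$k$ blobs collapsed.

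For the running time I would bound the recursion depth and the per-call cost. Each recursive call reconstructs and then collapses exactly one level-$k$ blob, so the number of calls equals the number of level-$k$ blobs of $N$. Each such blob contains exactly $k$ reticulations, and a tree-child network on $|X|$ leaves has $O(|X|)$ reticulations in total, so there are $O(|X|/k)$ level-$k$ blobs and hence $O(|X|/k)$ recursive calls. Within one call every network in $\mathcal{T}$ has $O(|X|)$ nodes, and the dominant operations --- computing all blob trees, intersecting their $O(|X|)$ node labels (each a subset of $X$) to find $A$, and computing the pairwise shapes needed to locate $\{x,y\}$ --- each cost $O(|X|^2)$ per network, giving $O(|\mathcal{T}||X|^2)$ per call; the blob reconstruction, replacement and collapsing steps are dominated by this. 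Multiplying $O(|\mathcal{T}||X|^2)$ by $O(|X|/k)$ yields the claimed bound $O(|\mathcal{T}||X|^3/k)$.

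I expect the main obstacle to be the bookkeeping that $\mathcal{T}$ remains exactly $\cN^{mlls}(N\setminus N_A)$ after each iteration: this requires that collapsing the reconstructed $B$-part commutes with the reticulation-edge deletions defining the MLLSs, which is the content of Lemma~\ref{lem:ReconRedux} but must here be applied to a blob collapse rather than a cherry reduction. On the complexity side the only subtle point is the factor $1/k$, which comes not from any per-operation speedup but purely from the counting observation that level-$k$ blobs are scarce, at most $(|X|-1)/k$ of them.
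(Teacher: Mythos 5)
Your proposal is correct and follows essentially the same route as the paper's proof: collapse maximal common pendant subnetworks (Lemma~\ref{lem:PendantSubnetwork}, Observation~\ref{obs:NoPendantSubnetworkIFFAllLowestLvlK}), locate a lowest level-$k$ blob via a minimal common blob-tree node, reconstruct it through the $H(x,y)$ shape and Lemma~\ref{lem:HReconstruct}, collapse and recurse, and bound the time as (number of level-$k$ blobs) $\times$ (per-call cost $O(|\mathcal{T}||X|^2)$). The only cosmetic difference is your count of level-$k$ blobs via the $O(|X|)$ bound on reticulations in a tree-child network, where the paper instead counts the at least $k+1$ outgoing cut-edges per level-$k$ blob; both give the $O(|X|/k)$ recursion depth.
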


\begin{proof}
 We first prove the correctness of the algorithm.
 By Lemma~\ref{lem:PendantSubnetwork}, every maximal common pendant subnetwork of~$\cal{T}$ is a pendant subnetwork of~$N$.
 Let~$N_A$ be a maximal common pendant subnetwork of~$\cal{T}$.
 Then we can collapse~$N_A$ from~$N$ and~$\cal{T}$, solve a smaller instance of the MLLS reconstruction problem by reconstructing~$N\setminus N_A$ from~$\mathcal{T}\setminus N_A$, and then appending~$N_A$ to the leaf labelled~$A$ -- which is the final step of the algorithm -- returns the network~$N$. 
 Since all maximal pendant subnetworks are disjoint from one another, we can repeat this reduction for all maximal pendant subnetworks, by considering the reductions separately.
 Let~$\cal{T'}$ and~$N'$ denote the set of networks and network obtained by collapsing these pendant subnetworks from~$\cal{T}$ and~$N$ respectively.
 By Observation~\ref{obs:NoPendantSubnetworkIFFAllLowestLvlK}, we have that all lowest blobs of~$N'$ are of level-$k$.
 We search for a lowest level-$k$ blob~$B$ by finding a minimal set~$A'$ that is a node of the blob tree of each network in~$\mathcal{T}$. Then~$A'$ is a lowest foundation node of~$N'$ by Theorem~\ref{thm:BTNodeLabelPreservationMLLS}.
 Then we search for a leaf pair~$\{x,y\}$ which form a reticulated cherry in~$N'$ with the reticulation in~$B$.
 We note that such a leaf pair exists since~$B$ is a lowest blob and since we have collapsed all common pendant subnetworks. Moreover, we can find it by searching for a pair of leaves as described in the algorithm by Table~\ref{tab:allcases} and its proof in Theorem~\ref{thm:ShapeIdentifiability}.
 Now we reconstruct~$B$ using the steps outlined in the proof of Lemma~\ref{lem:HReconstruct}.
 Let~$A'$ denote the leaf-descendant set of~$B$, and let~$N'_{A'}$ denote the corresponding pendant subnetwork (the subnetwork is pendant since~$B$ is a lowest blob).
 At this point, if we only needed to reconstruct one level-$k$ blob (i.e. the case when~$N'$ contains one level-$k$ blob), then we have reconstructed~$N'$.
 Otherwise, collapsing $N'_{A'}$
 gives the full set of MLLSs of the network~$N'\setminus N'_{A'}$.
 Continuing this reasoning, the recursive call will return the network~$N'$.  Appending the collapsed maximum common pendant subnetworks to~$N'$ returns the network~$N$.
 
 Each recursive call of {\sc TCMLLS-Reconstruction} reconstructs one level-$k$ blob, and therefore the algorithm terminates once we have reconstructed all level-$k$ blobs, in which case we have reconstructed the network.\\
 
 For the running time, observe that each recursive call of {\sc TCMLLS-Reconstruction} acts on an instance~$\cN^{mlls}(N')$ on leaf set~$X'$ such that~$|X'|<|X|$ and there is one fewer level-$k$ blob that needs to be reconstructed in the networks of~$\cN^{mlls}(N')$ when compared to that of~$\cN^{mlls}(N)$.
 Since every level-$k$ blob has at least~$k+1$ outgoing edges, the number of level-$k$ blobs in~$N$ is at most~$|X|/k$. 
 Then {\sc TCMLLS-Reconstruction} is called at most~$|X|/k$ times.
 
 Each single iteration of {\sc TCMLLS-Reconstruction} can be split into four parts - collapsing largest common pendant subnetworks from the networks \review{(lines~1-2)}, finding a lowest foundation node~$A$ (i.e., finding a lowest level-$k$ blob) \review{of~$N$ (line~3)}, reconstructing a lowest level-$k$ blob \review{(lines~4-11)}, and checking whether all updated networks are isomorphic to one another \review{(line~12)}.
 Finding a largest common pendant subnetwork can be done by looking at the blob trees, which can all be constructed in~$O(|\mathcal{T}||X|)$ time.
 By Observation~\ref{obs:PendantSubnetworkBT}, there exists a common pendant subnetwork rooted at the pure node with leaf-descendant set~$A$, if there exists a common pendant subtree rooted at~$A$ in the blob trees. 
 The number of blob nodes is maximized for a tree on~$|X|$ leaves (or whenever every reticulation is in a triangle), in which case it contains~$2|X|$ nodes in total (including the root).
 Then there exist at most~$|X|-1$ foundation nodes, and thus at most~$|X|-1$ pendant subnetworks.
 \review{Collapsing a pendant subnetwork from a network takes constant time, and}
 for every possible pendant subnetwork, we iterate through the networks in~$\cal{T}$.
 This step takes~$O(|\mathcal{T}||X|)$ time.
 Finding a lowest foundation node follows immediately as we have found all pendant subnetworks of the blob trees.
Then this step takes constant time.
 The level-$k$ blob reconstruction chooses a pair of leaves which descend from a blob  and subsequently searches through all networks in~$\cal{T}$ to see if the pair is the~$H$ shape we seek. This takes $O(|\mathcal{T}||X|^2)$ time if we try each pair of leaves (or~$O(|\mathcal{T}|^2|X|)$ time if we try each cherry of each network).
 \review{Collapsing pendant subnetworks take constant time, and we do this for every network in~$\cal{T}$. Therefore the running time for reconstructing a lowest level-$k$ blob still takes~$O(|\mathcal{T}||X|^2)$ time (or~$O(|\mathcal{T}|^2|X|)$ time).}
 To decide whether all networks have become isomorphic, we only need to check whether each blob tree consists of just two nodes (including the root).
 It is not necessary to check whether some networks have become isomorphic after each recursion, since the algorithm still works if the input set contains isomorphic networks.
 The algorithm terminates when all level-$k$ blobs have been reconstructed: this is precisely when all networks  become isomorphic.
 Since we can construct (or update) the blob trees in $O(|\mathcal{T}||X|)$ time, we can decide whether all networks have become isomorphic in~$O(|\mathcal{T}||X|)$ time.

 Thus the total time over a single iteration of {\sc TCMLLS-Reconstruction} is~$O(|\mathcal{T}||X|^2)$ (or $O(|\mathcal{T}|^2|X|)$).
 
 It follows that the total running time of the algorithm is~$O(|\mathcal{T}||X|^3/k)$ (or $O(|\mathcal{T}|^2|X|^2/k)$).
\end{proof}

Here we restrict the input data~$\cal{T}$ to be the full-set of MLLSs of some tree-child network~$N$, and return $N$.
We now show that it is not necessary to have this restriction: in fact, we require only three MLLSs to reconstruct~$N$.
That is, the same three MLLSs can be used to reconstruct each level-$k$ blob.
However, if we do not have all MLLSs, we are unable to identify the level-$k$ blobs. Therefore, we require here that also the number of reticulations in the network is given.

\begin{theorem}\label{thm:MinNumMLLS}
 Three MLLSs suffice to reconstruct a level-$k$ tree-child network, with~$k\geq2$, if the number~$l$ of level-$k$ blobs is known.
\end{theorem}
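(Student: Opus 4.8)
The plan is to exhibit three specific MLLSs of $N$ and show that, together with the value of $l$, they determine $N$. The three MLLSs are the natural refinements of the two used in Lemma~\ref{lem:MinNumberMLLStoReconstructBT}: let $N_1$ be the MLLS obtained by cutting a lowest reticulated cherry shape $\langle x,y\rangle$ in every level-$k$ blob, $N_2$ the MLLS obtained by isolating these same reticulated cherry shapes, and $N_3$ a third MLLS obtained by deleting, in each level-$k$ blob, a reticulation edge not incident to the reticulation $p_y$ of that blob's chosen cherry. First I would invoke Lemma~\ref{lem:MinNumberMLLStoReconstructBT} to recover $BT(N)$ from $N_1$ and $N_2$ alone, so that all foundation nodes and the tree structure between them are available before any blob is rebuilt.

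The point of the third network is shape disambiguation. For each chosen lowest reticulated cherry $\langle x,y\rangle$ (with the reticulation on $y$), Lemma~\ref{lem:HCutShapes} gives that $N_1$ contains one of $\lambda(x,y),\lambda(y,x),\Pi(x,y)$ and $N_2$ contains $\Lambda(x,y)$, while $N_3$ is constructed so as to contain $H(x,y)$; the latter is possible because, as in the $H$-case of the proof of Theorem~\ref{thm:OriginalContainsThenMLLSContains}, one of the reticulation edges not incident to $p_y$ can always be deleted without destroying the $H$ shape. Since the deleted edges lie in distinct blobs, a single $N_3$ realises $H(x_i,y_i)$ simultaneously for every level-$k$ blob $B_i$. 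Observing all three shapes $\Lambda(x,y)$, $H(x,y)$ and one of $\lambda/\Pi$ on the same pair certifies, via Theorem~\ref{thm:ShapeIdentifiability}, that $N$ contains $H(x,y)$ with its reticulation in a level-$k$ blob, which is exactly the hypothesis of Lemma~\ref{lem:HReconstruct}. The third shape is essential here: without the $H$-witness, the pair $(\Lambda,\lambda)$ alone could also arise from a plain $\lambda(x,y)$, so two MLLSs cannot separate the two cases.

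I would then argue by induction on $l$, closely following Theorem~\ref{thm:TCNetworksReconstructible} but with the three fixed networks playing the role of the whole set $\cM$. After collapsing the maximal pendant subnetworks common to $N_1,N_2,N_3$ (these are precisely the level-$<k$ parts, since every level-$k$ blob is altered differently in the three networks), a lowest foundation node of $BT(N)$ corresponds to a lowest level-$k$ blob $B$, and its lowest reticulated cherry is now a leaf pair carrying the signature above. I reconstruct $B$ in each of $N_1,N_2,N_3$ by Lemma~\ref{lem:HReconstruct} (using $N_1$ as the $\lambda/\Pi$-witness), reduce the cherry $\{x,y\}$ in $N$ and in all three networks, and invoke Lemma~\ref{lem:ReconRedux} to see that the reduced $N_1,N_2,N_3$ are again the cut, isolate and $H$-preserving MLLSs of the reduced level-$k$ tree-child network $N'$, which has $l-1$ level-$k$ blobs. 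The induction hypothesis then reconstructs $N'$, and undoing the reduction yields $N$; the base case $l=1$ is handled directly by Lemma~\ref{lem:MinNumberMStoReconstructBT} and Lemma~\ref{lem:HReconstruct}.

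The place where the extra input $l$ is unavoidable, and the main obstacle of the argument, is the identification of the level-$k$ blobs. The clean characterisation in Lemma~\ref{lem:blobgraphnew12} relies on inspecting every MLLS, which is no longer available; with only three networks one cannot certify that a blob is level-$<k$, since some unseen MLLS might still change its children. Knowing $l$ resolves this: I reconstruct level-$k$ blobs one at a time from the bottom up and stop after exactly $l$ of them, so the count $l$ supplies precisely the information that the full MLLS set would otherwise provide. The remaining routine points, namely that cutting, isolating and the $H$-preserving deletion really leave all other blobs untouched and that the per-blob choices for $N_3$ never interfere, follow from the fact that a reticulation edge deletion is confined to its own blob.
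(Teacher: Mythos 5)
Your constructive half matches the paper's proof almost exactly: the paper also takes the three MLLSs obtained by cutting, isolating, and performing an $H$-preserving deletion at a chosen lowest reticulated cherry shape of every level-$k$ blob, and also reconstructs blobs bottom-up (via Algorithm~\ref{alg:downupTCReconstruction} rather than your explicit induction on $l$, but these are the same argument). The genuine gap is that you never prove uniqueness. The theorem requires more than a procedure that outputs $N$ when it is fed data that you \emph{already know} to be the designated cut/isolate/$H$-preserving triple of $N$: it requires that no other level-$k$ tree-child network $N^*$ with $l$ level-$k$ blobs is consistent with the same data. Your induction only establishes a correct execution under the assumption that the input triple arose from $N$ in the prescribed roles; if some $N^*\not\cong N$ displayed $N^{mlls}_1,N^{mlls}_2,N^{mlls}_3$ as (possibly differently obtained) subnetworks, nothing in your argument would force $N^*\cong N$, and then the three networks together with $l$ would not determine the network at all, so ``reconstruction'' would fail in principle. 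The paper devotes a separate paragraph to exactly this: supposing such an $N^*$ exists, its full MLLS set contains the three networks; running the algorithm on $\cN^{mlls}(N^*)$ makes the same collapses and blob reconstructions as the run on the three networks (this is where the hypothesis that $N^*$ also has exactly $l$ level-$k$ blobs is used a second time), so that run returns $N$, while Theorem~\ref{thm:alg} says it returns $N^*$; hence $N^*\cong N$.

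A related flaw in your write-up is the claim that observing $\Lambda(x,y)$, $H(x,y)$ and one of $\lambda/\Pi$ in the three networks ``certifies, via Theorem~\ref{thm:ShapeIdentifiability}'', that $N$ contains $H(x,y)$ with its reticulation in a level-$k$ blob. The characterisations in that theorem are if-and-only-if statements quantified over the \emph{full} MLLS set (for instance, the $H$-case requires that \emph{no} MLLS contains $K(y,x)$), so three networks cannot certify anything about an unknown network; a leaf pair could exhibit the $(\Lambda,H,\lambda/\Pi)$ signature spuriously. In the constructive direction this is harmless, since you know by construction which pair is correct, but it means a data-driven procedure cannot be proved correct from the shape signature alone --- it is precisely the missing uniqueness argument that absorbs this issue. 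You correctly noticed that Lemma~\ref{lem:blobgraphnew12} becomes unavailable with only three networks and that knowing $l$ compensates for it; the same caveat applies to Theorem~\ref{thm:ShapeIdentifiability}, and your proposal does not account for it.
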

\begin{proof}
 Let~$N$ be a level-$k$ tree-child network with~$l$ level-$k$ blobs and~$k\geq 2$. We first pick three MLLSs $N^{mlls}_1,N^{mlls}_2$, and~$N^{mlls}_3$ of~$N$ and then show that Algorithm~\ref{alg:downupTCReconstruction} returns~$N$ with these inputs and that no other tree-child network exists with these three MLLSs and~$l$ level-$k$ blobs.
 
 Let~$B_1,\dots,B_l$ denote the level-$k$ blobs in~$N$, and let, for~$i = 1,\dots,l$, $r_i$ denote a reticulation in~$B_i$ that is in a lowest reticulated cherry shape, which exists by
 Lemma~\ref{lem:blobseesretcherryshape}.
 Since~$B_i$ is a level-$k$ blob where~$k\geq2$, the parents of~$r_i$ must be non-adjacent (otherwise~$B_i$ would be a level-1 blob).
 Let~$N^{mlls}_1$ denote the MLLS of~$N$ obtained by cutting the reticulated cherry shapes that contain~$r_i$, and let~$N^{mlls}_2$ denote the MLLS of~$N$ obtained by isolating the reticulated cherry shapes that contain~$r_i$.
 Let~$N^{mlls}_3$ denote the MLLS of~$N$ obtained by deleting, from each level-$k$ blob~$B_i$, a reticulation edge that is not incident to~$r_i$, and such that the parents of~$r_i$ remain non-adjacent 
in~$N^{mlls}_3$.
 To see that such an MLLS exists, recall that by Theorem~\ref{thm:OriginalContainsThenMLLSContains}, we have that if a network contains~$H(x,y)$, then there is an MLLS of~$N$ that contains~$H(x,y)$.
 Therefore, by treating each level-$k$ blob~$B_i$ as a level-$k$ tree-child network, we may invoke Theorem~\ref{thm:OriginalContainsThenMLLSContains} to claim that such an MLLS~$N^{mlls}_3$ exists.
 
 Now we show that these three MLLSs suffice to reconstruct~$N$ with Algorithm~\ref{alg:downupTCReconstruction}.
 Recall that Algorithm~\ref{alg:downupTCReconstruction} initially collapses all maximal common pendant subnetworks from the input.
 Let~$N'$ denote the network obtained by collapsing the same pendant subnetworks from~$N$. Then, the algorithm finds a minimal set~$A$ that is a node of the blob tree of each of $N^{mlls}_1, N^{mlls}_2, N^{mlls}_3$. 
 By Lemma~\ref{lem:MinNumberMLLStoReconstructBT},~$A$ is a leaf of the blob tree of~$N'$. 
Since the MLLSs $N^{mlls}_1, N^{mlls}_2, N^{mlls}_3$ were constructed in such a way that all common pendant subnetworks are of level strictly lower than~$k$, the set~$A$ is the set of leaf-descendants of some lowest level-$k$ blob of~$N'$. Note that, in~$N'$,~$r_1$ is contained in a reticulated cherry~$H(x,y)$ for some leaves~$x,y$, since there are no blobs below~$B_1$.
 By construction, the MLLS~$N^{mlls}_1$ contains one of~$\lambda(x,y), \lambda(y,x)$, or~$\Pi(x,y)$.
 The MLLS~$N^{mlls}_2$ contains~$\Lambda(x,y)$, and the MLLS~$N^{mlls}_3$ contains~$H(x,y)$.
 Hence, we can argue similarly to in the proof of Theorem~\ref{thm:alg} that the algorithm then reconstructs the blob~$B_1$ in all input MLLSs, and that recursing the algorithm reconstructs the next lowest level-$k$ blob (which can be reconstructed analogously as done for~$B_1$).
 It follows then that the algorithm reconstructs~$N$ after~$l$ recursions of the algorithm.
 
 We now show that no other tree-child level-$k$ network with~$l$ level-$k$ blobs exists that displays $N^{mlls}_1, N^{mlls}_2$ and $N^{mlls}_3$. Suppose such a network~$N^*$ exists. Then its MLLS set contains $N^{mlls}_1, N^{mlls}_2$ and $N^{mlls}_3$. Hence, by the arguments above, running Algorithm~\ref{alg:downupTCReconstruction} on the full set~$\cN^{mlls}(N^*)$ of MLLSs of~$N^*$ returns~$N$. In particular, note that since~$N^*$ has the same number of level-$k$ blobs as~$N$, the same common pendant subnetworks are collapsed (in each recursive call) when running the algorithm on~$\cN^{mlls}(N^*)$ as when we run the algorithm on $N^{mlls}_1, N^{mlls}_2$ and $N^{mlls}_3$. By Theorem~\ref{thm:alg}, running the algorithm on $\cN^{mlls}(N^*)$ returns~$N^*$. Hence,~$N^*$ and~$N$ are isomorphic.
\end{proof}

Note that in the proof of Theorem~\ref{thm:MinNumMLLS}, we crafted three particular MLLSs~$N^{mlls}_1, N^{mlls}_2,$ and~$N^{mlls}_3$ however, there could be many triples of MLLSs which are sufficient in reconstructing the original network.
Suppose that we have deleted the reticulation edge~$e^j_i$ from the blob~$B_i$ to obtain the MLLS~$N^{mlls}_j$ for~$j=1,2,3$.
The proof of Theorem~\ref{thm:MinNumMLLS} depends on the argument that if for every level-$k$ blob, we can find the particular three shapes, then we can reconstruct said level-$k$ blob.
Then we can define three new MLLSs that are also sufficient for reconstructing~$N$ as follows.
Let~$N_1$ be the MLLS obtained by deleting one of the three reticulation edges ($e^j_1, e^j_2, e^j_3$) from each level-$k$ blob ($B_i$).
Let~$N_2$ be the MLLS obtained by deleting one of the remaining two reticulation edges, and let~$N_3$ denote the MLLS obtained by deleting the remaining reticulation edges from~$N$.
If there were~$l$ level-$k$ blobs in~$N$, then we would have~$3l$ possible choices of~$N_1$,~$2l$ possible choices of~$N_2$, and~1 possible choice of~$N_3$.
Therefore we have~$6l^2$ possible choices for having a triple of MLLSs that are sufficient for reconstructing the network, given the reticulation edges~$e^j_i$.
Note that we simply looked at a particular lowest reticulation~$r_i$ for each level-$k$ blob~$B_i$, and also note that there could be more than one reticulation edge that we could have deleted in retrieving the MLLS~$N^{mlls}_3$ (in the proof of Theorem~\ref{thm:MinNumMLLS}).
This implies that there could be many more triples of MLLSs that suffice to reconstruct~$N$. 



Therefore, it is possible to reconstruct the network from a subset of the MLLSs, given that they hold enough information.
In particular, if we have the three MLLSs as stated in the proof of Theorem~\ref{thm:MinNumMLLS}, then our algorithm returns the network in time~$O(|X|^2/k)$. 

\section{Conclusion and Outlook}\label{sec:Conclusion}

In this paper, we have shown that level-$k$ tree-child networks, where~$k\geq2$, are determined by their MLLSs and provided a polynomial-time algorithm for reconstructing such a network from its MLLSs.
We achieved this result by exploiting one of the tree-child properties - the lowest tree node is in a cherry or a reticulated cherry.

An apparent hindrance to our method is that there is no guarantee nor reason to have the set of all MLLSs of the original network.
Converting sequence data into the MLLSs could be quite challenging, especially for higher level.
It would therefore be interesting to focus on ways to make our results more practical, possibly employing similar approaches used in methods such as trinet-based methods~\cite{oldman2016trilonet}, which work with subnetworks with only three leaves.

On the positive side, we have shown that it is not necessary to know all MLLSs to reconstruct the original network: we need only three, see Theorem~\ref{thm:MinNumMLLS}.
Therefore, a more plausible application of our approach would be the following. Suppose, for example, that different studies each manage to produce a network with some reticulations. However, in each of these networks some actual reticulate events have been missed, possibly due to computational limitations or lack of data. Then, a method based on our theoretical results could be used to reconstruct the full network from the networks with missing reticulate events.



Extending our MLLS reconstructibility results to a more general class of networks is another natural step forward.
We briefly discuss and explain how it might be possible to adapt our results to the class of \emph{valid} networks, where every reticulation edge in the network is valid.


In the case of valid networks, it is not always true that there exists a reticulation edge in every blob \review{whose} deletion results in a maximum subnetwork where the blob tree differs from the blob tree of the original network (i.e., an analogous statement to Observation~\ref{obs:DiffBlobTreeAfterDeletion} does not always hold for valid networks).
An example of this, a tessellating crown, is shown in Figure~\ref{fig:TesselatingCrown}.
\begin{figure}[h!]
 \centering
 \includegraphics[scale=0.5]{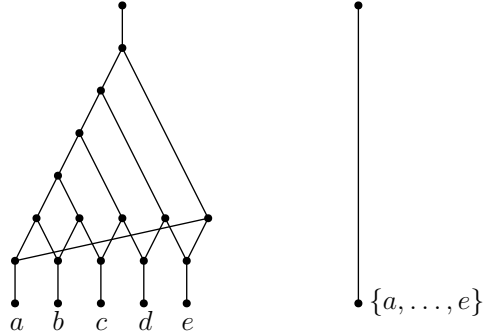}
 \caption{A valid network with a tessellating crown blob and its blob tree.
 Deleting any of the reticulation edges keeps the original blob biconnected, and hence it will not affect the blob tree.}
 \label{fig:TesselatingCrown}
\end{figure}

For leaf pair analysis, there is an extra shape on two leaves~$\{x,y\}$ where both parents~$p_x,p_y$ of~$x,y$ respectively are reticulations, and they share a common parent~$g_x$.
This shape, called a \emph{\review{$2$-reticulated cherry}} (see leaves~$b,c$ of Figure~\ref{fig:TesselatingCrown}, \review{defined in~\cite{Bordewich2018}}), is distinct from all others (given we adapt the definition of when~$N$ contains~$\Pi(x,y)$) as it contains~$K(x,y)$ and~$K(y,x)$ in its MLLSs respectively.
Unlike tree-child networks, MLLSs of valid networks can contain a cherry, reticulated cherry, or a \review{$2$-reticulated cherry} stemming from the lowest tree node.
The reconstruction of a \review{$2$-reticulated cherry} poses a challenge as there are two potential places to reinsert the reticulation edge.
That is, given an MLLS where~$(g_x,p_x)$ has been deleted, we add two nodes~$a,b$ with edge~$(a,b)$. 
We know that~$b$ must be placed directly above~$x$. However, we have two possibilities for inserting~$a$ above~$p_y$ (illustrated in Figure~\ref{fig:DoubleReticulatedCherry}).

\begin{figure}[h]
 \centering
 \includegraphics[scale=0.45]{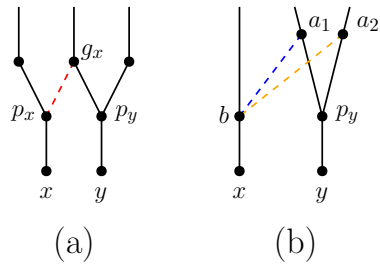}
 \caption{(a) \review{$2$-reticulated cherry} on~$\{x,y\}$.
 (b) MLLS where the red dashed edge is deleted from (a).
 We have two options,~$a_1,a_2$, for inserting the reticulation edge.}
 \label{fig:DoubleReticulatedCherry}
\end{figure}

Nevertheless, we conjecture the following:

\begin{conjecture}
 The class of binary valid networks is MLLS-reconstructible.
\end{conjecture}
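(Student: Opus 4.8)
The plan is to run the same bottom-up induction as in Theorem~\ref{thm:TCNetworksReconstructible}: reconstruct the blob tree, peel off a lowest blob, reconstruct it inside every MLLS, collapse the corresponding pendant subnetwork to a single leaf, and recurse. The scaffolding transfers for free, since Theorem~\ref{thm:BloBTreeReconstruction} is already stated for every valid network: from $\cN^{mlls}(N)$ we recover $BT(N)$, and hence the foundation nodes and the nesting of the blobs, exactly as before. What must be rebuilt are the leaf-pair analysis of Section~\ref{sec:LeafPairAnalysis} and the single-blob reconstruction of Lemma~\ref{lem:HReconstruct}, each of which acquires a new case.

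First I would add the \emph{$2$-reticulated cherry} as a ninth shape to Lemma~\ref{lem:PossibleShapes} and reprove the identifiability statement of Theorem~\ref{thm:ShapeIdentifiability}, extending Table~\ref{tab:allcases} by the corresponding row and column. The distance bookkeeping behind Theorem~\ref{thm:OriginalContainsThenMLLSContains} and Lemma~\ref{lem:IfPithenNoCherryInMLLS} should survive unchanged, because validity still guarantees that each reticulation edge deletion removes precisely its two endpoints. The genuinely new point is to isolate a signature for the $2$-reticulated cherry: it is the only shape both of whose reticulations can be deleted validly, so it is the unique configuration whose MLLSs realize both $K(x,y)$ and $K(y,x)$ on $\{x,y\}$; verifying that this separates it from every other row of the enlarged table is the bulk of the new casework. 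Crucially, I would continue to read off \emph{which blob was edited} from the variation of the leaf-pair shape across the MLLSs (as the bracketed/unbracketed entries of Table~\ref{tab:allcases} already do), rather than from a change in the blob tree. This is what lets me bypass the failure of Observation~\ref{obs:DiffBlobTreeAfterDeletion}: a tessellating crown (Figure~\ref{fig:TesselatingCrown}) stays biconnected under every valid deletion and so leaves $BT(N)$ untouched, but deleting an internal edge still perturbs the up-down distances of leaf pairs descending through it, so the edit remains visible at the finer, leaf-pair level.

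Next I would adapt Lemma~\ref{lem:HReconstruct} to reconstruct a lowest blob, and adapt Lemma~\ref{lem:ReconRedux} so that collapsing the reconstructed pendant subnetwork to a leaf in each MLLS produces exactly the MLLS set of the collapsed valid network $N'$; the induction hypothesis then reconstructs $N'$ and reattaching the blob returns $N$. Reticulated cherries are reinserted exactly as before, but the $2$-reticulated cherry carries the reconstruction ambiguity of Figure~\ref{fig:DoubleReticulatedCherry}: in the MLLS where $(g_x,p_x)$ was deleted there are two candidate insertion points $a_1,a_2$ for the restored edge. I would resolve this by cross-referencing the complementary MLLS that preserves $p_x$ and deletes the other reticulation edge instead: that MLLS fixes the position of $g_x$ relative to $p_y$, which selects the correct insertion point among $a_1,a_2$.

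The step I expect to be hardest is the full reconstruction of a lowest blob that is not assembled out of disjoint reticulated cherries, the crown being the model obstruction. The tree-child argument reconstructs a single reticulated cherry and then reduces, but in a crown the reticulations are mutually entangled: no single descendant leaf pair pins down the whole internal topology, and reinserting one restored edge at a time need not produce a consistent biconnected blob. I would therefore have to prove a global consistency lemma showing that the family of reticulated-cherry and $2$-reticulated-cherry signatures collected over \emph{all} leaf pairs descending from the blob, read across the \emph{entire} MLLS set, determines the biconnected arrangement of its reticulations uniquely --- an argument in the spirit of the $\cB$-and-apex bookkeeping used for the $\Pi$-shape in Theorem~\ref{thm:OriginalContainsThenMLLSContains}, but now carried out simultaneously for every reticulation of the blob. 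Supplying this lemma, together with an analogue of Lemma~\ref{lem:BordewichLemma} guaranteeing that a lowest blob always exposes at least one reconstructible cherry-type shape to seed the process, is where the real work of the conjecture lies.
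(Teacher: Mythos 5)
You have not proved the statement --- and neither does the paper: this is an open conjecture in Section~\ref{sec:Conclusion}, and your text is a research plan whose decisive steps are deferred rather than carried out. You yourself flag the two places where the actual content would have to go: a ``global consistency lemma'' showing that the leaf-pair signatures collected over all MLLSs uniquely determine the internal topology of an entangled blob such as the tessellating crown of Figure~\ref{fig:TesselatingCrown}, and an analogue of Lemma~\ref{lem:BordewichLemma} guaranteeing that a lowest blob of a valid network exposes a cherry, reticulated cherry, or $2$-reticulated cherry. These are precisely the obstructions for which the paper offers no resolution (which is why the statement is a conjecture and not a theorem), so acknowledging them does not discharge them. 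Likewise, your resolution of the ambiguity of Figure~\ref{fig:DoubleReticulatedCherry} by ``cross-referencing the complementary MLLS'' is an assertion, not an argument: to make it work you must show that the relevant MLLS can be identified within the unordered set $\cN^{mlls}(N)$, that nodes can be matched across MLLSs, and above all that no \emph{other} valid network realizes the same MLLS set with the alternative insertion point $a_2$ --- uniqueness is the whole problem, and it is untouched.

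There is also a concrete false step: the claim that ``the distance bookkeeping behind Theorem~\ref{thm:OriginalContainsThenMLLSContains} and Lemma~\ref{lem:IfPithenNoCherryInMLLS} should survive unchanged, because validity still guarantees that each reticulation edge deletion removes precisely its two endpoints.'' Those proofs do not rest on validity alone; they invoke the tree-child property repeatedly and essentially. For instance, the $\Pi$-case of Theorem~\ref{thm:OriginalContainsThenMLLSContains} argues that the edge $(w,z)$ ``cannot be a reticulation edge because $N$ is tree-child,'' that the shortest $t_Bh_B$ up-down distance is at least~$3$ ``because of the tree-child property,'' and Lemma~\ref{lem:blobseesretcherryshape} (used throughout) needs every reticulation to have a tree-node parent and a non-reticulation child. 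In a valid network a reticulation may have reticulation parents and reticulation children --- the crown is exactly such an example --- so these arguments break, and the shape-preservation and identifiability machinery of Section~\ref{sec:LeafPairAnalysis} must be reproved from scratch for the enlarged shape set, not merely ``extended by a row and a column'' of Table~\ref{tab:allcases}. Until that casework, the $2$-reticulated-cherry uniqueness argument, and the global consistency lemma are actually supplied, the conjecture remains open.
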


Note here that invalid networks, where not every reticulation edge is valid, are not level-reconstructible in general.
A counter example is given in Figure~\ref{fig:InvalidLvl2}.

\begin{figure}[h]
 \centering
 \includegraphics[width = \textwidth]{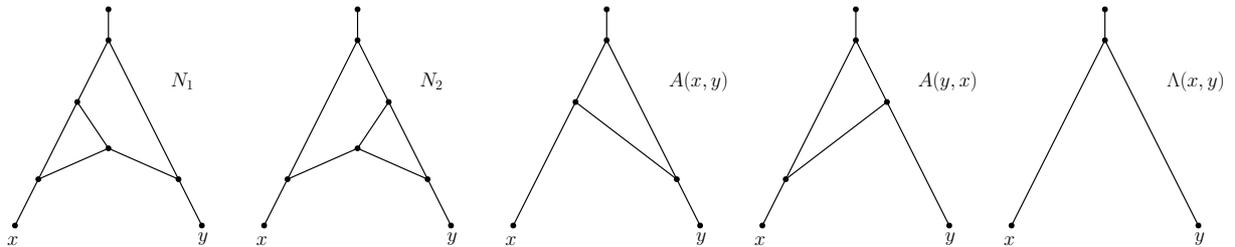}
 \caption{Two level-2 networks~$N_1$ and~$N_2$ are non-isomorphic but have the same lower-level subnetworks.
 In general, invalid networks are not level-reconstructible.}
 \label{fig:InvalidLvl2}
\end{figure}

Ultimately we wish to
characterize precisely which networks are reconstructible from their MLLSs.
Though this could perhaps be possible by an analogous leaf pair analysis as done in Section~\ref{sec:LeafPairAnalysis}, we quickly reach a large number of cases, with level-$2$ networks already containing~15 possible shapes.
A more efficient methodology will be required to treat such general networks.
Considering the level-$k$ generators~\cite{van2009constructing} may perhaps provide an interesting approach to this problem.

\begin{acknowledgements}
We thank the anonymous referee for their helpful comments in improving this manuscript.
Yukihiro Murakami, Leo van Iersel, Remie Janssen, and Mark Jones were supported in part by the Netherlands Organization for Scientific Research (NWO), including Vidi grant 639.072.602, and Leo van Iersel also partly by the 4TU Applied Mathematics Institute.
Vincent Moulton thanks the Netherlands Organization for Scientific Research (NWO) Vidi grant 639.072.602, for its support to visit TU Delft.
\end{acknowledgements}

\bibliographystyle{spmpsci}      
\bibliography{z_bibliography}   


\end{document}